\documentclass[10pt, a4paper, fleqn]{article}

\title{Fully Discrete Positivity-Preserving and Energy-Dissipating Schemes for Aggregation-Diffusion Equations with a Gradient-Flow Structure}

\usepackage{authblk}

\author[1]{Rafael Bailo}
\author[1]{Jos\'{e} A. Carrillo}
\author[2]{Jingwei Hu}

\affil[1]{
	Department of Mathematics, Imperial College London
}
\affil[ ]{
	180 Queen's Gate, SW7 2AZ London, United Kingdom
}
\affil[ ]{\textit{
		r.bailo@imperial.ac.uk, carrillo@imperial.ac.uk
	}}
\affil[ ]{}
\affil[2]{
	Department of Mathematics, Purdue University
}
\affil[ ]{
	West Lafayette, IN 47907, United States of America
}
\affil[ ]{\textit{jingweihu@purdue.edu}}

\usepackage{geometry}

\usepackage[utf8x]{inputenc}
\usepackage[T1]{fontenc}
\usepackage{amsmath}
\usepackage{amsthm}
\usepackage{amsfonts}
\usepackage{amssymb}
\usepackage{graphicx}
\usepackage{mathtools}
\usepackage{ifthen}

\usepackage[UKenglish]{babel}

\usepackage[all]{nowidow}

\newcommand{\subjectclassification}[1]{

	{\small\textbf{\textit{AMS Subject Classification --- }} #1}

}
\newcommand{\keywords}[1]{

	{\small\textbf{\textit{Keywords --- }} #1}

}

\renewcommand\lll\MoveEqLeft

\usepackage{tikz}
\usetikzlibrary{patterns}
\usetikzlibrary{decorations.pathreplacing}
\usetikzlibrary{calc}

\usepackage{float}
\usepackage{subcaption}
\usepackage[section]{placeins}

\usepackage{array}
\newcolumntype{L}[1]{>{\raggedright\let\newline\\\arraybackslash\hspace{0pt}}m{#1}}
\newcolumntype{C}[1]{>{\centering\let\newline\\\arraybackslash\hspace{0pt}}m{#1}}
\newcolumntype{R}[1]{>{\raggedleft\let\newline\\\arraybackslash\hspace{0pt}}m{#1}}
\usepackage{booktabs}
\usepackage{tabularx}
\usepackage{multirow}

\usepackage[hypertexnames=false]{hyperref}
\hypersetup{
	colorlinks=true,
	linkcolor=blue,
	citecolor=blue,
	urlcolor=blue,
	linktocpage
}

\usepackage[capitalise]{cleveref}

 \usepackage{accents}
\newcommand{\shiftleft}{\hspace{-1pt}}

\newcommand\term\emph

\makeatletter
\let\newtitle\@title
\let\newauthor\@author
\let\newdate\@date
\makeatother
\usepackage{fancyhdr}
\pagestyle{fancy}
\lhead{}
\chead{}
\rhead{\leftmark}
\lfoot{}
\cfoot{\thepage}
\rfoot{}

\numberwithin{equation}{section}

\usepackage{enumerate}

\makeatletter
\def\@maketitle{\newpage
	\begin{center}\let \footnote \thanks
		{\LARGE\bfseries \@title \par}\vskip 2.5em{\large
				\lineskip .5em\begin{tabular}[t]{c}\@author
				\end{tabular}\par}\vskip 1em{\large \@date}\end{center}\par
	\vskip 1.5em}
\makeatother 

\usepackage{color}

\newcounter{review}

\newcommand{\ntcreview}[3]{\refstepcounter{review}

	{\color{#2}{\textbf{[#1]}: #3}}}

\newcommand{\creview}[3]{\ntcreview{#1}{#2}{#3}
	\addcontentsline{tor}{subsection}{\thereview~\textbf{[#1]}:~#3
	}}

\newcommand{\review}[2]{\creview{#1}{blue}{#2}}

\makeatletter

\newcommand\listreviewname{List of Reviews}
\newcommand\listofreviews{\section*{\listreviewname}\@starttoc{tor}}
\makeatother

\usepackage{amsthm}
\theoremstyle{plain}
\newtheorem{theorem}{Theorem}[section]

\newtheorem{proposition}[theorem]{Proposition}

\theoremstyle{remark}
\newtheorem{remark}[theorem]{\bf Remark}
\newtheorem{definition}[theorem]{\bf Definition}

\usepackage{esint}

\def\XXint#1#2#3{{\setbox0=\hbox{$#1{#2#3}{\int}$ }
			\vcenter{\hbox{$#2#3$ }}\kern-.6\wd0}}

\DeclarePairedDelimiter{\prt}{(}{)}
\DeclarePairedDelimiter{\brk}{[}{]}

\DeclarePairedDelimiter{\abs}{|}{|}
\DeclarePairedDelimiter{\norm}{\|}{\|}
\DeclarePairedDelimiter{\set}{\{}{\}}

\DeclarePairedDelimiter{\inn}{\langle}{\rangle}

\usepackage{suffix}

\newcommand{\inner}[2]{\inn{#1,#2}}
\WithSuffix\newcommand\inner*[2]{\inn*{#1,#2}}

\DeclarePairedDelimiter{\positive}{(}{)^{+}}
\DeclarePairedDelimiter{\negative}{(}{)^{-}}

\newcommand\pos\positive
\renewcommand\neg\negative

\WithSuffix\newcommand\pos*{\positive*}
\WithSuffix\newcommand\neg*{\negative*}

\newcommand{\Rd}{{\mathbb{R}^d}}

\renewcommand{\L}[1]{{L^{#1}}}

\newcommand{\pnorm}[2]{\norm{#2}_{\L{#1}}}
\WithSuffix\newcommand\pnorm*[2]{\norm*{#2}_{\L{#1}}}

\newcommand{\psnorm}[3]{\norm{#3}_{\L{#1}(#2)}}
\WithSuffix\newcommand\psnorm*[3]{\norm*{#3}_{\L{#1}(#2)}}

\newcommand{\pnormp}[2]{\pnorm{#1}{#2}^{#1}}
\WithSuffix\newcommand\pnormp*[2]{\pnorm*{#1}{#2}^{#1}}

\newcommand{\psnormp}[3]{\psnorm{#1}{#2}{#3}^{#1}}
\WithSuffix\newcommand\psnormp*[3]{\psnorm*{#1}{#2}{#3}^{#1}}

\renewcommand{\vec}{\mathbf}
\newcommand{\bx}{\vec{x}}
\newcommand{\by}{\vec{y}}

\newcommand{\conv}{\ast}

\renewcommand{\d}{\mathrm{d}}
\newcommand{\dd}{\mathop{}\!\d}

\newcommand{\grad}{\nabla}
\renewcommand{\div}{\nabla\cdot}

\newcommand{\pt}{\partial_t}

\newcommand{\tini}{t_{\mathrm{initial}}}
\newcommand{\tfin}{t_{\mathrm{final}}}

\newcommand{\Dt}{\Delta t}
\newcommand{\Dx}{\Delta x}
\newcommand{\Dy}{\Delta y}

\newcommand{\nhalf}{1/2}

\renewcommand{\i}{_{i}}
\newcommand{\ip}{_{i+1}}

\newcommand{\ih}{_{i+\nhalf}}
\newcommand{\imh}{_{i-\nhalf}}

\renewcommand{\j}{_{j}}

\newcommand{\jh}{_{j+\nhalf}}
\newcommand{\jmh}{_{j-\nhalf}}

\renewcommand{\k}{_{k}}

\renewcommand{\l}{_{l}}

\renewcommand{\ij}{_{i,\,j}}
\newcommand{\ipj}{_{i+1,\,j}}
\newcommand{\imj}{_{i-1,\,j}}
\newcommand{\ihj}{_{i+\nhalf,\,j}}
\newcommand{\imhj}{_{i-\nhalf,\,j}}

\newcommand{\ijp}{_{i,\,j+1}}
\newcommand{\ijm}{_{i,\,j-1}}
\newcommand{\ijh}{_{i,\,j+\nhalf}}
\newcommand{\ijmh}{_{i,\,j-\nhalf}}

\newcommand{\ipjp}{_{i+1,\,j+1}}
\newcommand{\ipjm}{_{i+1,\,j-1}}
\newcommand{\imjp}{_{i-1,\,j+1}}
\newcommand{\imjm}{_{i-1,\,j-1}}

\newcommand{\ir}{_{i,\,r}}
\newcommand{\ipr}{_{i+1,\,r}}
\newcommand{\ihr}{_{i+\nhalf,\,r}}
\newcommand{\imhr}{_{i-\nhalf,\,r}}

\newcommand{\rj}{_{r,\,j}}
\newcommand{\rjp}{_{r,\,j+1}}
\newcommand{\rjh}{_{r,\,j+\nhalf}}

\newcommand{\kl}{_{k,\,l}}
\newcommand{\kr}{_{k,\,r}}

\newcommand{\n}{^{n}}
\newcommand{\np}{^{n+1}}

\newcommand{\nss}{^{**}}
\newcommand{\nsss}{^{***}}

\newcommand{\ppr}{(r)}
\newcommand{\pprm}{(r-1)}
\newcommand{\ppi}{(i)}
\newcommand{\ppj}{(j)}
\newcommand{\ppzero}{(0)}
\newcommand{\pptwoM}{(2M)}

\newcommand{\nr}{^{n,\,\ppr}}
\newcommand{\nrm}{^{n,\,\pprm}}
\newcommand{\nzero}{^{n,\,\ppzero}}
\newcommand{\ntwoM}{^{n,\,\pptwoM}}

\newcommand{\Nr}{^{N,\,\ppr}}
\newcommand{\Er}{^{E,\,\ppr}}
\newcommand{\Sr}{^{S,\,\ppr}}
\newcommand{\Wr}{^{W,\,\ppr}}

\newcommand{\nh}{^{n+\nhalf}}
\newcommand{\nhr}{^{n+\nhalf,\,\ppr}}
\newcommand{\nhrm}{^{n+\nhalf,\,\pprm}}
\newcommand{\nhzero}{^{n+\nhalf,\,\ppzero}}
\newcommand{\nhtwoM}{^{n+\nhalf,\,\pptwoM}}

\newcommand{\nj}{^{n,\,\ppj}}
\newcommand{\nhi}{^{n+\nhalf,\,\ppi}}

\newcommand{\nssr}{^{**,\,\ppr}}
\newcommand{\nsssr}{^{***,\,\ppr}}

\newcommand{\Wik}{W_{i-k}}
\newcommand{\Wjk}{W_{j-k}}

\newcommand{\Wikjl}{W_{i-k,\,j-l}}

\newcommand{\Wikrl}{W_{i-k,\,r-l}}
\newcommand{\Wikjr}{W_{i-k,\,j-r}}
\newcommand{\Wikrr}{W_{i-k,\,r-r}}
\newcommand{\Wkirl}{W_{k-i,\,r-l}}

\newlength{\dhatheight}
\newcommand{\doublehat}[1]{\settoheight{\dhatheight}{\ensuremath{\hat{#1}}}\addtolength{\dhatheight}{-0.35ex}\hat{\vphantom{\rule{1pt}{\dhatheight}}\smash{\hat{#1}}}}

\newcommand{\N}{^{N}}
\newcommand{\E}{^{E}}
\renewcommand{\S}{^{S}}
\newcommand{\W}{^{W}}

\ifthenelse{\isundefined{\Wr}}{
	\newcommand{\Wr}{^{W,\,\ppr}}
}{
	\renewcommand{\Wr}{^{W,\,\ppr}}
}

\newcommand{\rx}{\prt{\rho_x}}
\newcommand{\ry}{\prt{\rho_y}}

\DeclareMathOperator*{\minmod}{minmod}

\newcommand{\curlyC}{\mathcal{C}}

\renewcommand{\review}[2]{}
\renewcommand{\tableofcontents}{}
\renewcommand{\listofreviews}{}

\expandafter\def\csname ver@etex.sty\endcsname{3000/12/31}

\usepackage{autonum}
\makeatletter
\patchcmd{\autonum@saveEnvironmentSubcommands}
{(0,0)\begin}
{(0,0)\hfuzz=\maxdimen\begin}
{}{}
\makeatother

\makeatletter
\autonum@generatePatchedReferenceCSL{ref}
\autonum@generatePatchedReferenceCSL{eqref}
\autonum@generatePatchedReferenceCSL{Cref}
\makeatother

\AtBeginDocument{

} 

\newcommand{\revisionnote}[1]{}

\newcommand{\revision}[1]{#1} 
\begin{document}
\maketitle

\begin{abstract}
	We propose fully discrete, implicit-in-time finite-volume schemes for a general family of non-linear and non-local Fokker-Planck equations with a gradient-flow structure, usually known as aggregation-diffusion equations, in any dimension. The schemes enjoy the positivity-preservation and energy-dissipation properties, essential for their practical use. The first-order scheme verifies these properties unconditionally for general non-linear diffusions and interaction potentials, while the second-order scheme does so provided a CFL condition holds. Sweeping dimensional splitting permits the efficient construction of these schemes in higher dimensions while preserving their structural properties. Numerical experiments validate the schemes and show their ability to handle complicated phenomena typical in aggregation-diffusion equations, such as free boundaries, metastability, merging and phase transitions.
\end{abstract}

\subjectclassification{35Q70; 35Q91; 45K05; 65M08.}

\keywords{Gradient flows; finite-volume methods; fully discrete schemes; positivity preservation; energy dissipation; integro-differential equations.}
 
\tableofcontents
\listofreviews

\section{Introduction}

This work concerns the family of non-linear and non-local aggregation-diffusion equations
\begin{equation}\label{eq:continuous}
	\left\{
	\begin{aligned}
		 & \pt\rho=\div\brk{\rho\grad\prt{H'(\rho)+V+W\conv\rho}}, \quad \bx\in\Rd,\ t>0, \\
		 & \rho(0,\bx)=\rho_0(\bx),
	\end{aligned}
	\right.
\end{equation}
where $\rho=\rho(t,\bx)\geq 0$ is the unknown \textit{particle density}, $H(\rho)$ is the \textit{density of internal energy}, $V(\bx)$ is the \textit{confinement potential}, and $W(\bx)$ is the so-called \textit{interaction potential}; see \cite{C.M.V2003,Villani2003}. $H$ is a convex function by definition, in order to ensure that the first term, $\nabla\cdot[\rho H''(\rho)\nabla \rho]$, is a (possibly degenerate) non-linear density-dependent diffusion. The drift terms $\nabla\cdot [\rho\nabla V]$ and $\nabla\cdot [\rho\nabla(W\ast\rho)]$ correspond respectively to forces acting on the particles due to external sources $V(\bx)$, and to attractive-repulsive forces between particles given by a potential $W(\bx)$.

These equations can be derived as mean-field limits of particle systems or as upscalings of cellular automata, and have applications in granular materials \cite{B.C.P1997,B.C.P1999,B.C.C+1998}, cell migration and chemotaxis \cite{B.C.M2007,C.C2006,L.C.A2008,C.C.Y2019}, collective motion of animals (swarming) \cite{T.B.L2006,K.C.B+2013,C.F.T+2010}, opinion formation \cite{M.T2014,G.P.Y2017}, self-assembly of nanoparticles \cite{H.P2006}, and mathematical finance \cite{G.P.Y2013}, among others. In the case of linear diffusion, these models correspond to macroscopic limits of interacting particle systems in terms of Fokker-Planck type equations; see, for instance, \cite{Sznitman1991,B.C.C2011} and the references therein. They can lead to interesting evolutionary phenomena, such as noise-induced phase transitions or metastability behaviour \cite{B.C.C+2016,G.P2018,B.D.Z2017}, present too in non-linear diffusion models \cite{B.F.H2014,C.C.H2015,C.C.Y2019}. Typical interaction potentials $W(\bx)$ which appear in applications are radial, and can be fully attractive, such as the Newtonian and Bessel potentials in chemotaxis \cite{C.C.Y2019} or power-laws in granular materials \cite{C.M.V2003}; repulsive in the short range and attractive in the long range, such as combinations of power-law or Morse-type potentials in swarming \cite{T.B.L2006,C.H.M2014}; or compactly supported potentials in many biological applications, such as networks and cell sorting \cite{B.D.Z2017,C.C.S2018}.

\Cref{eq:continuous} possesses interesting properties. First, its solution is always a non-negative density. Second, it has a variational structure: it is the \textit{gradient flow} of a free energy, as discovered in \cite{C.M.V2003}. We can define the \textit{free-energy functional} by
\begin{equation}
	E(\rho)=\int_{\Rd} \brk{ H(\rho)+V\rho +\frac{1}{2}(W\ast \rho )\rho } \, \dd{\bx}\, ,
\end{equation}
whose formal variation for zero mass perturbations is given by $\xi:=\frac{\delta E}{\delta \rho}=H'(\rho)+V+W\ast\rho$; the evolution of the free energy along a solution of \eqref{eq:continuous} is given by
\begin{equation}\label{eq:energyDissipation}
	\frac{\dd{E}}{\dd{t}}=\int_{\Rd}\frac{\delta E}{\delta \rho}\frac{\partial \rho}{\partial t}\,\dd{\bx}=\int_{\Rd} \xi \nabla \cdot[\rho \nabla \xi]\,\dd{\bx}=-\int_{\Rd} \rho|\nabla \xi|^2\,\dd{\bx}\leq 0.
\end{equation}
This dissipation property has another interpretation: the solution to \eqref{eq:continuous} is the gradient flow or the curve of steepest descent for the free energy functional $E$ in the sense of the Euclidean transport distance between probability measures, as discussed in \cite{A.G.S2005,C.M.V2003,C.C.Y2019} and the references therein.  We remark that these structural properties persist when the equation is solved in a bounded domain $\Omega$ with no-flux boundary conditions, provided the convolution is understood by extending the density as zero outside $\Omega$, and that $\nabla \xi \cdot \eta(x)=0$ is satisfied for all $x\in\partial\Omega$, where $\eta(x)$ is the outwards unit normal vector to the boundary of $\Omega$.

It is important to notice that the dissipation property entails a full characterisation of the set of stationary states: they are given by non-negative densities such that $\xi$ is constant (possibly different) in each connected component of their support. Therefore, the free energy is a Lyapunov functional for \eqref{eq:continuous}, and will be useful to discuss the stability of the equilibria in many particular cases, see \cite{C.M.V2003,C.G.P+2020}.

These non-linear Fokker-Planck equations have drawn much attention by the numerical analysis and simulation communities in recent times. Indeed, a central question has been the design of numerical schemes capable of preserving the structural properties of the gradient flow \eqref{eq:continuous}: the non-negativity of the solution, the dissipation property \eqref{eq:energyDissipation}, and a corresponding discrete set of stationary states which accurately capture the long-time asymptotics of these equations. First and second-order-accurate finite-volume schemes which treat \cref{eq:continuous} as a non-linear continuity equation (with a vector field given by $-\nabla\xi$) were proposed in \cite{B.F2012} for non-linear diffusions, and in \cite{C.C.H2015} for aggregation-diffusion equations. Their explicit discretisations preserve the positivity-preservation property under a CFL condition; entropy dissipation is shown for the semi-discrete schemes (continuous in time). A generalisation of these ideas for high-order approximations has been proposed in \cite{S.C.S2018}, using a discontinuous Galerkin approach with a suitable quadrature rule which employs Gauss-Lobatto formulas.

In the case of linear diffusion, schemes which enjoy the semi-discrete energy-dissipation property were known for Fokker-Planck equations in granular media \cite{B.C.S2004,B.D2010}, based on the Chang-Cooper discretisation approach \cite{C.C1970}. Such schemes have been generalised and improved for equations of the form \eqref{eq:continuous} in \cite{P.Z2018}, leading to second-order-accurate finite-difference schemes, again with a semi-discrete dissipation. Linear Fokker-Planck equations have many other entropies; for instance, spectral schemes were used in \cite{G.J.Y2012} to achieve the dissipation of the weighted $L^2$ entropies. Finally, implicit-in-time semi-discretisations were proposed in \cite{A.U2003,C.G.J2008} which dissipate the discrete-in-time energy for non-linear Fokker-Planck equations corresponding to \eqref{eq:continuous} with $W=0$. These schemes are reminiscent of the convex splitting ideas in the variational $L^2$ framework, as developed in \cite{S.X.Y2019}; however, they are not directly applicable in the setting of gradient flows with respect to measures. Other numerical schemes used for non-linear Fokker-Planck equations include finite-element schemes \cite{B.C.W2010}; particle/blob methods \cite{C.B2015,C.C.C+2018,C.C.P2019}; and approaches based on the gradient flow formulation, in terms of the steepest descent of Euclidean transport distances \cite{B.C.C2008,C.M2010,M.O2014,J.M.O2017,C.R.W2016,C.D.M+2018}.

In a recent work \cite{A.B.P+2018}, several fully discrete, implicit-in-time discretisations have been proposed for the Keller-Segel model in one dimension (with linear diffusion and non-linear chemosensitivity) which possess the energy dissipation property. Among them, they presented an implicit-in-time, fully discrete scheme based on the gradient flow ideas of \cite{B.F.H2014,C.C.H2015,S.C.S2018}. In the present work we generalise these ideas proposing fully discrete (in both space and time), implicit finite-volume schemes for \Cref{eq:continuous} which are both positivity-preserving and energy-dissipating; these properties are met unconditionally by a scheme with first-order accuracy in time and space, and met under a parabolic CFL condition by a second-order-in-space scheme. Both schemes work for general non-linear diffusions and general interaction potentials $W$, through a careful combination of the implicit-in-time discretisations as hinted in \cite{A.B.P+2018}. Special care has to be taken in using the implicit schemes as the Jacobian matrix can be ill-conditioned in the presence of vacuum, due to the non-linear diffusion terms. A detailed study of the positivity for these schemes is done via M-matrix arguments.

Our next contribution is to propose for the first time a combination of these gradient-flow schemes with a generalised dimensional splitting technique; this results in fully discrete, implicit finite-volume schemes for \cref{eq:continuous}, which are positivity-preserving and energy-dissipating in higher dimensions with a reasonable computational cost. We remark that a direct generalisation of the one-dimensional schemes presented here and in \cite{A.B.P+2018} to multiple dimensions is possible, but comes with a very high computational cost caused by the inversion of large Jacobian matrices. Our approach takes advantage of the dimensional splitting to drastically reduce this cost without sacrificing the fundamental properties of the scheme.

The rest of this work is organised as follows: \cref{sec:timediscretisation} will discuss the time discretisation for \eqref{eq:continuous} in a way that preserves the energy dissipation for general non-linearities and interaction potentials; \cref{sec:1Dschemes} is devoted to the fully discrete scheme in the one-dimensional setting; \revision{\cref{sec:2DschemesDS,sec:2DschemesDSCoupled} respectively introduce the dimensional splitting and the \textit{sweeping dimensional splitting} formulations for higher dimensions;} \cref{sec:validation} is aimed at validating the numerical scheme on explicit solutions for both non-linear diffusions and aggregations; finally, \cref{sec:experiments} presents numerical experiments that showcase the effectiveness of the new scheme in dealing with complicated phenomena, such as metastability or phase transitions, both in one and two dimensions, with linear and non-linear diffusions. \section{Time Discretisation}\label{sec:timediscretisation}
The choice of time discretisation is an integral step in the construction a fully discrete, energy-dissipating scheme. In this section, we shall consider two discretisations, which will lead to two fully discrete schemes, discussed later in the work.

To begin, we define the semi-discrete energy at time $t\n$ as follows:
\begin{equation}
	\begin{aligned}
		E(\rho^{n})=\int_{\Rd} H(\rho^{n})+V\rho\n+\frac{1}{2}(W\ast \rho^{n})\rho\n\,\dd{\bx}.
	\end{aligned}
\end{equation}
For the sake of generality, consider the following first-order scheme:
\begin{equation}\label{eq:semidiscrete}
	\frac{\rho\np-\rho\n}{\Dt}=\nabla\cdot (\rho^*\nabla(H'(\rho\np)+V+W\ast\rho\nss)),
\end{equation}
where $\rho^*$ and $\rho\nss$ can be either $\rho\n$ or $\rho\np$ and will be specified later.
\revision{Here, we have chosen $H'\prt{\rho\np}$ over $H'\prt{\rho\n}$ because the explicit case has been explored in \cite{A.U2003,C.G.J2008} and does not lead to a fully discrete dissipation of the free energy.}
We now attempt to show that scheme \eqref{eq:semidiscrete} verifies $E(\rho\np)\leq E(\rho\n)$, provided $\rho\n\geq 0$, at any time step $t\n$. To this end, we first multiply both sides of \eqref{eq:semidiscrete} by $H'(\rho\np)+V+W\ast\rho\nss$ and integrate to obtain
\begin{align}
	\lll\int_{\Rd}(\rho\np-\rho\n)(H'(\rho\np)+V+W\ast\rho\nss)\,\dd{\bx}        \\
	 & = -\Dt\int_{\Rd} \rho^*|\nabla(H'(\rho\np)+V+W\ast\rho\nss)|^2\,\dd{\bx},
	\label{eq:energyDisProof1}
\end{align}
using integration by parts on the right-hand side. From \eqref{eq:energyDisProof1}, we have
\begin{align}
	\int_{\Rd}(\rho\np-\rho\n)V\,\dd{\bx}
	 & = -\int_{\Rd}(\rho\np-\rho\n)(H'(\rho\np)+W\ast\rho\nss)\,\dd{\bx}             \\
	 & \quad - \Dt\int_{\Rd} \rho^*|\nabla(H'(\rho\np)+V+W\ast\rho\nss)|^2\,\dd{\bx}.
	\label{eq:energyDisProof2}
\end{align}
Then,
\begin{align}
	\lll E(\rho\np)-E(\rho\n)                                                                                                                \\
	 & = \int_{\Rd} H(\rho\np)-H(\rho^{n})+(\rho\np-\rho\n)V+\frac{1}{2}(W\ast \rho\np)\rho\np-\frac{1}{2}(W\ast \rho^{n})\rho^{n}\,\dd{\bx} \\
	 & = \int_{\Rd} H(\rho\np)-H(\rho^{n})-(\rho\np-\rho\n)H'(\rho\np)\,\dd{\bx}                                                             \\
	 & \quad + \int_{\Rd}\frac{1}{2}(W\ast \rho\np)\rho\np-\frac{1}{2}(W\ast \rho^{n})\rho^{n}-(\rho\np-\rho\n)(W\ast\rho\nss)\,\dd{\bx}     \\
	 & \quad - \Dt\int_{\Rd} \rho^*|\nabla(H'(\rho\np)+V+W\ast\rho\nss)|^2\,\dd{\bx}                                                         \\\
	 & = I+II+III,
\end{align}
having used \eqref{eq:energyDisProof2} in the second equality. Here,
\begin{align}
	I   & :=\int_{\Rd} H(\rho\np)-H(\rho^{n})-(\rho\np-\rho\n)H'(\rho\np)\,\dd{\bx};                                                   \\
	II  & :=\int_{\Rd}\frac{1}{2}(W\ast \rho\np)\rho\np-\frac{1}{2}(W\ast \rho^{n})\rho^{n}-(\rho\np-\rho\n)(W\ast\rho\nss)\,\dd{\bx}; \\
	III & :=-\Dt\int_{\Rd} \rho^*|\nabla(H'(\rho\np)+V+W\ast\rho\nss)|^2\,\dd{\bx}.
\end{align}

Our goal is to show that $I+II+III\leq 0$ in order to arrive at $E(\rho\np)\leq E(\rho\n)$. In part $I$,
\begin{equation}
	H(\rho\np)-H(\rho^{n})\leq (\rho\np-\rho\n)H'(\rho\np),
\end{equation}
which follows from the convexity of $H$; hence, $I\leq 0$, as already pointed out in \cite{A.U2003,C.G.J2008}.

In part $II$, there are several possible choices for $\rho\nss$, explored below.
\paragraph{Explicit Case.} If $\rho\nss=\rho^{n}$, we have
\begin{align}
	II(\rho\n) & :=\int_{\Rd}\frac{1}{2}(W\ast \rho\np)\rho\np-\frac{1}{2}(W\ast \rho^{n})\rho^{n}-(\rho\np-\rho\n)(W\ast\rho\n)\,\dd{\bx}\nonumber \\
	           & =\frac{1}{2}\int_{\Rd} \left[ (W\ast \rho\np)\rho\np-2(W\ast \rho^{n})\rho\np+(W\ast \rho^{n})\rho^{n}\right]\,\dd{\bx}\nonumber   \\
	           & =\frac{1}{2}\iint_{\mathbb{R}^{2d}} W(\bx-\by)(\rho\np(\bx)-\rho\n(\bx))(\rho\np(\by)-\rho\n(\by))\,\dd{\bx}\dd{\by},
\end{align}
since $W$ is symmetric. If $W(\bx)=\frac{|\bx|^2}{2}$:
\begin{align}
	II(\rho\n) & = \frac{1}{2}\iint_{\mathbb{R}^{2d}} \left(\frac{|\bx|^2}{2}-\bx\cdot \by+\frac{|\by|^2}{2}\right)(\rho\np(\bx)-\rho^{n}(\bx))(\rho\np(\by)-\rho\n(\by))\,\dd{\bx}\dd{\by} \\
	           & = \frac{1}{2}\left[\left(\int_{\Rd}\frac{|\bx|^2}{2}(\rho\np(\bx)-\rho\n(\bx))\,\dd{\bx}\right)\left(\int_{\Rd} \rho\np(\by)-\rho^{n}(\by)\,\dd{\by}\right)\right.         \\
	           & \quad +\left(\int_{\Rd}\frac{|\by|^2}{2}(\rho\np(\by)-\rho\n(\by))\,\dd{\by}\right)\left(\int_{\Rd} \rho\np(\bx)-\rho^{n}(\bx)\,\dd{\bx}\right)                            \\
	           & \quad \left.-\left(\int_{\Rd} \bx(\rho\np(\bx)-\rho\n(\bx))\,\dd{\bx}\right)\cdot\left(\int_{\Rd} \by(\rho\np(\by)-\rho^{n}(\by))\,\dd{\by}\right)\right]                  \\
	           & = -\frac{1}{2}\left|\int_{\Rd} \bx(\rho\np(\bx)-\rho\n(\bx))\,\dd{\bx}\right|^2 \leq 0,
\end{align}
where we used the conservation of mass property,
\begin{align}
	\int_{\Rd} \rho\np(\bx)\,\dd{\bx}=\int_{\Rd}\rho\n(\bx)\,\dd{\bx}.
\end{align}
If $\hat{W}(\xi)\leq 0$, where $\hat{W}(\xi)$ is the Fourier transform of $W(\bx)$:
\begin{equation}
	\begin{aligned}
		II(\rho\n) & =\frac{1}{2}\int_{\Rd}(W\ast (\rho\np-\rho\n))(\bx)(\rho\np(\bx)-\rho\n(\bx))\,\dd{\bx},                                                                    \\
		           & =\frac{1}{2}\iint_{\mathbb{R}^{2d}}\hat{W}(\xi)(\hat{\rho}\np(\xi)-\hat{\rho}\n(\xi))(\rho\np(\bx)-\rho\n(\bx))e^{2\pi i \bx\cdot \xi}\,\dd{\xi}\,\dd{\bx}, \\
		           & =\frac{1}{2}\int_{\mathbb{R}^{d}}\hat{W}(\xi)(\hat{\rho}\np(\xi)-\hat{\rho}\n(\xi))(\hat{\rho}\np(-\xi)-\hat{\rho}\n(-\xi))\,\dd{\xi},                      \\
		           & =\frac{1}{2}\int_{\mathbb{R}^{d}}\hat{W}(\xi)(\hat{\rho}\np(\xi)-\hat{\rho}\n(\xi))(\overline{\hat{\rho}\np(\xi)}-\overline{\hat{\rho}\n(\xi)})\,\dd{\xi},  \\
		           & =\frac{1}{2}\int_{\Rd} \hat{W}(\xi)|\hat{\rho}\np(\xi)-\hat{\rho}\n(\xi)|^2\,\dd{\xi}\leq 0.
	\end{aligned}
\end{equation}

\paragraph{Implicit Case.} If $\rho\nss=\rho\np$, we have
\begin{align}
	II(\rho\np) & :=\int_{\Rd}\frac{1}{2}(W\ast \rho\np)\rho\np-\frac{1}{2}(W\ast \rho^{n})\rho^{n}-(\rho\np-\rho\n)(W\ast\rho\np)\,\dd{\bx}\nonumber,         \\
	            & =-\frac{1}{2}\int_{\Rd} \left[ (W\ast \rho\np)\rho\np-2(W\ast \rho^{n})\rho\np+(W\ast \rho^{n})\rho^{n}\right]\,\dd{\bx}\nonumber,           \\
	            & =-\frac{1}{2}\iint_{\mathbb{R}^{2d}}\shiftleft W(\bx-\by)(\rho\np(\bx)-\rho\n(\bx))(\rho\np(\by)-\rho\n(\by))\,\dd{\bx}\dd{\by}=-II(\rho\n),
\end{align}
which is just the negative of the previous case. Thus, we conclude $II(\rho\np)\leq 0$ for $W(\bx)=-\frac{|\bx|^2}{2}$ and $\hat{W}(\xi)\geq 0$.

\paragraph{Unconditional Case.} If $\rho\nss=\prt{\rho\np+\rho\n}/{2}$, we have
\begin{align}
	II =\int_{\Rd}\frac{1}{2}(W\ast \rho\np)\rho\np-\frac{1}{2}(W\ast \rho^{n})\rho^{n}-\frac{1}{2}(\rho\np-\rho\n)(W\ast(\rho\np+\rho\n))\,\dd{\bx}\nonumber = 0.
\end{align}
Therefore, $II$ is exactly zero regardless of the choice of $W$.

Finally, $III\leq 0$ follows from the positivity of $\rho^{*}$, chosen as either $\rho\n$ or $\rho\np$. We now remark that the computations above remain unaltered when considered over a bounded domain $\Omega$ with no-flux boundary conditions. It is a simple exercise to check that the boundary terms left by the integration by parts vanish.

Henceforth, an interaction $W$ satisfying $II(\rho\n)=-II(\rho\np)\leq 0$ (respectively $II(\rho\n)=-II(\rho\np)\geq 0$) will be referred as a
\revision{\textit{negative-definite} (resp. \textit{positive-definite})} interaction potential. We have just shown that attractive (resp. repulsive) quadratic potentials, as well as certain catastrophic (resp. H-stable) potentials (according to the notation of classical statistical mechanics in \cite{Ruelle1969}), are \revision{negative definite (resp. positive definite)}. These potentials are very relevant in applications for the existence of both steady states and phase transitions, with and without (linear or non-linear) diffusion terms; see \cite{C.C.P2015,C.D.M2016,C.D.P2019,C.K.Y2013,B.D.Z2017,C.G.P+2020} and the references therein.

To summarise, a suitable choice of $\rho\nss$, given $W$, always exists, and we have obtained two time discretisation methods which satisfy an energy dissipation property:
\begin{proposition}\label{prop:timediscrete}
	\begin{enumerate}[I.]
		\item The implicit time discretisation
		      \begin{equation}\label{eq:S1discretisation}
			      \frac{\rho\np-\rho\n}{\Dt}=\nabla\cdot (\rho^{n}\nabla(H'(\rho\np)+V+W\ast\rho\nss)),
		      \end{equation}
		      satisfies the energy dissipation property, i.e., $E(\rho\np)\leq E(\rho\n)$, if one of the following conditions holds:
		      \begin{enumerate}[(i)]
			      \item $\rho\nss=\prt{\rho\np+\rho\n}/{2}$;
			      \item $\rho\nss=\rho^{n}$ and $W$ is a \revision{negative-definite} interaction potential;
			      \item $\rho\nss=\rho\np$ and $W$ is a \revision{positive-definite} interaction potential.
		      \end{enumerate}

		\item The implicit time discretisation
		      \begin{equation} \label{eq:S2discretisation}
			      \frac{\rho\np-\rho\n}{\Dt}=\nabla\cdot (\rho\np\nabla(H'(\rho\np)+V+W\ast\rho\nss)),
		      \end{equation}
		      satisfies the energy dissipation property, i.e., $E(\rho\np)\leq E(\rho\n)$, if one of the following conditions holds:
		      \begin{enumerate}[(i)]
			      \item $\rho\nss=\prt{\rho\np+\rho\n}/{2}$;
			      \item $\rho\nss=\rho^{n}$ and $W$ is a \revision{negative-definite} interaction potential;
			      \item $\rho\nss=\rho\np$ and $W$ is a \revision{positive-definite} interaction potential.
		      \end{enumerate}

	\end{enumerate}
\end{proposition}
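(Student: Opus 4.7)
The plan is to assemble the computation already carried out in the paragraphs preceding the statement into a single argument that applies uniformly to schemes~I and~II and to each of the three choices of $\rho\nss$. First I would test the update equation against the discrete potential $H'(\rho\np)+V+W\conv\rho\nss$, integrate over $\Rd$ (or $\Omega$ with no-flux boundary conditions), and integrate by parts to obtain the analogue of \eqref{eq:energyDisProof1}. Solving for $\int(\rho\np-\rho\n)V\,\dbx$ and substituting into the expansion of $E(\rho\np)-E(\rho\n)$ produces the decomposition $E(\rho\np)-E(\rho\n)=I+II+III$ written out explicitly in the text.

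Next, I would bound each of the three terms independently. The estimate $I\leq 0$ is immediate from convexity of $H$, since $H(\rho\np)-H(\rho\n)\leq (\rho\np-\rho\n)H'(\rho\np)$ pointwise. The estimate $III\leq 0$ reduces to $\rho\ns\geq 0$; for scheme~I this is just the standing positivity hypothesis on $\rho\n$, while for scheme~II it requires positivity preservation of $\rho\np$ itself. This last point is the only structural ingredient not yet established at this stage of the paper, and it is the main obstacle to a self-contained statement: it must be taken as an assumption here and will be confirmed in the fully discrete analysis of later sections via the M-matrix arguments mentioned in the introduction.

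Granting the positivity of $\rho\ns$, only the interaction term $II$ remains, and here the three cases split cleanly. For $\rho\nss=(\rho\np+\rho\n)/2$, direct algebra cancels the cross terms and gives $II=0$ regardless of $W$. For $\rho\nss=\rho\n$, symmetrising in $\bx\leftrightarrow\by$ rewrites $II$ as the quadratic form
\begin{equation}
II=\tfrac{1}{2}\iint_{\mathbb{R}^{2d}} W(\bx-\by)\,\mu(\bx)\,\mu(\by)\,\dbx\dd{\by},\qquad \mu:=\rho\np-\rho\n,
\end{equation}
where $\int\mu\,\dbx=0$ by mass conservation; the negative-definiteness hypothesis on $W$ (whether read pointwise, as for $W(\bx)=|\bx|^2/2$, or in Fourier via $\hat{W}\leq 0$) then supplies $II\leq 0$. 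For $\rho\nss=\rho\np$, the same manipulation flips the overall sign, so positive-definiteness of $W$ again delivers $II\leq 0$. Summing $I+II+III\leq 0$ closes the argument in all six sub-cases of the proposition. The boundary contributions from the integration by parts in a bounded domain $\Omega$ vanish under the no-flux condition, so the identical argument covers that setting without modification.
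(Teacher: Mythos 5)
Your proposal is correct and follows essentially the same route as the paper: the proposition is justified there by exactly the preceding computation you reassemble (test against $\xi=H'(\rho\np)+V+W\conv\rho\nss$, integrate by parts, isolate $\int(\rho\np-\rho\n)V\,\dbx$, and split $E(\rho\np)-E(\rho\n)=I+II+III$ with $I\leq 0$ by convexity, $III\leq 0$ by positivity of $\rho^*$, and $II$ handled case by case). Your explicit remark that $III\leq 0$ for scheme~II presupposes the non-negativity of $\rho\np$ is a fair and accurate sharpening of the paper's brief assertion, and it is consistent with the paper's own closing caveat that well-posedness of the semi-discrete schemes in the set of non-negative densities is left open.
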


It is an interesting open problem to show that the schemes \eqref{eq:S1discretisation} and \eqref{eq:S2discretisation} are well posed in the set of non-negative densities, for small enough $\Dt$, under reasonable assumptions on the potentials. \section{Fully Discrete Schemes in One Dimension}
\label{sec:1Dschemes}

In this section we present fully discrete schemes in one dimension, by coupling the time discretisations of \cref{sec:timediscretisation} with the finite-volume method in space. We introduce two schemes, corresponding to the discretisations \eqref{eq:S1discretisation} and \eqref{eq:S2discretisation}.

To begin, we consider a large computational domain $\Omega = [-L,L]$, for $L>0$, and divide it into $2M$ uniform cells of size $\Dx= L/M$. We denote the $i$-th cell by $C\i=[x\imh,x\ih]$ for $i=1,\dots,2M$; the cell centre is $x\i=-L+\Dx(i-1/2)$. No-flux boundary conditions are assumed at the boundaries.

\subsection{Scheme 1 (S1)}

This scheme is based on the implicit time discretisation (\ref{eq:S1discretisation}). The spatial discretisation follows the fully explicit finite-volume method in \cite{C.C.H2015}, where only semi-discrete energy dissipation was shown (continuous in time). By modifying certain terms into implicit form, we obtain a fully discrete, energy-dissipating scheme with second-order accuracy in space.

Assume $\rho\i$ is the cell average on $C\i$; the scheme reads
\begin{align}\label{eq:S1}
	\begin{split}
		&\frac{\rho\i\np-\rho\i\n}{\Dt}+\frac{F\ih\np-F\imh\np}{\Dx}=0,\\
		&F\ih\np=\rho\i\E(u\ih\np)^++\rho\ip\W(u\ih\np)^-,\\
		&(u\ih\np)^+=\max(u\ih\np,0),
		\qquad(u\ih\np)^-=\min(u\ih\np,0),\\
		&u\ih\np=-\frac{\xi\ip\np-\xi\i\np}{\Dx}, \quad
		\\ &
		\xi\i\np=H'(\rho\i\np)+V\i+(W\ast\rho\nss)\i,\\
		&\rho\i\E=\rho\i\n+\frac{\Dx}{2}\rx\n\i,
		\qquad\rho\i\W=\rho\i\n-\frac{\Dx}{2}\rx\n\i,\\
		&\rx\n\i=\text{minmod}\left(\theta\frac{\rho\ip\n-\rho\i\n}{\Dx}, \frac{\rho\ip\n-\rho_{i-1}\n}{2\Dx},\theta\frac{\rho_{i}\n-\rho_{i-1}\n}{\Dx}\right),
	\end{split}
\end{align}
for a symmetric potential $W$. As suggested previously, $\rho\nss$ could be $\rho\n$, $\rho\np$, or $\prt{\rho\np+\rho\n}/{2}$. The confining potential terms are defined as $V\i=V(x\i)$, and the convolution is given by $(W\ast\rho\nss)\i=\sum_{k=1}^{2M}\Wik\rho_k\nss\Dx$, where $\Wik=W(x\i-x\k)$. The minmod limiter is defined as
\begin{equation}
	\text{minmod} (z_1,z_2\cdots):=\left\{
	\begin{aligned}
		 & \min(z_1,z_2,\cdots), &  & \text{if} \ \text{all } z\i>0, \\
		 & \max(z_1,z_2,\cdots), &  & \text{if} \ \text{all } z\i<0, \\
		 & 0                     &  & \text{otherwise};
	\end{aligned}
	\right.
\end{equation}
we choose $\theta=2$.

\begin{remark}\label{rm:singularW}
	One might be interested in potentials $W$ with an integrable singularity at the origin, as is the case in \cite{C.C.H2015}. In that situation, the definition of $\Wik$ is modified to an integral form,
	\begin{equation}
		\Wik=\frac{1}{\Dx}\int_{C\k}W(x\i-s) \,\dd{s},
	\end{equation}
	along the corresponding cell.
\end{remark}

\subsubsection{Positivity Preservation Property}

\begin{theorem}\label{th:S1CFL}
	Scheme \eqref{eq:S1} is positivity preserving: if $\rho\i\n\geq 0$ for all $i$, then $\rho\i\np\geq 0$ for all $i$, provided the following CFL condition is satisfied:
	\begin{equation}\label{eq:S1CFL}
		\Dt\leq \frac{\Dx}{2\max\i\set*{(u\ih\np)^+,-(u\ih\np)^-}}.
	\end{equation}
\end{theorem}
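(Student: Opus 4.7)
The plan is to mimic the classical positivity argument for explicit upwind finite-volume schemes, but keep in mind that the velocity $u\ih\np$ is determined implicitly; the CFL condition in \cref{eq:S1CFL} is therefore an \emph{a posteriori} constraint on the computed solution rather than a verifiable hypothesis on $\rho\n$.

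First I would dispose of the reconstructions: since $\theta=2$ in the minmod limiter, the standard argument gives $\rho\i\E,\rho\i\W\in[0,2\rho\i\n]$ whenever $\rho\i\n\ge 0$. Indeed, $\rho\i\E+\rho\i\W=2\rho\i\n$ and, for example when $\rx\n\i\ge 0$, the minmod bound $\rx\n\i\le 2(\rho\i\n-\rho_{i-1}\n)/\Dx$ together with $\rho_{i-1}\n\ge 0$ yields $\rho\i\W=\rho\i\n-\tfrac{\Dx}{2}\rx\n\i\ge 0$ and hence $\rho\i\E\le 2\rho\i\n$; the symmetric case is analogous.

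Next, I would expand the scheme by splitting each numerical flux according to the sign of $u\ih\np$, and reorganise the resulting five-term expression so that the positive-velocity contributions at $i+\nhalf$ and negative-velocity contributions at $i-\nhalf$ (which feed mass \emph{into} cell $i$ from the neighbours) are isolated as manifestly non-negative terms, while the positive-velocity contribution at $i+\nhalf$ and the negative-velocity contribution at $i-\nhalf$ (which drain mass from cell $i$) are the only terms that can lower $\rho\i\np$. This yields the lower bound
\begin{equation}
\rho\i\np \;\ge\; \rho\i\n - \frac{\Dt}{\Dx}\bigl[\rho\i\E (u\ih\np)^+ + \rho\i\W |(u\imh\np)^-|\bigr].
\end{equation}

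Then I would use $\rho\i\E+\rho\i\W=2\rho\i\n$ to absorb the reconstructions into $\rho\i\n$. Writing $\alpha=\rho\i\E/\rho\i\n$ and $\beta=\rho\i\W/\rho\i\n$ (when $\rho\i\n>0$; the case $\rho\i\n=0$ forces $\rho\i\E=\rho\i\W=0$ and gives $\rho\i\np\ge 0$ trivially from the non-negative neighbour terms), one has $\alpha+\beta=2$ and therefore
\begin{equation}
\alpha(u\ih\np)^+ + \beta|(u\imh\np)^-| \;\le\; 2\max\i\set*{(u\ih\np)^+,-(u\ih\np)^-},
\end{equation}
so that
\begin{equation}
\rho\i\np \;\ge\; \rho\i\n\bigl(1 - \tfrac{2\Dt}{\Dx}\max\i\set*{(u\ih\np)^+,-(u\ih\np)^-}\bigr),
\end{equation}
which is non-negative precisely when \cref{eq:S1CFL} holds. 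Finally, one checks that the no-flux boundary conditions $F_{1/2}\np=F_{2M+1/2}\np=0$ make the argument go through unchanged in the two boundary cells.

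The main conceptual subtlety, rather than any technical obstacle, is that $u\ih\np$ depends on $\rho\np$ itself, so this is a conditional statement rather than a constructive positivity result; well-posedness of the implicit step is the open issue already flagged at the end of \cref{sec:timediscretisation}. On the computational side, the only place one must be careful is the minmod reconstruction step, where using $\theta>2$ would break the bound $\rho\i\E,\rho\i\W\le 2\rho\i\n$ and hence the estimate above.
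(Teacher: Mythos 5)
Your proof is correct and follows essentially the same route as the paper: both rest on splitting the fluxes by the sign of $u\ih\np$, using the identity $\rho\i\E+\rho\i\W=2\rho\i\n$, and observing that the CFL condition makes every remaining coefficient non-negative (the paper writes the update exactly as a non-negative combination of $\rho_{i-1}\E$, $\rho\i\E$, $\rho\i\W$, $\rho\ip\W$, whereas you drop the manifestly non-negative incoming terms and normalise, which is the same estimate). The only substantive additions on your side are the explicit verification that the $\theta=2$ minmod reconstruction yields $\rho\i\E,\rho\i\W\geq 0$, which the paper asserts ``by construction,'' and the (accurate) remark that the condition is \emph{a posteriori} because $u\ih\np$ depends on $\rho\np$.
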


\begin{proof}
	From the definition of the scheme in \eqref{eq:S1}, we have
	\begin{equation} \label{eq:S1CFLProof1}
		\frac{\rho\i\np-\rho\i\n}{\Dt}+\frac{\rho\i\E\pos{u\ih\np}+\rho\ip\W\neg{u\ih\np}-\rho_{i-1}\E\pos{u\imh\np}-\rho\i\W\neg{u\imh\np}}{\Dx}=0.
	\end{equation}
	Hence,
	\begin{align}
		\rho\i\np & = \frac{1}{2}\prt{\rho\i\E+\rho\i\W} -\frac{\Dt}{\Dx}\prt{\rho\i\E\pos{u\ih\np}+\rho\ip\W\neg{u\ih\np}-\rho_{i-1}\E\pos{u\imh\np}-\rho\i\W\neg{u\imh\np}} \\
		          & = \frac{\Dt}{\Dx}\pos{u\imh\np}\rho_{i-1}\E + \left(\frac{1}{2}-\frac{\Dt}{\Dx}\pos{u\ih\np}\right)\rho\i\E                                               \\
		          & \quad + \left(\frac{1}{2}+\frac{\Dt}{\Dx}\neg{u\imh\np}\right)\rho\i\W - \frac{\Dt}{\Dx}\neg{u\ih\np}\rho\ip\W.
	\end{align}
	By construction, if $\rho\n\i \geq 0$, then $\rho\i\E, \rho\i\W \geq 0$. Moreover, $\pos{u\imh\np}\geq 0$, $\neg{u\imh\np}\leq 0$. Provided condition \eqref{eq:S1CFL} is satisfied, $\rho\i\np$ is a sum of non-negative values, hence $\rho\i\np\geq 0$.
\end{proof}

\begin{remark}
	For the equivalent first-order scheme, where $\rho\i\E=\rho\i\W=\rho\i\n$, \cref{eq:S1CFLProof1} becomes
	\begin{equation}
		\frac{\rho\i\np-\rho\i\n}{\Dt}+\frac{\rho\i\n\pos{u\ih\np}+\rho\ip\n\neg{u\ih\np}-\rho_{i-1}\n\pos{u\imh\np}-\rho\i\n\neg{u\imh\np}}{\Dx}=0,
	\end{equation}
	which yields
	\begin{equation}
		\rho\i\np=\left(1-\frac{\Dt}{\Dx}\pos{u\ih\np}+\frac{\Dt}{\Dx}\neg{u\imh\np}\right)\rho\i\n+\frac{\Dt}{\Dx}\rho_{i-1}\n\pos{u\imh\np}-\frac{\Dt}{\Dx}\rho\ip\n\neg{u\ih\np}.
	\end{equation}
	Hence, a sufficient CFL condition to guarantee positivity can be
	\begin{equation}
		\Dt\leq \frac{\Dx}{\max\i \set*{(u\ih\np)^+ - (u\imh\np)^-}}.
	\end{equation}
\end{remark}

\begin{remark}
	Since $u\ih\np=\mathcal{O}(\Dx^{-1})$, a parabolic CFL condition $\Dt=\mathcal{O}(\Dx^2)$ is normally required for the scheme (in either its first or its second-order version) to guarantee the positivity preservation property.
\end{remark}

\subsubsection{Energy Dissipation Property}

We shall define the fully discrete free energy at time $t\n$ by
\begin{equation}\label{eq:discreteEnergy}
	E_\Delta\prt{\rho\n} =
	\Dx\prt*{
		\sum_{i=1}^{2M} H\prt{\rho\i\n}
		+\sum_{i=1}^{2M} V\i\rho\i\n
		+\frac{\Dx}{2}\sum_{i,k=1}^{2M}\Wik\rho\i\n\rho\k\n
	}.
\end{equation}
We would like to show that $E_\Delta(\rho\n)$ decreases at each time step. To that end, it is useful to introduce a classification for interaction potentials, following the discussion in Section 2:

\begin{definition}[Positive and negative-definite potential]
	Assume $\rho\i\n\geq 0$ on the interval $[-L,L]$ and zero outside of it at any time $t\n$. An interaction potential $W$ such that
	\begin{align}
		\sum_{i,k=1}^{2M}\Wik(\rho\i\np-\rho\i\n)(\rho\k\np-\rho\k\n)\leq 0
		\qquad \prt{\textrm{respectively } \geq 0}
	\end{align}
	is called a \revision{\textit{negative-definite}} (resp. \revision{\textit{positive-definite}}) \textit{potential}.
\end{definition}

We can classify the following potentials:
\begin{proposition}\label{th:potentialCharacterisation}
	\begin{enumerate}[I.]
		\item A quadratic potential $W(x)={x^2}/{2}$ is \revision{negative definite}.
		\item A quadratic potential $W(x)=-{x^2}/{2}$ is \revision{positive definite}.
		\item A potential such that $\hat{W}\l\leq 0$ (resp. $\hat{W}\l\geq 0$) for all $l=1,\dots,4M$, where
		      \begin{align}
			      \hat{W}\l=\sum_{k=-M+1}^{3M}W\k\exp\set*{-2\pi i \frac{(k+M-1)(l-1)}{4M}}
		      \end{align}
		      is the discrete Fourier transform of the sequence $W\k=W(x\k)$ defined on $[-2L,2L]$, is \revision{negative definite} (resp. \revision{positive definite}).
	\end{enumerate}
\end{proposition}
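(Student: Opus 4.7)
The plan is to handle Parts I and II by direct algebraic manipulation that exploits discrete mass conservation, and Part III via a discrete Parseval/convolution-theorem argument on the zero-padded $4M$-point periodisation of $W$.

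For Parts I and II, write $f_i := \rho_i^{n+1} - \rho_i^n$. Substituting $W_{i-k} = (x_i - x_k)^2/2$ and expanding gives
\begin{equation*}
\sum_{i,k=1}^{2M} W_{i-k} f_i f_k = \frac{1}{2}\Bigl(\sum_i x_i^2 f_i\Bigr)\Bigl(\sum_k f_k\Bigr) - \Bigl(\sum_i x_i f_i\Bigr)\Bigl(\sum_k x_k f_k\Bigr) + \frac{1}{2}\Bigl(\sum_i f_i\Bigr)\Bigl(\sum_k x_k^2 f_k\Bigr).
\end{equation*}
The finite-volume scheme \eqref{eq:S1} is conservative: summing the cell update over $i = 1, \dots, 2M$, the flux differences telescope and the no-flux boundary conditions $F_{1/2}^{n+1} = F_{2M+1/2}^{n+1} = 0$ force $\sum_i f_i = 0$. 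The two marginal terms above therefore vanish and the form collapses to $-\bigl(\sum_i x_i f_i\bigr)^2 \le 0$, which establishes Part I. Part II follows immediately by replacing $W$ with $-W$.

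For Part III, I would zero-extend $f$ from $\{1,\dots,2M\}$ to the enlarged index set $\{-M+1,\dots,3M\}$ by setting $f_i=0$ outside its physical support. Since $f$ then vanishes away from $\{1,\dots,2M\}$, the quadratic form is unaffected: $\sum_{i,k=1}^{2M} W_{i-k} f_i f_k = \sum_{i,k=-M+1}^{3M} W_{i-k} f_i f_k$. Invoking the symmetry $W(-x)=W(x)$, one may view the $4M$-periodic extension of $W$ as a symmetric circulant kernel on $\mathbb{Z}/4M\mathbb{Z}$. The range of differences $i-k$ with $i,k\in\{1,\dots,2M\}$ lies in the size-$(4M-1)$ window $\{-(2M-1),\dots,2M-1\}$, which fits strictly inside a single fundamental period of length $4M$; hence the linear and circular convolutions agree on the support of $f$, and so do their inner products with $f$. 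Since symmetric circulants are diagonalised by the DFT with real spectrum $\hat W$, the discrete Plancherel identity yields
\begin{equation*}
\sum_{i,k=1}^{2M} W_{i-k}\, f_i f_k \;=\; \frac{1}{4M}\sum_{l=1}^{4M} \hat W_l\, \lvert \hat f_l \rvert^2.
\end{equation*}
Because $\lvert \hat f_l \rvert^2 \ge 0$, the hypotheses $\hat W_l \le 0$ (resp.\ $\hat W_l \ge 0$) for all $l$ directly yield the negative- (resp.\ positive-)definiteness claim.

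The main obstacle is bookkeeping: making sure the zero-padding from the $2M$ physical cells to the $4M$ extended cells provides enough room that the relevant difference indices never wrap around the $4M$-period, and that the shifted DFT index convention $k\in\{-M+1,\dots,3M\}$ used in the statement lines up with the standard $4M$-point convolution theorem after reindexing $m = k+M-1\in\{0,\dots,4M-1\}$. Once these identifications are in place, both the telescoping mass-conservation step and the discrete Plancherel identity are elementary, and no genuine analytic difficulty remains.
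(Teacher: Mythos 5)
Your proposal is correct and follows essentially the same route as the paper: Parts I--II by expanding $(x_i-x_k)^2/2$ and killing the marginal terms with discrete mass conservation (telescoping fluxes under no-flux boundary conditions), leaving $-\bigl(\sum_i x_i f_i\bigr)^2\le 0$; Part III by zero-padding to the $4M$-point index set, invoking the circular convolution theorem (your explicit check that the difference window of size $4M-1$ avoids wrap-around is a nice touch the paper leaves implicit), and concluding via Parseval that the form equals $\frac{1}{4M}\sum_l \hat W_l\,|\hat f_l|^2$. No gaps.
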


\begin{proof}
	If $W(x)={x^2}/{2}$, then
	\begin{align}
		 & \sum_{i,k=1}^{2M}\Wik(\rho\np\i-\rho\n\i)(\rho\k\np-\rho\k\n)
		\\=&
		\sum_{i,k=1}^{2M}\frac{(x\i-x\k)^2}{2} (\rho\np\i-\rho\n\i)(\rho\k\np-\rho\k\n)
		,                                                                \\=&
		\sum_{i=1}^{2M} x\i^2(\rho\i\np-\rho\i\n)\sum_{k=1}^{2M} (\rho\k\np-\rho\k\n)-\left(\sum_{i=1}^{2M} x\i(\rho\i\np-\rho\i\n)\right)^2
		,                                                                \\=&
		-\left(\sum_{i=1}^{2M} x\i(\rho\i\np-\rho\i\n)\right)^2\leq 0,
	\end{align}
	where we have used the fact that the scheme (\ref{eq:S1}) conserves the mass, $\sum_{i=1}^{2M} \rho\i\n=\sum_{i=1}^{2M} \rho\i\np$, under the no-flux boundary condition.

	If $W(x)$ is such that $\hat{W}_l\leq 0$ for all $l=1,\dots,4M$, then
	\begin{align}
		 & \sum_{j,k=1}^{2M}\Wjk (\rho\np\j-\rho\n\j)(\rho\k\np-\rho\k\n)
		\\=&
		\sum_{j=-M+1}^{3M}\sum_{k=-M+1}^{3M}\Wjk (\rho\np\j-\rho\n\j)(\rho\k\np-\rho\k\n)
		,                                                                 \\=&
		\frac{1}{4M}\sum_{j=-M+1}^{3M}\sum_{l=1}^{4M}\hat{W}_l(\hat{\rho}\np\l-\hat{\rho}\n\l)e^{2\pi i \frac{(j+M-1)(l-1)}{4M}} (\rho\np\j-\rho\n\j)
		,                                                                 \\=&
		\frac{1}{4M}\sum_{l=1}^{4M}\hat{W}\l(\hat{\rho}\np\l-\hat{\rho}\n\l)\overline{(\hat{\rho}\l\np-\hat{\rho}\l\n)}
		,                                                                 \\=&
		\frac{1}{4M}\sum_{l=1}^{4M}\hat{W}\l|\hat{\rho}\np\l-\hat{\rho}\n\l|^2\leq 0,
	\end{align}
	where the first equality extends summation to the interval $[-2L,2L]$, since $\rho\n\i=0$ outside $[-L,L]$, and the second equality uses the discrete circular convolution theorem by assuming that the sequence $W\i$ defined on $[-2L,2L]$ is periodically extended to the whole space $\mathbb{R}$.
\end{proof}

We can now state the following:

\begin{theorem}\label{th:energydissipationS1}
	Under the CFL condition \eqref{eq:S1CFL}, scheme \eqref{eq:S1} satisfies the energy dissipation property, i.e., $E_\Delta(\rho\np)\leq E_\Delta(\rho\n)$, if one of the following conditions holds:
	\begin{enumerate}[(i)]
		\item $\rho\nss=\prt{\rho\np+\rho\n}/{2}$;
		\item $\rho\nss=\rho\n$ and the potential $W$ is \revision{negative definite};
		\item $\rho\nss=\rho\np$ and the potential $W$ is \revision{positive definite}.
	\end{enumerate}
\end{theorem}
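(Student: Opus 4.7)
The plan is to mirror the continuous calculation in \cref{prop:timediscrete} at the fully discrete level. First, I would invoke \cref{th:S1CFL} under the stated CFL condition to secure $\rho\i\np\geq 0$ for all $i$, which is essential because the upcoming convexity step on $H$ only works inside the domain of convexity. I would also record that the minmod reconstruction with $\theta=2$ together with $\rho\i\n\geq 0$ yields $\rho\i\E,\rho\i\W\geq 0$; this non-negativity is indispensable for the dissipation term.

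Next I would derive a discrete analog of identity \eqref{eq:energyDisProof1}. Multiply the scheme \eqref{eq:S1} by $\Dx\,\xi\i\np$, sum over $i=1,\dots,2M$, and apply discrete summation by parts. The no-flux boundary conditions kill the boundary contributions, and one arrives at
\begin{equation*}
\Dx\sum_{i=1}^{2M}(\rho\i\np-\rho\i\n)\xi\i\np
=-\Dt\,\Dx\sum_{i=1}^{2M}F\ih\np\,u\ih\np.
\end{equation*}
Because of the upwinding rule $F\ih\np=\rho\i\E\pos{u\ih\np}+\rho\ip\W\neg{u\ih\np}$, the product splits cleanly as
\begin{equation*}
F\ih\np\,u\ih\np=\rho\i\E\bigl(\pos{u\ih\np}\bigr)^{2}+\rho\ip\W\bigl(\neg{u\ih\np}\bigr)^{2}\geq 0,
\end{equation*}
so this right-hand side, which will play the role of $III$, is non-positive. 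I would then substitute into the telescoped energy difference $E_\Delta(\rho\np)-E_\Delta(\rho\n)$ using \eqref{eq:discreteEnergy} and the definition $\xi\i\np=H'(\rho\i\np)+V\i+(W\ast\rho\nss)\i$ to decompose it as $I+II+III$, with $I:=\Dx\sum_i\bigl[H(\rho\i\np)-H(\rho\i\n)-(\rho\i\np-\rho\i\n)H'(\rho\i\np)\bigr]$ and $II:=\frac{\Dx^{2}}{2}\sum_{i,k}\Wik\bigl(\rho\i\np\rho\k\np-\rho\i\n\rho\k\n\bigr)-\Dx^{2}\sum_{i,k}\Wik(\rho\i\np-\rho\i\n)\rho\k\nss$.

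For $I\leq 0$ I would just quote convexity of $H$ pointwise: $H(a)-H(b)\leq H'(a)(a-b)$ valid on $[0,\infty)$, now legitimate thanks to the CFL-guaranteed positivity. For $II$, the symmetry of $W$ (i.e.\ $\Wik=W_{k-i}$) lets me rewrite the quadratic difference via the identity $2(\rho\i\np\rho\k\np-\rho\i\n\rho\k\n)=(\rho\i\np-\rho\i\n)(\rho\k\np+\rho\k\n)+(\rho\i\np+\rho\i\n)(\rho\k\np-\rho\k\n)$, after which the three choices of $\rho\nss$ each collapse $II$ to a multiple of $\sum_{i,k}\Wik(\rho\i\np-\rho\i\n)(\rho\k\np-\rho\k\n)$: case (i) gives $II=0$ exactly; case (ii) gives $II=\tfrac{\Dx^{2}}{2}\sum_{i,k}\Wik(\rho\i\np-\rho\i\n)(\rho\k\np-\rho\k\n)\leq 0$ by negative-definiteness; case (iii) gives the negative of this quantity, hence $\leq 0$ by positive-definiteness.

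I expect the main obstacle to be the bookkeeping of the summation-by-parts step together with the careful splitting of the upwind flux, since the argument must proceed in one shot with both the non-linear diffusion and the non-local interaction appearing inside $\xi\i\np$. A secondary subtlety worth flagging is that CFL enters only through the positivity of $\rho\np$ needed for the convexity inequality; the dissipation $III\leq 0$ and the interaction bound on $II$ hold independently of $\Dt$. Combining $I,II,III\leq 0$ in each listed case yields $E_\Delta(\rho\np)\leq E_\Delta(\rho\n)$, completing the proof.
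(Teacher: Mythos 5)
Your proposal is correct and follows essentially the same route as the paper's proof: the same $I+II+III$ decomposition of $E_\Delta(\rho\np)-E_\Delta(\rho\n)$, convexity of $H$ for $I$, the symmetry of $W$ collapsing $II$ to the definite quadratic form in $\rho\np-\rho\n$ for each of the three choices of $\rho\nss$, and summation by parts plus the upwind splitting of $F\ih\np u\ih\np$ (your $\rho\i\E(\pos{u\ih\np})^2+\rho\ip\W(\neg{u\ih\np})^2$ is equivalent to the paper's $\min(\rho\i\E,\rho\ip\W)\abs{u\ih\np}^2$ bound) for $III$. Your closing remark on where the CFL condition actually enters is a fair and accurate observation, not a deviation.
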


\begin{proof}
	From the definition of scheme \eqref{eq:S1}, it follows
	\begin{align}
		\sum_{i=1}^{2M}(\rho\i\np-\rho\i\n)\xi\i\np=-\frac{\Dt}{\Dx}\sum_{i=1}^{2M} (F\ih\np-F\imh\np)\xi\i\np,
	\end{align}
	with $\xi\i\np=H'(\rho\i\np)+V\i+\sum_{k=1}^{2M}\Wik\rho\k\nss\Dx$. We can rewrite it, as in \eqref{eq:energyDisProof2}, to obtain
	\begin{align}
		\lll \sum_{i=1}^{2M}(\rho\i\np-\rho\i\n)V\i                                                                                                                     \\
		 & = -\frac{\Dt}{\Dx}\sum_{i=1}^{2M} (F\ih\np-F\imh\np)\xi\i\np-\sum_{i=1}^{2M}(\rho\i\np-\rho\i\n)\left(H'(\rho\i\np)+\sum_{k=1}^{2M}\Wik\rho\k\nss\Dx\right).
	\end{align}
	Using the definition of the discrete energy, \cref{eq:discreteEnergy}, and the identity above, we deduce
	\begin{align}
		\lll E_\Delta(\rho\np)-E_\Delta (\rho\n)                                                                                                 \\
		 & = \Dx\left(\sum_{i=1}^{2M} H(\rho\i\np)+\sum_{i=1}^{2M} V\i\rho\i\np+\frac{\Dx}{2}\sum_{i,k=1}^{2M}\Wik\rho\i\np\rho\k\np\right)      \\
		 & \quad -\Dx\left(\sum_{i=1}^{2M} H\prt{\rho\i\n}+\sum_{i=1}^{2M} V\i\rho\i\n+\frac{\Dx}{2}\sum_{i,k=1}^{2M}\Wik\rho\i\n\rho\k\n\right) \\
		 & = \Dx \sum_{i=1}^{2M}\left( H(\rho\i\np)-H\prt{\rho\i\n}-H'(\rho\i\np)(\rho\i\np-\rho\i\n) \right)                                    \\
		 & \quad +\frac{\Dx^2}{2}\sum_{i,k=1}^{2M}\Wik \left(\rho\i\np\rho\k\np-\rho\i\n\rho\k\n-2(\rho\i\np-\rho\i\n)\rho\k\nss \right)         \\
		 & \quad - \Dt \sum_{i=1}^{2M} (F\ih\np-F\imh\np)\xi\i\np                                                                                \\
		 & = I+II+III,
	\end{align}
	where
	\begin{align}
		I   & :=\Dx \sum_{i=1}^{2M}\left( H(\rho\i\np)-H\prt{\rho\i\n}-H'(\rho\i\np)(\rho\i\np-\rho\i\n) \right);                       \\
		II  & :=\frac{\Dx^2}{2}\sum_{i,k=1}^{2M}\Wik \left(\rho\i\np\rho\k\np-\rho\i\n\rho\k\n-2(\rho\i\np-\rho\i\n)\rho\k\nss \right); \\
		III & :=-\Dt \sum_{i=1}^{2M} (F\ih\np-F\imh\np)\xi\i\np.
	\end{align}

	The first term is easily controlled since $H(\rho\i\np)-H\prt{\rho\i\n}-H'(\rho\i\np)(\rho\i\np-\rho\i\n)\leq 0$ since $H(\rho)$ is convex, hence $I\leq 0$.

	Considering $II$:
	\begin{itemize}
		\item If $\rho\nss=\rho\n$,
		      \begin{equation}
			      II=\frac{\Dx^2}{2}\sum_{i,k=1}^{2M}\Wik (\rho\np\i-\rho\n\i)(\rho\k\np-\rho\k\n).
		      \end{equation}
		      Then, $II\leq 0$ if $W$ is negative definite.

		\item If $\rho\nss=\rho\np$,
		      \begin{equation}
			      II=-\frac{\Dx^2}{2}\sum_{i,k=1}^{2M}\Wik (\rho\np\i-\rho\n\i)(\rho\k\np-\rho\k\n).
		      \end{equation}
		      Then, $II\leq 0$ if $W$ is positive definite.
		\item
		      If $\rho\nss=\prt{\rho\np+\rho\n}/{2}$, $II\equiv0$.
	\end{itemize}

	Therefore, for the aforementioned choices of $\rho\nss$ and corresponding choices of $W$, we find
	\begin{align}
		E_\Delta(\rho\np)-E_\Delta (\rho\n)\leq & -\Dt\sum_{i=1}^{2M} \xi\i\np(F\ih\np-F\imh\np)                                \\
		=                                       & -\Dt\sum_{i=1}^{2M-1} F\ih\np(\xi\i\np-\xi\ip\np)                             \\
		=                                       & -\Dt\Dx\sum_{i=1}^{2M-1} F\ih\np u\ih\np                                      \\
		=                                       & -\Dt\Dx\sum_{i=1}^{2M-1}(\rho\i\E\pos{u\ih\np}+\rho\ip\W\neg{u\ih\np})u\ih\np \\
		\leq                                    & -\Dt\Dx\sum_{i=1}^{2M-1} \min(\rho\i\E,\rho\ip\W)|u\ih\np|^2\leq 0,
	\end{align}
	where the first equality employs summation by parts as well as the no-flux boundary condition, and the last inequality relies on the positivity of $\rho\i\E$ and $\rho\i\W$.
\end{proof}

\subsection{Scheme 2 (S2)}

This scheme is based on the time discretisation (\ref{eq:S2discretisation}). The spatial discretisation is the same as in the first-order version of S1. By modifying certain terms into implicit form, we can obtain a fully discrete unconditionally positivity-preserving and energy-dissipating scheme. This scheme is related to the one introduced in \cite{A.B.P+2018} for the Keller-Segel model with non-linear chemosensitivity and linear diffusion.

Assume $\rho\i$ is the cell average on $C\i$; the scheme reads
\begin{align}\label{eq:S2}
	\begin{split}
		&\frac{\rho\i\np-\rho\i\n}{\Dt}+\frac{F\ih\np-F\imh\np}{\Dx}=0,\\
		&F\ih\np=
		\rho\i\np(u\ih\np)^++\rho\ip\np(u\ih\np)^-,\\
		&(u\ih\np)^+=\max(u\ih\np,0),
		\qquad(u\ih\np)^-=\min(u\ih\np,0),\\
		&u\ih\np=-\frac{\xi\ip\np-\xi\i\np}{\Dx}, \quad
		\\ &
		\xi\i\np=H'(\rho\i\np)+V\i+(W\ast\rho\nss)\i,
	\end{split}
\end{align}
for a symmetric potential $W$. Again, $\rho\nss$ may be chosen as $\rho\n$, $\rho\np$, or $\prt{\rho\np+\rho\n}/{2}$; $V\i$ and $(W\ast\rho\nss)\i$ are defined as above.

\subsubsection{Positivity Preservation Property}

\begin{theorem}\label{th:S2CFL}
	Scheme \eqref{eq:S2} is unconditionally positivity preserving: if $\rho\i\n\geq 0$ for all $i$, then $\rho\i\np\geq 0$ for all $i$.
\end{theorem}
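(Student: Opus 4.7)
The plan is to recast scheme \eqref{eq:S2} as a linear system $A\rho\np = \rho\n$ for the vector $(\rho_1\np,\dots,\rho_{2M}\np)^T$, where the tridiagonal matrix $A$ has entries depending non-linearly on $\rho\np$ through the velocities $u\ih\np$. I would then show that $A$ is a non-singular M-matrix regardless of the signs of the $u\ih\np$, so that $A^{-1}\geq 0$ entrywise and the positivity of $\rho\np$ follows immediately from that of $\rho\n$.

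First I would substitute the upwind flux definition into the update equation and group terms by index, which yields the tridiagonal coefficients
\begin{equation*}
A_{ii} = 1 + \frac{\Dt}{\Dx}\brk*{\pos{u\ih\np}-\neg{u\imh\np}}, \qquad A_{i,i-1} = -\frac{\Dt}{\Dx}\pos{u\imh\np}, \qquad A_{i,i+1} = \frac{\Dt}{\Dx}\neg{u\ih\np},
\end{equation*}
with the convention $u_{1/2}\np = u_{2M+1/2}\np = 0$ enforced by the no-flux boundary condition. By construction $A_{ii}\geq 1>0$ and $A_{i,i\pm 1}\leq 0$, which is the sign pattern needed for an M-matrix; note that this relies crucially on the upwinding in the definition of $F\ih\np$, which places $\rho\i\np$ against the positive part and $\rho\ip\np$ against the negative part of $u\ih\np$.

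Next I would establish strict diagonal dominance column-by-column. A direct computation in an interior column $j$ gives
\begin{equation*}
A_{jj} - |A_{j-1,j}| - |A_{j+1,j}| = 1 + \frac{\Dt}{\Dx}\brk*{\pos{u\jh\np}-\neg{u\jmh\np}} - \frac{\Dt}{\Dx}\brk*{\pos{u\jh\np}-\neg{u\jmh\np}} = 1,
\end{equation*}
and the same identity persists at the boundary columns once the no-flux convention is applied. Hence $A^T$ (and therefore $A$) is strictly diagonally dominant with gap exactly $1$, hence invertible. Combined with the sign pattern, classical M-matrix theory then yields $A^{-1}\geq 0$ entrywise, so $\rho\np = A^{-1}\rho\n \geq 0$ with no restriction on $\Dt$ — the unconditional positivity claim.

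The one subtlety is that $A$ itself depends on $\rho\np$ through $\xi\np$ and $u\ih\np$, so the system is genuinely non-linear. The M-matrix reasoning, however, only invokes the \emph{sign} structure of the off-diagonal entries, which is enforced by upwinding and valid for any $u$ whatsoever; hence the argument applies to any $\rho\np$ satisfying the scheme. Existence of such a solution is a separate issue, already flagged as an open problem at the end of \cref{sec:timediscretisation}, and is not needed for the positivity statement itself.
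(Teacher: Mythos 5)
Your proposal is correct and follows essentially the same route as the paper: rewrite the scheme as $A(\rho^{n+1})\rho^{n+1}=\rho^{n}$, observe the upwinding forces non-positive off-diagonal entries and a positive diagonal, and deduce inverse-positivity from the strict diagonal dominance of $A^{T}$ (your column-by-column computation is exactly the paper's remark that $A$ itself fails row dominance but $A^{T}$ always satisfies it). Your explicit listing of the matrix entries and your closing caveat separating positivity from existence of the non-linear solution are both consistent with, and slightly more detailed than, the published argument.
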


\begin{proof}From the definition of the scheme, \cref{eq:S2}, we find
	\begin{equation}
		\frac{\rho\i\np-\rho\i\n}{\Dt}+\frac{\rho\i\np\pos{u\ih\np}+\rho\ip\np\neg{u\ih\np}-\rho_{i-1}\np\pos{u\imh\np}-\rho\i\np\neg{u\imh\np}}{\Dx}=0,
	\end{equation}
	i.e.,
	\begin{equation}
		\left(1+\frac{\Dt}{\Dx}\pos{u\ih\np}-\frac{\Dt}{\Dx}\neg{u\imh\np}\right)\rho\i\np+\frac{\Dt}{\Dx}\neg{u\ih\np}\rho\ip\np-\frac{\Dt}{\Dx}\pos{u\imh\np}\rho_{i-1}\np=\rho\i\n,
	\end{equation}
	which may be written as
	\begin{equation}
		A(\rho\np)\rho\np=\rho\n.
	\end{equation}
	We would like to show that the matrix $A$ is inverse positive, meaning that every entry of $A^{-1}$ is non-negative. We recall a sufficient condition: a matrix $M$ is inverse positive if $m_{ij}\leq 0$ for $i\neq j$, $m_{ii}>0$, and $M$ is strictly diagonally dominant ($m_{ii}> \sum_{i\neq j}|m_{ij}|$). The matrix $A$ defined above does not satisfy the condition; however, $A^T$ always does. Hence, every entry of $(A^T)^{-1}$ is non-negative, and thus so are those of $A^{-1}$. Thus, if $\rho\n\i\geq 0$ for all $i$, then $\rho\np\i\geq 0$ for all $i$.
\end{proof}

\subsubsection{Energy Dissipation Property}

\begin{theorem}\label{th:energydissipationS2}
	Scheme \eqref{eq:S2} satisfies the energy dissipation property unconditionally, i.e., the discrete energy \eqref{eq:discreteEnergy} satisfies $E_\Delta(\rho\np)\leq E_\Delta(\rho\n)$, if one of the following condition holds:
	\begin{enumerate}[(i)]
		\item $\rho\nss=\prt{\rho\np+\rho\n}/{2}$;
		\item $\rho\nss=\rho\n$ and the potential $W$ is \revision{negative definite};
		\item $\rho\nss=\rho\np$ and the potential $W$ is \revision{positive definite}.
	\end{enumerate}
\end{theorem}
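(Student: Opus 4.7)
The proof plan mirrors that of \cref{th:energydissipationS1}, exploiting the parallel structure between schemes S1 and S2. First I would multiply the discrete equation in \eqref{eq:S2} by $\xi\i\np = H'(\rho\i\np) + V\i + \sum_{k=1}^{2M} \Wik \rho\k\nss \Dx$ and sum over $i$. After isolating the confinement contribution $\sum_i (\rho\i\np - \rho\i\n) V\i$ from the resulting identity and inserting it into the definition \eqref{eq:discreteEnergy}, one obtains the same three-term decomposition $E_\Delta(\rho\np) - E_\Delta(\rho\n) = I + II + III$ as in the S1 proof, namely
\begin{align*}
I   &:= \Dx \sum_{i=1}^{2M} \prt*{ H(\rho\i\np) - H(\rho\i\n) - H'(\rho\i\np)(\rho\i\np - \rho\i\n) }, \\
II  &:= \frac{\Dx^2}{2}\sum_{i,k=1}^{2M} \Wik \prt*{ \rho\i\np \rho\k\np - \rho\i\n \rho\k\n - 2(\rho\i\np - \rho\i\n)\rho\k\nss }, \\
III &:= -\Dt \sum_{i=1}^{2M}(F\ih\np - F\imh\np) \xi\i\np.
\end{align*}

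The terms $I$ and $II$ are handled exactly as in the proof of \cref{th:energydissipationS1}: $I \leq 0$ by the convexity of $H$, while the three alternatives (i)--(iii) on $\rho\nss$ either make $II$ vanish identically (case (i)) or render $II \leq 0$ via the sign-definiteness of $W$ (cases (ii) and (iii)). None of these arguments relies on the specific form of the numerical flux, so they carry over to scheme S2 without modification.

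The essential step is the treatment of $III$. Applying summation by parts together with the no-flux boundary condition (which makes the boundary fluxes vanish), and using $u\ih\np = -(\xi\ip\np - \xi\i\np)/\Dx$, I would rewrite
\begin{align*}
III = -\Dt \sum_{i=1}^{2M-1} F\ih\np \prt*{\xi\i\np - \xi\ip\np} = -\Dt\Dx \sum_{i=1}^{2M-1} F\ih\np u\ih\np.
\end{align*}
Substituting the definition of $F\ih\np$ from \eqref{eq:S2} and using the elementary identity $(u)^{\pm} u = [(u)^{\pm}]^2$, one obtains
\begin{align*}
F\ih\np\, u\ih\np = \rho\i\np \brk*{(u\ih\np)^+}^2 + \rho\ip\np \brk*{(u\ih\np)^-}^2 \geq 0,
\end{align*}
where the non-negativity is unconditional thanks to \cref{th:S2CFL}. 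Hence $III \leq 0$ without any restriction on $\Dt$, and combining with $I + II \leq 0$ yields the desired inequality $E_\Delta(\rho\np) \leq E_\Delta(\rho\n)$.

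The main point, rather than a genuine obstacle, is that the structural change between S1 and S2 --- replacing the minmod-reconstructed flux factors by the implicit cell averages --- is precisely what removes the CFL assumption. In the S1 argument the non-negativity required in the last step was supplied by the reconstructions $\rho\i\E$ and $\rho\i\W$, whose propagation to the next time step demanded a CFL condition; in S2 the corresponding factors $\rho\i\np$ and $\rho\ip\np$ are unconditionally non-negative by \cref{th:S2CFL}, so the iteration closes without any time-step restriction.
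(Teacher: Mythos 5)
Your proposal is correct and follows essentially the same route as the paper: reuse the $I+II+III$ decomposition from the S1 proof verbatim, and handle $III$ by summation by parts plus the unconditional non-negativity of the implicit cell averages $\rho\i\np$ guaranteed by \cref{th:S2CFL}. The only cosmetic difference is that you write $F\ih\np u\ih\np$ via the identity $(u)^{\pm}u=[(u)^{\pm}]^2$ where the paper bounds it below by $\min(\rho\i\np,\rho\ip\np)|u\ih\np|^2$; the two estimates are interchangeable.
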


\begin{proof}
	The proof of \cref{th:energydissipationS1} remains valid except for the last part, which instead is argued as
	\begin{equation}
		\begin{aligned}
			E_\Delta(\rho\np)-E_\Delta (\rho\n)\leq & -\Dt\sum_{i=1}^{2M} \xi\i\np(F\ih\np-F\imh\np)                                  \\
			=                                       & -\Dt\sum_{i=1}^{2M-1} F\ih\np(\xi\i\np-\xi\ip\np)                               \\
			=                                       & -\Dt\Dx\sum_{i=1}^{2M-1} F\ih\np u\ih\np                                        \\
			=                                       & -\Dt\Dx\sum_{i=1}^{2M-1}(\rho\i\np\pos{u\ih\np}+\rho\ip\np\neg{u\ih\np})u\ih\np \\
			\leq                                    & -\Dt\Dx\sum_{i=1}^{2M-1} \min(\rho\i\np,\rho\ip\np)|u\ih\np|^2
			\leq 0;
		\end{aligned}
	\end{equation}
	the unconditional positivity of $\rho\i$ is used in the last inequality.
\end{proof}
 \section{Higher Dimensions: Dimensional Splitting}
\label{sec:2DschemesDS}

\revision{
	The schemes presented above can be directly extended to any number of dimensions. In this section, instead, we propose higher-dimension schemes through the dimensional splitting technique, and show that the positivity preservation and energy dissipation properties are maintained.
}

\revision{
	The advantage of the dimensional splitting framework for numerical methods is a large reduction of the computational cost. The one-dimensional schemes from \cref{sec:1Dschemes} involve the solution of an implicit, non-linear problem at each time-step. For the sake of illustration, we consider the typical Newton-Raphson method. The computational cost of such a method arises from the need to invert a Jacobian matrix at each iteration; an $N\times N$ full matrix can be inverted with complexity $\mathcal{O}\prt{N^\gamma}$, for $2<\gamma\leq 3$ \cite{C.W1990, Strassen1969}. In a mesh of $N$ cells per dimension, the solution of a fully $d$-dimensional implicit scheme would require inverting $N^d\times N^d$ Jacobians, at cost $\mathcal{O}\prt{N^{d\gamma}}$. In comparison, the complexity of inverting the $dN^{d-1}$ Jacobians of size $N\times N$ required by the dimensionally split method is $\mathcal{O}\prt{dN^{d+\gamma-1}}$; this is an improvement at $d=2$ already, and certainly so for higher dimensions.
}

\revision{
	Some thought must be given to the convolution terms. First, we remark that the dimensional splitting technique does not reduce the overall cost of computing the convolution terms.
	Second, we observe that the Jacobian of the non-linear problem is a full matrix whenever the convolution terms are treated implicitly.
	More importantly, the estimation of the computational advantage shown above only holds provided the dimensionally split schemes decouple ``row-by-row'', i.e., the one-dimensional sub-problems given by the splitting are independent of each other. This is simply not the case when the interaction potential terms are treated implicitly or semi-implicitly, due to the convolution; a subtler approach for these cases is presented in \cref{sec:2DschemesDSCoupled}.
}

We proceed to describe two-dimensional versions of the schemes in the splitting framework; the extension to any number of dimensions can be done in a similar fashion. For simplicity, we assume a square domain $[-L,L]^2$ and partition both the $x$ and $y$-axes uniformly using $2M$ cells per axis. Then, $\Dx=\Dy=L/M$ and the $i,j$-th cell is denoted by $C\ij=[x\imh,x\ih]\times[y\jmh,y\jh] $, with centre at $x\ij=(-L+{\Dx}/{2}+\Dx(i-1),-L+{\Dy}/{2}+\Dy(j-1))$. Again, no-flux boundary conditions are assumed at the boundaries.

\subsection{Scheme 1 (S1)}

Assume $\rho\ij$ is the cell average on cell $C\ij$; the scheme reads:

\begin{itemize}
	\item \textbf{Step 1 --- Evolution in the $x$-direction}
	      \begin{align}\label{eq:S1SplittingX}
		      \begin{split}
			      &\frac{\rho\ij\nh-\rho\ij\n}{\Dt}+\frac{F\ihj\nh-F\imhj\nh}{\Dx}=0,\\
			      &F\ihj\nh=\rho\ij\E\pos{u\ihj\nh}+\rho\ipj\W\neg{u\ihj\nh},\\
			      &\pos{u\ihj\nh}=\max(u\ihj\nh,0),
			      \qquad\neg{u\ihj\nh}=\min(u\ihj\nh,0),\\
			      &u\ihj\nh=-\frac{\xi\ipj\nh-\xi\ij\nh}{\Dx}, \quad
			      \\&
			      \xi\ij\nh=H'( \rho\ij\nh)+V\ij+(W\ast\rho\nss)\ij,\\
			      &\rho\ij\E=\rho\ij\n+\frac{\Dx}{2}\rx\n\ij,
			      \qquad\rho\ij\W=\rho\ij\n-\frac{\Dx}{2}\rx\n\ij,\\
			      &\rx\n\ij=\minmod\left(\theta\frac{\rho\ipj\n-\rho\ij\n}{\Dx}, \frac{\rho\ipj\n-\rho\imj\n}{2\Dx},\theta\frac{\rho\ij\n-\rho\imj\n}{\Dx}\right).
		      \end{split}
	      \end{align}
	      Once again, the choice of $\rho\nss$ may be $\rho\n$, $\rho\nh$, or $\prt{\rho\n+\rho\nh}/{2}$. $V\ij=V(x\ij)$ and
	      \begin{equation}
		      (W\ast\rho\nss)\ij=\sum_{k,l=1}^{2M}W_{i-k,j-l}\rho\kl\nss\Dx\Dy,
	      \end{equation}
	      where $W_{i-k, j-l}=W(x\i-x\k,y\j-y\l)$. Remark \ref{rm:singularW} similarly applies for singular potentials in two dimensions.

	\item \textbf{Step 2 --- Evolution in the $y$-direction}
	      \begin{align}\label{eq:S1SplittingY}
		      \begin{split}
			      &\frac{\rho\ij\np-\rho\ij\nh}{\Dt}+\frac{G\ijh\np-G\ijmh\np}{\Dy}=0,\\
			      &G\ijh\np=\rho\ij\N\pos{v\ijh\np}+\rho\ijp\S\neg{v\ijh\np},\\
			      &\pos{v\ijh\np}=\max(v\ijh\np,0),
			      \qquad\neg{v\ijh\np}=\min(v\ijh\np,0),\\
			      &v\ijh\np=-\frac{\xi\ijp\np-\xi\ij\np}{\Dy}, \quad
			      \\&
			      \xi\ij\np=H'( \rho\ij\np)+V\ij+(W\ast\rho\nsss)\ij,\\
			      &\rho\ij\N=\rho\ij\nh+\frac{\Dx}{2}(\rho_y)\nh\ij,
			      \qquad\rho\ij\S=\rho\ij\nh-\frac{\Dx}{2}(\rho_y)\nh\ij,\\
			      &(\rho_y)\nh\ij=\minmod\left(\theta\frac{\rho\ijp\nh-\rho\ij\nh}{\Dy}, \frac{\rho\ijp\nh-\rho\ijm\nh}{2\Dy},\theta\frac{\rho\ij\nh-\rho\ijm\nh}{\Dy}\right).
		      \end{split}
	      \end{align}
	      Here, $\rho\nsss$ may be $\rho\nh$, $\rho\np$, or $\prt{\rho\nh+\rho\np}/{2}$. Other quantities, such as $(W\ast\rho\nsss)\ij$, are defined analogously.
\end{itemize}

\subsubsection{Positivity Preservation Property}

\begin{theorem}
	Scheme \textup{(\ref{eq:S1SplittingX},~\ref{eq:S1SplittingY})} is positivity preserving provided the following CFL condition is satisfied:
	\begin{equation}\label{eq:S1CFLSplitting}
		\Dt\leq \frac{1}{2}\min\set*{
			\frac{\Dx}{\max\ij\set*{\pos{u\ihj\nh}, \neg{u\ihj\nh}}},
			\frac{\Dy}{\max\ij\set*{\pos{v\ijh\np}, \neg{v\ijh\np}}}
		}.
	\end{equation}
\end{theorem}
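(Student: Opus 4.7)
The plan is to reduce the two-dimensional claim to two successive applications of the one-dimensional positivity theorem (\cref{th:S1CFL}). The key observation is that the splitting scheme \textup{(\ref{eq:S1SplittingX},~\ref{eq:S1SplittingY})} is, by construction, a sequence of essentially one-dimensional updates: Step~1 acts on each row $\set{\rho\ij}_{i=1}^{2M}$ (with $j$ fixed), and Step~2 acts on each column $\set{\rho\ij}_{j=1}^{2M}$ (with $i$ fixed). The only coupling across rows (or columns) occurs through the velocity fields $u\ihj\nh$ and $v\ijh\np$, but those velocities are just fixed numerical data once they have been computed from $\rho\nh$ and $\rho\np$ respectively, so they do not interfere with the sign analysis.

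First I would fix an arbitrary $j$ and observe that \eqref{eq:S1SplittingX} is structurally identical to the one-dimensional scheme \eqref{eq:S1}, with $\rho\i\n \leftrightarrow \rho\ij\n$, $\rho\i\np \leftrightarrow \rho\ij\nh$, $F\ih\np \leftrightarrow F\ihj\nh$, and $u\ih\np \leftrightarrow u\ihj\nh$. In particular, the slope-reconstructed states $\rho\ij\E,\rho\ij\W$ are computed purely from the row data $\set{\rho\ij\n}_i$, and the minmod limiter guarantees they are non-negative whenever $\rho\ij\n\geq 0$. Applying the argument of \cref{th:S1CFL} verbatim to this row yields $\rho\ij\nh\geq 0$, provided
\begin{equation}
	\Dt \leq \frac{\Dx}{2\max_{i}\set{\pos{u\ihj\nh},-\neg{u\ihj\nh}}}.
\end{equation}
Taking the maximum over all $j$ as well, the first entry of the min in \eqref{eq:S1CFLSplitting} is exactly what is needed to ensure $\rho\ij\nh\geq 0$ for every $i,j$.

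Next, with the intermediate state $\set{\rho\ij\nh}$ now known to be non-negative, I would fix an arbitrary $i$ and treat \eqref{eq:S1SplittingY} as a one-dimensional scheme in $y$ acting on the column $\set{\rho\ij\nh}_{j=1}^{2M}$, with $v\ijh\np$ in the role of the one-dimensional velocity. The reconstructed states $\rho\ij\N,\rho\ij\S$ are built from this column via the minmod limiter and are therefore non-negative. Applying \cref{th:S1CFL} to each column (with $\Dx$ replaced by $\Dy$ and $u$ replaced by $v$) gives $\rho\ij\np\geq 0$ for every $i,j$, provided
\begin{equation}
	\Dt \leq \frac{\Dy}{2\max_{j}\set{\pos{v\ijh\np},-\neg{v\ijh\np}}}.
\end{equation}
Maximising over $i$ and combining with the previous bound yields precisely \eqref{eq:S1CFLSplitting}.

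I do not anticipate any serious obstacle: the proof is essentially a bookkeeping reduction. The one subtlety worth mentioning explicitly is that the velocities $u\ihj\nh$ and $v\ijh\np$ involve the (possibly non-local) potential $\xi$ and hence depend on the entire 2D density through $H'$, $V$, and $W\ast\rho\nss$; one must check that this non-locality only enters through the numerical values of the velocities and does not alter the row-by-row (respectively column-by-column) decomposition of the positivity argument, which it does not, since the velocities appear linearly in the flux with the correct upwind sign structure inherited from the one-dimensional construction.
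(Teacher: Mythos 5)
Your proposal is correct and matches the paper's own proof in substance: the paper likewise treats each of the two splitting steps as a one-dimensional update, rewrites $\rho\ij\nh$ (and then $\rho\ij\np$) as a convex combination of the non-negative reconstructed states $\rho\imj\E,\rho\ij\E,\rho\ij\W,\rho\ipj\W$ (resp. the $N/S$ states) with coefficients made non-negative by each half of the CFL condition, exactly as in \cref{th:S1CFL}. The only cosmetic difference is that the paper writes out this algebra explicitly for both steps rather than citing the one-dimensional theorem, and your closing remark about the velocities entering only as fixed numerical data is a valid (if unstated in the paper) justification for that reduction.
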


\begin{proof}
	From the first step of the scheme, \eqref{eq:S1SplittingX}, follows
	\begin{equation}
		\frac{\rho\ij\nh-\rho\ij\n}{\Dt}+\frac{\rho\ij\E\pos{u\ihj\nh}+\rho\ipj\W\neg{u\ihj\nh}-\rho\imj\E\pos{u\imhj\nh}-\rho\ij\W\neg{u\imhj\nh}}{\Dx}=0.
	\end{equation}
	Hence,
	\begin{align}
		\rho\ij\nh & = \frac{1}{2} \prt{\rho\ij\E+\rho\ij\W} - \frac{\Dt}{\Dx} \left(\rho\ij\E\pos{u\ihj\nh} + \rho\ipj\W\neg{u\ihj\nh}\right. \\
		           & \quad - \left.\rho\imj\E\pos{u\imhj\nh} - \rho\ij\W\neg{u\imhj\nh}\right),                                                \\
		           & = \frac{\Dt}{\Dx}\pos{u\imhj\nh}\rho\imj\E+\left(\frac{1}{2}-\frac{\Dt}{\Dx}\pos{u\ihj\nh}\right)\rho\ij\E                \\
		           & \quad +\left(\frac{1}{2}+\frac{\Dt}{\Dx}\neg{u\imhj\nh}\right)\rho\ij\W-\frac{\Dt}{\Dx}\neg{u\ihj\nh}\rho\ipj\W.
	\end{align}
	Since $\rho\ij\nh$ are linear combinations of non-negative reconstructed values, $\rho\imj\E$, $\rho\ij\E$, $\rho\ij\W$, and $\rho\ipj\W$, and $\pos{u\imhj\nh}\geq 0$, $\neg{u\ihj\nh}\leq0$, we may conclude that $\rho\ij\nh\geq 0$, provided the condition
	\begin{equation}\label{eq:S1CFLSplittingProof1}
		\Dt\leq \frac{\Dx}{2\max\ij\set*{\pos{u\ihj\nh},-\neg{u\ihj\nh}}}
	\end{equation}
	is satisfied.

	The second step, \eqref{eq:S1SplittingY}, follows analogously:
	\begin{equation}
		\frac{\rho\ij\np-\rho\ij\nh}{\Dt}+\frac{\rho\ij\N\pos{v\ijh\np}+\rho\ijp\S\neg{v\ijh\np}-\rho\ijm\N\pos{v\ijmh\np}-\rho\ij\S\neg{v\ijmh\np}}{\Dy}=0,
	\end{equation}
	hence
	\begin{align}
		\rho\ij\np & = \frac{1}{2} \prt{\rho\ij\N+\rho\ij\S} - \frac{\Dt}{\Dy}\left(\rho\ij\N\pos{v\ijh\np} + \rho\ijp\S\neg{v\ijh\np} \right. \\
		           & \quad - \left.\rho\ijm\N\pos{v\ijmh\np} - \rho\ij\S\neg{v\ijmh\np}\right),                                                \\
		           & = \frac{\Dt}{\Dy}\pos{v\ijmh\np}\rho\ijm\N + \prt*{\frac{1}{2}-\frac{\Dt}{\Dy}\pos{v\ijh\np}}\rho\ij\N                    \\
		           & \quad + \prt*{\frac{1}{2}+\frac{\Dt}{\Dy}\neg{v\ijmh\np}}\rho\ij\S - \frac{\Dt}{\Dy}\neg{v\ijh\np}\rho\ijp\S.
	\end{align}
	Once more, $\rho\ij\np$ are linear combinations of non-negative reconstructed point values, $\rho\ijm\N$, $\rho\ij\N$, $\rho\ij\S$, and $\rho\ijp\S$ , and $\pos{v\ijmh\np}\geq 0$, $\neg{v\ijh\np}\leq0$; positivity, $\rho\ij\np\geq 0$, follows provided the condition
	\begin{equation}\label{eq:S1CFLSplittingProof2}
		\Dt\leq \frac{\Dy}{2\max\ij\set*{\pos{v\ijh\np},-\neg{v\ijh\np}}}
	\end{equation}
	is satisfied.

	The combination of \Cref{eq:S1CFLSplittingProof1,eq:S1CFLSplittingProof2} yields condition \cref{eq:S1CFLSplitting} in the theorem.
\end{proof}

\subsubsection{Energy Dissipation Property}

We define the fully discrete free energy at time $t\n$ by
\begin{equation}\label{eq:energyDiscrete2D}
	E_\Delta(\rho\n)=\Dx\Dy\left(\sum_{i,j=1}^{2M} H(\rho\ij\n)+\sum_{i,j=1}^{2M} V\ij\rho\ij\n+\frac{\Dx\Dy}{2}\sum_{i,j,k,l=1}^{2M}W_{i-k,j-l}\rho\ij\n\rho\kl\n\right).
\end{equation}
We aim to show the dissipation of $E_\Delta(\rho\n)$ at each time step. Again, we introduce the classification for interaction potentials:
\begin{definition}
	Assume $\bar{\rho}\ij, \tilde{\rho}\ij\geq 0$ on the domain $[-L,L]^2$ and zero outside of it. An interaction potential $W$ such that
	\begin{align}
		\sum_{i,j,k,l=1}^{2M}W_{i-k,j-l}(\bar{\rho}\ij-\tilde{\rho}\ij)(\bar{\rho}\kl-\tilde{\rho}\kl)\leq 0
		\qquad \prt{\textrm{respectively } \geq 0}
	\end{align}
	is called a \revision{\textit{negative-definite}} (resp. \revision{\textit{positive-definite}}) potential.
\end{definition}

A result analogous to \cref{th:potentialCharacterisation} can be obtained in higher dimensions to provide the same examples of negative and positive-definite potentials; this is left to the reader.

\begin{theorem}\label{th:energydissipationS1DS}
	Under the CFL condition \eqref{eq:S1CFLSplitting}, scheme \textup{(\ref{eq:S1SplittingX},~\ref{eq:S1SplittingY})} satisfies the energy dissipation property, i.e., $E_\Delta(\rho\np)\leq E_\Delta(\rho\n)$, if one of the following conditions holds:
	\begin{enumerate}[(i)]
		\item $\rho\nss=\prt{\rho\n+\rho\nh}/2$ and $\rho\nsss=\prt{\rho\nh+\rho\np}/2$;
		\item $\rho\nss=\rho\n$, $\rho\nsss=\rho\nh$, and the potential $W$ is \revision{negative-definite};
		\item $\rho\nss=\rho\nh$, $\rho\nsss=\rho\np$, and the potential $W$ is \revision{positive-definite}.
	\end{enumerate}
\end{theorem}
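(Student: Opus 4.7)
The plan is to show that each of the two sub-steps dissipates the full two-dimensional discrete energy \eqref{eq:energyDiscrete2D}, so that
\begin{equation*}
E_\Delta(\rho\np) \leq E_\Delta(\rho\nh) \leq E_\Delta(\rho\n).
\end{equation*}
The argument in each direction mirrors the proof of \cref{th:energydissipationS1}, with the only new ingredient being the genuinely two-dimensional nature of the convolution term.

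For the $x$-step, I would multiply \eqref{eq:S1SplittingX} by $\xi\ij\nh$, sum over all $i,j$ with weight $\Dx\Dy$, and solve for $\Dx\Dy\sum_{i,j}(\rho\ij\nh-\rho\ij\n)V\ij$ exactly as in the one-dimensional case. Substituting this identity into $E_\Delta(\rho\nh)-E_\Delta(\rho\n)$ produces a decomposition $I_x+II_x+III_x$, where $I_x$ collects the internal-energy contributions, $II_x$ collects the interaction contributions
\begin{equation*}
II_x = \frac{\Dx^2\Dy^2}{2}\sum_{i,j,k,l=1}^{2M} W_{i-k,j-l}\brk{\rho\ij\nh\rho\kl\nh-\rho\ij\n\rho\kl\n-2(\rho\ij\nh-\rho\ij\n)\rho\kl\nss},
\end{equation*}
and $III_x=-\Dt\Dy\sum_{i,j}(F\ihj\nh-F\imhj\nh)\xi\ij\nh$. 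Convexity of $H$ gives $I_x\leq 0$; the midpoint choice $\rho\nss=\prt{\rho\n+\rho\nh}/2$ makes $II_x$ vanish identically, while $\rho\nss=\rho\n$ (resp.\ $\rho\nss=\rho\nh$) reduces it to $\pm\frac{\Dx^2\Dy^2}{2}\sum W_{i-k,j-l}(\rho\ij\nh-\rho\ij\n)(\rho\kl\nh-\rho\kl\n)$, which is non-positive by the two-dimensional negative-definiteness (resp.\ positive-definiteness) of $W$. A per-row summation by parts (valid thanks to the no-flux boundary) then recasts
\begin{equation*}
III_x=-\Dt\Dx\Dy\sum_{i,j}\brk{\rho\ij\E\pos{u\ihj\nh}+\rho\ipj\W\neg{u\ihj\nh}}u\ihj\nh \leq -\Dt\Dx\Dy\sum_{i,j}\min(\rho\ij\E,\rho\ipj\W)|u\ihj\nh|^2 \leq 0,
\end{equation*}
thanks to the non-negativity of the reconstructions $\rho\ij\E,\rho\ij\W$ produced by the minmod limiter acting on the non-negative data $\rho\ij\n$. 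This shows $E_\Delta(\rho\nh)\leq E_\Delta(\rho\n)$.

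For the $y$-step, I would first invoke \eqref{eq:S1CFLSplitting} together with the preceding positivity-preservation theorem to conclude $\rho\ij\nh\geq 0$; the $y$-reconstructions $\rho\ij\N,\rho\ij\S$ are then non-negative, and the identical chain of estimates applies to \eqref{eq:S1SplittingY} with $\rho\nsss$ in the role of $\rho\nss$ and $\rho\nh$ in the role of $\rho\n$. This yields $E_\Delta(\rho\np)\leq E_\Delta(\rho\nh)$, and concatenating the two inequalities closes the argument.

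The main difficulty I anticipate is not technical but structural. The internal-energy and confinement terms decouple across rows or columns, so the row-by-row summation by parts handles them without incident. The convolution, however, couples all slices at once; one must check that the full two-dimensional discrete energy (rather than a sum of one-dimensional row-energies) is indeed the quantity being dissipated at each half-step, and that the three admissible pairings of $(\rho\nss,\rho\nsss)$ prescribed by the theorem are precisely those for which the definiteness of $W$ forces the sign of $II_x$ and $II_y$ to be non-positive in tandem.
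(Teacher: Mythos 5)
Your proposal is correct and follows essentially the same route as the paper: each half-step is shown to dissipate the full two-dimensional discrete energy via the decomposition into the convexity term, the interaction term (controlled by the choice of $\rho\nss$, $\rho\nsss$ and the definiteness of $W$), and the flux term handled by summation by parts and positivity of the reconstructions. Your observation that the positivity of $\rho\nh$ under the CFL condition is what licenses the $y$-step estimate, and that the genuinely two-dimensional convolution still permits the full energy to be dissipated slice by slice, matches the paper's argument exactly.
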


\begin{proof}
	From step 1, \eqref{eq:S1SplittingX}, it follows that
	\begin{align}
		\sum_{i,j=1}^{2M}(\rho\ij\nh-\rho\ij\n)\xi\ij\nh=-\frac{\Dt}{\Dx}\sum_{i,j=1}^{2M} (F\ihj\nh-F\imhj\nh)\xi\ij\nh,
	\end{align}
	where $\xi\ij\nh=H'(\rho\ij\nh)+V\ij+\sum_{k,l=1}^{2M}W_{i-k,j-l}\rho\kl\nss\Dx\Dy$. As in Section 3, we deduce
	\begin{align}
		\lll E_\Delta(\rho\nh)-E_\Delta (\rho\n)                                                                                                                                                              \\
		 & = \Dx\Dy\left(\sum_{i,j=1}^{2M} H(\rho\ij\nh)+\shiftleft\shiftleft\sum_{i,j=1}^{2M} V\ij\rho\ij\nh+\frac{\Dx\Dy}{2}\shiftleft\shiftleft\sum_{i,j,k,l=1}^{2M}W_{i-k,j-l}\rho\ij\nh\rho\kl\nh\right) \\
		 & \quad -\Dx\Dy\left(\sum_{i,j=1}^{2M} H(\rho\ij\n)+\sum_{i,j=1}^{2M} V\ij\rho\ij\n+\frac{\Dx\Dy}{2}\sum_{i,j,k,l=1}^{2M}W_{i-k,j-l}\rho\ij\n\rho\kl\n\right)                                        \\
		 & = \Dx \Dy\sum_{i,j=1}^{2M}\left( H(\rho\ij\nh)-H(\rho\ij\n)-H'(\rho\ij\nh)(\rho\ij\nh-\rho\ij\n) \right)                                                                                           \\
		 & \quad + \frac{(\Dx\Dy)^2}{2}\sum_{i,j,k,l=1}^{2M}W_{i-k,j-l} \left(\rho\ij\nh\rho\kl\nh-\rho\ij\n\rho\kl\n-2(\rho\ij\nh-\rho\ij\n)\rho\kl\nss \right)                                              \\
		 & \quad - \Dt\Dy \sum_{i,j=1}^{2M} (F\ihj\nh-F\imhj\nh)\xi\ij\nh.
	\end{align}
	The terms involving $W$ are either non-positive or identically zero, depending on the choice of $\rho\nss$, just as in the one-dimensional setting.

	From step 2, \eqref{eq:S1SplittingY}, we deduce
	\begin{align}
		\sum_{i,j=1}^{2M}(\rho\ij\np-\rho\ij\nh)\xi\ij\np=-\frac{\Dt}{\Dy}\sum_{i,j=1}^{2M} (G\ijh\np-G\ijmh\np)\xi\ij\np,
	\end{align}
	where $\xi\ij\np=H'(\rho\ij\np)+V\ij+\sum_{k,l=1}^{2M}W_{i-k,j-l}\rho\kl\nsss\Dx\Dy$, and similarly
	\begin{align}
		\lll E_\Delta(\rho\np)-E_\Delta (\rho\nh)                                                                                                                                                                                                                                                                            \\
		 & = \Dx\Dy\left(\sum_{i,j=1}^{2M} H(\rho\ij\np)+\sum_{i,j=1}^{2M} V\ij\rho\ij\np+\frac{\Dx\Dy}{2}\shiftleft\shiftleft\sum_{i,j,k,l=1}^{2M}W_{i-k,j-l}\rho\ij\np\rho\kl\np\right)                                                                                                                                    \\
		 & \quad - \Dx\Dy\left(\sum_{i,j=1}^{2M} H(\rho\ij\nh)+\shiftleft\shiftleft \sum_{i,j=1}^{2M} V\ij\rho\ij\nh+\frac{\Dx\Dy}{2} \shiftleft\shiftleft\shiftleft\shiftleft\shiftleft\shiftleft \sum_{i,j,k,l=1}^{2M} \shiftleft\shiftleft\shiftleft\shiftleft\shiftleft\shiftleft W_{i-k,j-l}\rho\ij\nh\rho\kl\nh\right) \\
		 & = \Dx\Dy\sum_{i,j=1}^{2M}\left( H(\rho\ij\np)-H(\rho\ij\nh)-H'(\rho\ij\np)(\rho\ij\np-\rho\ij\nh) \right)                                                                                                                                                                                                         \\
		 & \quad + \frac{(\Dx\Dy)^2}{2}\sum_{i,j,k,l=1}^{2M}W_{i-k,j-l} \left(\rho\ij\np\rho\kl\np-\rho\ij\nh\rho\kl\nh-2(\rho\ij\np-\rho\ij\nh)\rho\kl\nsss \right)                                                                                                                                                         \\
		 & \quad - \Dt\Dx\sum_{i,j=1}^{2M} (G\ijh\np-G\ijmh\np)\xi\ij\np.
	\end{align}
	Yet again, the terms involving $W$ are either non-positive or identically zero, depending on the choice of $\rho\nsss$, consistently with the first step. Combining the estimates, we finally conclude
	\begin{align}
		\lll E_\Delta(\rho\np)-E_\Delta (\rho\n)                                                                                 \\
		 & \leq -\Dt\Dy\sum_{i,j=1}^{2M} \xi\ij\nh(F\ihj\nh-F\imhj\nh)-\Dt\Dx\sum_{i,j=1}^{2M} \xi\ij\np(G\ijh\np-G\ijmh\np)     \\
		 & = -\Dt\Dy\sum_{i,j=1}^{2M-1} F\ihj\nh(\xi\ij\nh-\xi\ipj\nh) -\Dt\Dx\sum_{i,j=1}^{2M-1} G\ijh\np(\xi\ij\np-\xi\ijp\np) \\
		 & = -\Dt\Dx\Dy\sum_{i,j=1}^{2M-1} F\ihj\nh u\ihj\nh -\Dt\Dx\Dy\sum_{i,j=1}^{2M-1} G\ijh\np v\ijh\np                     \\
		 & = -\Dt\Dx\Dy\sum_{i,j=1}^{2M-1}(\rho\ij\E\pos{u\ihj\nh}+\rho\ipj\W\neg{u\ihj\nh})u\ihj\nh                             \\
		 & \quad -\Dt\Dx\Dy\sum_{i,j=1}^{2M-1}(\rho\ij\n\pos{v\ijh\np}+\rho\ijp\S\neg{v\ijh\np})v\ijh\np                         \\
		 & \leq -\Dt\Dx\Dy\sum_{i,j=1}^{2M-1} \min(\rho\ij\E,\rho\ipj\W)|u\ihj\nh|^2                                             \\
		 & \quad -\Dt\Dx\Dy\sum_{i,j=1}^{2M-1} \min(\rho\ij\n,\rho\ijp\S)|v\ijh\np|^2 \leq 0,
	\end{align}
	where we employ the summation by parts, the no-flux boundary condition, and the positivity of $\rho\ij$.
\end{proof}

\subsection{Scheme 2 (S2)}

Assume $\rho\ij$ is the cell average on cell $C\ij$; the scheme reads
\begin{itemize}
	\item \textbf{Step 1 --- Evolution in the $x$-direction}
	      \begin{align}\label{eq:S2SplittingX}
		      \begin{split}
			      &\frac{\rho\ij\nh-\rho\ij\n}{\Dt}+\frac{F\ihj\nh-F\imhj\nh}{\Dx}=0,\\
			      &F\ihj\nh=\rho\ij\nh\pos{u\ihj\nh}+\rho\ipj\nh\neg{u\ihj\nh},\\
			      &\pos{u\ihj\nh}=\max(u\ihj\nh,0), \ \ \neg{u\ihj\nh}=\min(u\ihj\nh,0),\\
			      &u\ihj\nh=-\frac{\xi\ipj\nh-\xi\ij\nh}{\Dx}, \quad
			      \\&
			      \xi\ij\nh=H'( \rho\ij\nh)+V\ij+(W\ast\rho\nss)\ij.
		      \end{split}
	      \end{align}
	      Where $\rho\nss$ may be $\rho\n$, $\rho\nh$, or $\prt{\rho\n+\rho\nh}/{2}$.

	\item \textbf{Step 2 --- Evolution in the $y$-direction}
	      \begin{align}\label{eq:S2SplittingY}
		      \begin{split}
			      &\frac{\rho\ij\np-\rho\ij\nh}{\Dt}+\frac{G\ijh\np-G\ijh\np}{\Dy}=0,\\
			      &G\ijh\np=\rho\ij\np\pos{v\ijh\np}+\rho\ijp\np\neg{v\ijh\np},\\
			      &\pos{v\ijh\np}=\max(v\ijh\np,0), \ \ \neg{v\ijh\np}=\min(v\ijh\np,0),\\
			      &v\ijh\np=-\frac{\xi\ijp\np-\xi\ij\np}{\Dy},\quad
			      \\&
			      \xi\ij\np=H'( \rho\ij\np)+V\ij+(W\ast\rho\nsss)\ij.
		      \end{split}
	      \end{align}
	      Where $\rho\nsss$ can be $\rho\nh$, $\rho\np$, or $\prt{\rho\nh+\rho\np}/{2}$.
\end{itemize}

\subsubsection{Positivity Preservation Property}

\begin{theorem}
	Scheme \textup{(\ref{eq:S2SplittingX},~\ref{eq:S2SplittingY})} is unconditionally positivity preserving.
\end{theorem}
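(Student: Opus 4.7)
My plan is to apply the M-matrix argument of \cref{th:S2CFL} independently to each of the two splitting substeps. The key observation is that although the convolution hidden inside $\xi$ couples all grid points globally through the non-linear dependence of $u$ and $v$ on $\rho$, the flux differences in each substep only involve one-dimensional nearest-neighbour stencils; the resulting update systems are therefore tridiagonal in the corresponding splitting direction, and the one-dimensional argument will apply block-by-block.

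First, I would recast \cref{eq:S2SplittingX} in the form $A_j(\rho\nh)\,\rho\nh_{\cdot,j} = \rho\n_{\cdot,j}$ for each fixed $j$; after rearrangement,
\begin{equation*}
\left(1+\frac{\Dt}{\Dx}\pos{u\ihj\nh}-\frac{\Dt}{\Dx}\neg{u\imhj\nh}\right)\rho\ij\nh+\frac{\Dt}{\Dx}\neg{u\ihj\nh}\rho\ipj\nh-\frac{\Dt}{\Dx}\pos{u\imhj\nh}\rho\imj\nh=\rho\ij\n.
\end{equation*}
I would then verify that the entries of $A_j$ display exactly the same sign pattern as in the one-dimensional case: non-positive off-diagonals, strictly positive diagonals, and strict diagonal dominance of $A_j^T$. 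By the argument of \cref{th:S2CFL}, $A_j$ is inverse positive, so $\rho\ij\nh \geq 0$ follows for every $(i,j)$, with no restriction on $\Dt$.

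Next, I would repeat the same manipulation for Step 2, \cref{eq:S2SplittingY}: for each fixed $i$, the update becomes a tridiagonal system in $j$ with identical M-matrix structure, yielding $\rho\ij\np \geq 0$ from $\rho\ij\nh \geq 0$. Composing the two substeps delivers the claimed unconditional positivity.

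The main obstacle is that the matrices $A_j$ depend on $\rho\nh$ through the convolution inside $\xi\nh$, so the systems are genuinely non-linear rather than linear. I would handle this exactly as in \cref{th:S2CFL}: the positivity argument is conditional on the existence of a solution, and any such solution, when plugged back into $A_j$, still yields a matrix with the M-matrix property row-by-row. Hence the one-dimensional conclusion transfers verbatim, and no CFL restriction is incurred by the convolution terms.
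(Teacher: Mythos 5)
Your proposal is correct and follows essentially the same route as the paper: both rewrite each splitting substep as a system $A(\rho)\rho = \rho^{\text{old}}$ whose coefficient matrix has non-positive off-diagonals, positive diagonal, and a strictly diagonally dominant transpose, and then invoke the inverse-positivity (M-matrix) argument of \cref{th:S2CFL} for each substep in turn. The only cosmetic difference is that the paper assembles one global block matrix via the indexing $\tilde\rho_{(j-1)N+i}=\rho\ij$, whereas you exploit the row-by-row decoupling into tridiagonal blocks $A_j$ explicitly; the content is identical.
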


\begin{proof}
	Just as in the proof of \cref{th:S2CFL}, we rewrite the first step \eqref{eq:S2SplittingX} to find
	\begin{align}
		\rho\ij\n & = \left(1+\frac{\Dt}{\Dx}\pos{u\ihj\nh}-\frac{\Dt}{\Dx}\neg{u\imhj\nh}\right)\rho\ij\nh     \\
		          & \quad + \frac{\Dt}{\Dx}\neg{u\ihj\nh}\rho\ipj\nh-\frac{\Dt}{\Dx}\pos{u\imhj\nh}\rho\imj\nh,
	\end{align}
	which may be written as
	\begin{equation}
		A(\tilde\rho\nh)\tilde\rho\nh=\tilde\rho\n, \quad \tilde\rho_{(j-1)N+i}=\rho\ij,
	\end{equation}
	where $A=(a_{ij})$ is an M-matrix. Therefore $\tilde\rho_{i}\nh\geq0$ if $\tilde\rho_{i}\n\geq 0$ for all $i$. Similarly, the second step (\ref{eq:S2SplittingY}) yields
	\begin{align}
		\rho\ij\nh & = \left(1+\frac{\Dt}{\Dy}\pos{v\ijh\np}-\frac{\Dt}{\Dy}\neg{v\ijmh\np}\right)\rho\ij\np     \\
		           & \quad + \frac{\Dt}{\Dy}\neg{v\ijh\np}\rho\ijp\np-\frac{\Dt}{\Dy}\pos{v\ijmh\np}\rho\ijm\np,
	\end{align}
	which can be written as
	\begin{equation}
		A(\tilde\rho\np)\tilde\rho\np=\tilde\rho\nh, \quad \tilde\rho_{(j-1)N+i}=\rho\ij,
	\end{equation}
	where $A=(a_{ij})$ is an M-matrix also. Therefore $\tilde\rho_{i}\np\geq0$ if $\tilde\rho_{i}\nh\geq 0$ for all $i$.
\end{proof}

\subsubsection{Energy Dissipation Property}

\begin{theorem}\label{th:energydissipationS2DS}
	Scheme \textup{(\ref{eq:S2SplittingX},~\ref{eq:S2SplittingY})} satisfies the energy dissipation property unconditionally, i.e., the discrete energy \eqref{eq:energyDiscrete2D} satisfies $E_\Delta(\rho\np)\leq E_\Delta(\rho\n)$, if one of the following conditions holds:
	\begin{enumerate}[(i)]
		\item $\rho\nss=\prt{\rho\n+\rho\nh}/2$ and $\rho\nsss=\prt{\rho\nh+\rho\np}/2$;
		\item $\rho\nss=\rho\n$, $\rho\nsss=\rho\nh$, and the potential $W$ is \revision{negative-definite};
		\item $\rho\nss=\rho\nh$, $\rho\nsss=\rho\np$, and the potential $W$ is \revision{positive-definite}.
	\end{enumerate}
\end{theorem}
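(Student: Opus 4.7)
The plan is to mirror the proof of \cref{th:energydissipationS1DS} while replacing the final dissipation estimate by the argument used in \cref{th:energydissipationS2}; the key observation is that the reconstructed interface values $\rho\ij\E, \rho\ij\W, \rho\ij\N, \rho\ij\S$ of scheme S1 are now replaced directly by $\rho\ij\nh$ and $\rho\ij\np$, whose positivity is guaranteed unconditionally by the preceding theorem, so no CFL condition is needed.

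First, I would split the increment telescopically as $E_\Delta(\rho\np)-E_\Delta(\rho\n) = [E_\Delta(\rho\nh)-E_\Delta(\rho\n)] + [E_\Delta(\rho\np)-E_\Delta(\rho\nh)]$ and bound each half separately. For the $x$-sweep \eqref{eq:S2SplittingX}, multiply the scheme by $\xi\ij\nh = H'(\rho\ij\nh) + V\ij + (W\ast\rho\nss)\ij$, sum over $i,j$, and isolate the $V$-contribution exactly as in the proof of \cref{th:energydissipationS1DS}. This produces a decomposition $E_\Delta(\rho\nh)-E_\Delta(\rho\n) = I + II + III$ where $I$ involves a convexity defect of $H$, $II$ is the interaction quadratic form in $(\rho\nh - \rho\n)$ paired against $\rho\nss$, and $III$ is the discrete Dirichlet-type term $-\Dt\Dy\sum_{i,j}(F\ihj\nh - F\imhj\nh)\xi\ij\nh$. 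An identical decomposition applies to the $y$-sweep \eqref{eq:S2SplittingY} with $\rho\nh, \rho\np, \rho\nsss$ replacing $\rho\n, \rho\nh, \rho\nss$.

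The term $I$ is controlled by the convexity of $H$ in each sweep, exactly as before. For $II$, the choice of $\rho\nss$ in the $x$-sweep and $\rho\nsss$ in the $y$-sweep dictated by (i)--(iii) makes this contribution either identically zero (case (i), by the algebraic identity used in the one-dimensional implicit case) or non-positive through the negative-definite/positive-definite property of $W$ (cases (ii)--(iii)); the argument is word-for-word the corresponding argument in \cref{th:energydissipationS1DS}. For $III$, apply summation by parts under the no-flux boundary conditions to rewrite
\begin{align}
-\Dt\Dy\sum_{i,j=1}^{2M}(F\ihj\nh - F\imhj\nh)\xi\ij\nh
&= -\Dt\Dx\Dy\sum_{i,j=1}^{2M-1} F\ihj\nh u\ihj\nh \\
&= -\Dt\Dx\Dy\sum_{i,j=1}^{2M-1}\prt*{\rho\ij\nh\pos{u\ihj\nh} + \rho\ipj\nh\neg{u\ihj\nh}} u\ihj\nh \\
&\leq -\Dt\Dx\Dy\sum_{i,j=1}^{2M-1}\min(\rho\ij\nh,\rho\ipj\nh)\,\abs{u\ihj\nh}^2 \leq 0,
\end{align}
where the final non-positivity uses the unconditional positivity of $\rho\ij\nh$. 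The analogous manipulation of the $y$-sweep yields $-\Dt\Dx\Dy\sum \min(\rho\ij\np,\rho\ijp\np)|v\ijh\np|^2 \leq 0$ using the unconditional positivity of $\rho\ij\np$.

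Adding the two half-step estimates telescopes to $E_\Delta(\rho\np) - E_\Delta(\rho\n) \leq 0$. The main subtlety, as in \cref{th:energydissipationS1DS}, is choosing $\rho\nss$ and $\rho\nsss$ consistently so that the interaction contributions in the two sweeps both have the right sign simultaneously; this is precisely what the three cases (i)--(iii) of the statement encode, and no CFL condition enters because the positivity argument for S2 is unconditional.
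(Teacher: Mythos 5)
Your proposal is correct and follows essentially the same route as the paper: the paper likewise states that the proof of \cref{th:energydissipationS1DS} carries over verbatim for the $I$ and $II$ terms and only replaces the final flux estimate, substituting the implicit values $\rho\ij\nh,\rho\ipj\nh$ (resp.\ $\rho\ij\np,\rho\ijp\np$) into $F\ihj\nh$ (resp.\ $G\ijh\np$) and invoking their unconditional positivity to conclude $-\Dt\Dx\Dy\sum\min(\rho\ij\nh,\rho\ipj\nh)|u\ihj\nh|^2\leq 0$ without any CFL restriction. No discrepancies to report.
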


\begin{proof}
	The proof of the result in the previous section carries over except for the last part:
	\begin{align}
		\lll E_\Delta(\rho\np)-E_\Delta (\rho\n)                                                                                 \\
		 & \leq -\Dt\Dy\sum_{i,j=1}^{2M} \xi\ij\nh(F\ihj\nh-F\imhj\nh) -\Dt\Dx\sum_{i,j=1}^{2M} \xi\ij\np(G\ijh\np-G\ijmh\np)    \\
		 & = -\Dt\Dy\sum_{i,j=1}^{2M-1} F\ihj\nh(\xi\ij\nh-\xi\ipj\nh) -\Dt\Dx\sum_{i,j=1}^{2M-1} G\ijh\np(\xi\ij\np-\xi\ijp\np) \\
		 & = -\Dt\Dx\Dy\sum_{i,j=1}^{2M-1} F\ihj\nh u\ihj\nh -\Dt\Dx\Dy\sum_{i,j=1}^{2M-1} G\ijh\np v\ijh\np                     \\
		 & = -\Dt\Dx\Dy\sum_{i,j=1}^{2M-1}(\rho\ij\nh\pos{u\ihj\nh}+\rho\ipj\nh\neg{u\ihj\nh})u\ihj\nh                           \\
		 & \quad -\Dt\Dx\Dy\sum_{i,j=1}^{2M-1}(\rho\ij\np\pos{v\ijh\np}+\rho\ijp\np\neg{v\ijh\np})v\ijh\np                       \\
		 & \leq -\Dt\Dx\Dy\sum_{i,j=1}^{2M-1} \min(\rho\ij\nh,\rho\ipj\nh)|u\ihj\nh|^2                                           \\
		 & \quad -\Dt\Dx\Dy\sum_{i,j=1}^{2M-1} \min(\rho\ij\np,\rho\ijp\np)|v\ijh\np|^2 \leq 0.
	\end{align}
\end{proof}
 \section{Higher Dimensions: Sweeping Dimensional Splitting}
\label{sec:2DschemesDSCoupled}

\revisionnote{This section is new}

The schemes presented in \cref{sec:2DschemesDS} offer a dimensionally split generalisation of the S1 and S2 schemes, chosen over their direct generalisation to higher dimensions due to their reduced computational complexity. However, as it was noted at the beginning of the section, this improvement only holds whenever the split schemes decouple \textit{row-by-row}; for instance, in the case of S1, whenever the update \eqref{eq:S1SplittingX} can be performed separately for each value of $j$, and the update \eqref{eq:S1SplittingY} can similarly be computed independently for each $i$.

In pursuit of the energy dissipation property shown in \cref{th:energydissipationS1,th:energydissipationS2,th:energydissipationS1DS,th:energydissipationS2DS}, we have proposed schemes which deal with the interaction terms implicitly. The resulting convolutions of the $\rho\np$ variable leave us with schemes without the desired decoupling. To overcome the computational cost of direct higher-dimensional schemes or coupled dimensionally-split generalisations, we now propose the \textit{sweeping dimensional splitting} framework.

Once again, we describe only the two-dimensional schemes, discretising the mesh as in \cref{sec:2DschemesDS}; the generalisation to three and higher-dimensional settings is immediate.

\subsection{Scheme 1 (S1)}

Assume $\rho\n$ is the approximate solution at time $t\n$. We define the scheme, in its sweeping dimensional splitting form, in two steps: an update in the $x$-direction, followed by the $y$-direction. Each of the updates happens through a sequence of individual row-by-row (or column-by-column) iterations.

\begin{itemize}
	\item \textbf{Evolution in the $x$-direction}

	      We let $\rho\nzero \coloneqq \rho\n$ and define, for $1\leq i, j, r\leq 2M$, the implicit sequence
	      \begin{align}\label{eq:2DS1X}
		      \rho\ij\nr =
		      \begin{cases}\displaystyle
			      \rho\ij\nrm - \frac{\Dt}{\Dx}\prt*{F\ihj\nr - F\imhj\nr} & \text{if }j=r ,
			      \\ \rho\ij\nrm & \text{otherwise} ;
		      \end{cases}
	      \end{align}
	      where
	      \begin{equation}\label{eq:2DS1Xterms}
		      \begin{aligned}
			      F\ihj\nr & = \rho\ij\Er \pos{u\ihj\nr} + \rho\ipj\Wr \neg{u\ihj\nr} ,
			      \\ u\ihj\nr &= - \frac{\xi\ipj\nr - \xi\ij\nr}{\Dx} ,
			      \\ \xi\ij\nr &= H'\prt{\rho\ij\nr} + V\ij + \prt{W\conv\hat{\rho}\nr}\ij ;
			      \\ \rho\ij\Er &= \rho\ij\nr + \frac{\Dx}{2}\rx\ij\nr,
			      \qquad \rho\ij\Wr = \rho\ij\nr - \frac{\Dx}{2}\rx\ij\nr,
			      \\ \rx\n\ij &= \minmod\prt*{
				      \theta \frac{\rho\ipj\n - \rho\ij\n}{\Dx}, \;
				      \frac{\rho\ipj\n - \rho\imj\n}{2\Dx}, \;
				      \theta \frac{\rho\ij\n - \rho\imj\n}{\Dx}
			      };
		      \end{aligned}
	      \end{equation}

	      and the discrete convolution is defined by the sum
	      \begin{align}\label{eq:2Ddiscreteconvolution}
		      \prt{W\conv\hat{\rho}\nr}\ij =
		      \sum_{k,l=1}^{2M} \Wikjl \hat{\rho}\kl\nr \Dx\Dy .
	      \end{align}
	      Furthermore, the convolution variable is given by
	      \begin{align}\label{eq:2DXconvolutionvariable}
		      \hat{\rho}\kl\nr =
		      \begin{cases}
			      \rho\kl\nssr & \text{if }l = r ,
			      \\ \rho\kl\nrm & \text{otherwise} ,
		      \end{cases}
	      \end{align}
	      where once again the choice of $\rho^{**}$ depends on the properties of the interaction potential $W$: throughout the scheme, $\rho\nssr$ is chosen as one of $\rho\nr$, $\rho\nrm$, or $\prt{\rho\nr + \rho\nrm}/{2}$.

	      The last term of the sequence defines the semi-update $\rho\nh\coloneqq \rho\ntwoM$. See \cref{fig:sweeping} for a diagram of the update.

	      \begin{figure}
		      \centering
		      \begin{tikzpicture}[scale=2]
	\newcommand{\gw}{1}
	\newcommand{\gang}{55}
	\newcommand{\gh}{0.8*sin((\gang))}
	\newcommand{\gs}{cos((\gang))}
	\newcommand{\gsep}{-3.5}

	\foreach \g in {0,1} {

			\foreach \l in {0,...,3} {

					\foreach \k in {0,...,2} {

							\draw[thick] ({\k*\gw + \l*\gs}, {\l*\gh + \g*\gsep}) -- ({(\k+1)*\gw + \l*\gs}, {\l*\gh + \g*\gsep});

						}

					\draw[thick, dashed] ({0*\gw + \l*\gs}, {\l*\gh + \g*\gsep}) -- ({(-0.5)*\gw + \l*\gs}, {\l*\gh + \g*\gsep});
					\draw[thick, dashed] ({3*\gw + \l*\gs}, {\l*\gh + \g*\gsep}) -- ({(3.5)*\gw + \l*\gs}, {\l*\gh + \g*\gsep});

				}

			\foreach \k in {0,...,3} {

					\foreach \l in {0,...,2} {

							\draw[thick] ({\k*\gw + \l*\gs}, {\l*\gh + \g*\gsep}) -- ({\k*\gw + (\l+1)*\gs}, {(\l+1)*\gh + \g*\gsep});

						}

					\draw[thick, dashed] ({\k*\gw}, {0 + \g*\gsep}) -- ({\k*\gw+(-0.5)*\gw*\gs}, {(-0.5)*\gh + \g*\gsep});

					\draw[thick, dashed] ({\k*\gw + 3*\gs}, {3*\gh + \g*\gsep}) -- ({\k*\gw+(+0.5)*\gw*\gs + 3*\gs}, {(3.5)*\gh + \g*\gsep});

				}

			\foreach \k in {0,...,2} {
					\foreach \l in {0,2} {

							\fill [ultra thick, pattern=north west lines, pattern color=black, opacity=0.3]
							({\k*\gw + \l*\gs}, {\l*\gh + \g*\gsep}) -- ({(\k+1)*\gw + \l*\gs}, {\l*\gh + \g*\gsep}) -- ({(\k+1)*\gw + (\l+1)*\gs}, {(\l+1)*\gh + \g*\gsep}) -- ({(\k)*\gw + (\l+1)*\gs}, {(\l+1)*\gh + \g*\gsep}) -- cycle;

						}
				}
		}

	\node[] at ({0.5*\gw + 0.5*\gw*\gs}, {0.5*\gh}) {$\rho\imjm\nrm$};
	\node[] at ({1.5*\gw + 0.5*\gw*\gs}, {0.5*\gh}) {$\rho\ijm\nrm$};
	\node[] at ({2.5*\gw + 0.5*\gw*\gs}, {0.5*\gh}) {$\rho\ipjm\nrm$};

	\node[] at ({0.5*\gw + 1.5*\gw*\gs}, {1.5*\gh}) {$\rho\imj\nrm$};
	\node[] at ({1.5*\gw + 1.5*\gw*\gs}, {1.5*\gh}) {$\rho\ij\nrm$};
	\node[] at ({2.5*\gw + 1.5*\gw*\gs}, {1.5*\gh}) {$\rho\ipj\nrm$};

	\node[] at ({0.5*\gw + 2.5*\gw*\gs}, {2.5*\gh}) {$\rho\imjp\nrm$};
	\node[] at ({1.5*\gw + 2.5*\gw*\gs}, {2.5*\gh}) {$\rho\ijp\nrm$};
	\node[] at ({2.5*\gw + 2.5*\gw*\gs}, {2.5*\gh}) {$\rho\ipjp\nrm$};

	\node[] at ({0.5*\gw + 0.5*\gw*\gs}, {0.5*\gh + \gsep}) {$\rho\imjm\nr$};
	\node[] at ({1.5*\gw + 0.5*\gw*\gs}, {0.5*\gh + \gsep}) {$\rho\ijm\nr$};
	\node[] at ({2.5*\gw + 0.5*\gw*\gs}, {0.5*\gh + \gsep}) {$\rho\ipjm\nr$};

	\node[] at ({0.5*\gw + 1.5*\gw*\gs}, {1.5*\gh + \gsep}) {$\rho\imj\nr$};
	\node[] at ({1.5*\gw + 1.5*\gw*\gs}, {1.5*\gh + \gsep}) {$\rho\ij\nr$};
	\node[] at ({2.5*\gw + 1.5*\gw*\gs}, {1.5*\gh + \gsep}) {$\rho\ipj\nr$};

	\node[] at ({0.5*\gw + 2.5*\gw*\gs}, {2.5*\gh + \gsep}) {$\rho\imjp\nr$};
	\node[] at ({1.5*\gw + 2.5*\gw*\gs}, {2.5*\gh + \gsep}) {$\rho\ijp\nr$};
	\node[] at ({2.5*\gw + 2.5*\gw*\gs}, {2.5*\gh + \gsep}) {$\rho\ipjp\nr$};

	\draw[->] ({2.8*\gw + 2.8*\gw*\gs}, {2.8*\gh}) to [bend left=40] node[midway, left=-50pt, fill=white] {$j+1>r,\; \rho\ipjp\nr=\rho\ipjp\nrm$} ({2.8*\gw + 2.8*\gw*\gs}, {2.8*\gh + \gsep}) ;

	\draw[->] ({0.2*\gw + 0.2*\gw*\gs}, {0.2*\gh}) to [bend right=40] node[midway, right=-50pt, fill=white] {$j-1<r,\; \rho\imjm\nr=\rho\imjm\nrm$} ({0.2*\gw + 0.2*\gw*\gs}, {0.2*\gh + \gsep}) ;

	\draw[->] ({1.5*\gw + 1.2*\gw*\gs}, {1.2*\gh}) to [bend right=10] node[midway, fill=white] {$r=j,\quad \rho\ij\nr = \rho\ij\nrm - \frac{\Dt}{\Dx}\prt*{F\ihj\nr - F\imhj\nr}$} ({1.5*\gw + 1.8*\gw*\gs}, {1.8*\gh + \gsep}) ;

\end{tikzpicture} 		      \caption{Detail of the $r=j$ update of the sweeping dimensional splitting scheme. Only the $j=r$ row is updated; the remaining rows, corresponding to $j\neq r$, are left unchanged by this update. The convolution density is treated as in the one-dimensional setting in the $j=r$ row, using a suitable choice to guarantee energy dissipation: $\hat{\rho}\kl\nr = \rho\kl\nssr$; elsewhere, the most recent value of the density is used instead: $\hat{\rho}\kl\nr = \rho\kl\nrm$.}
		      \label{fig:sweeping}
	      \end{figure}
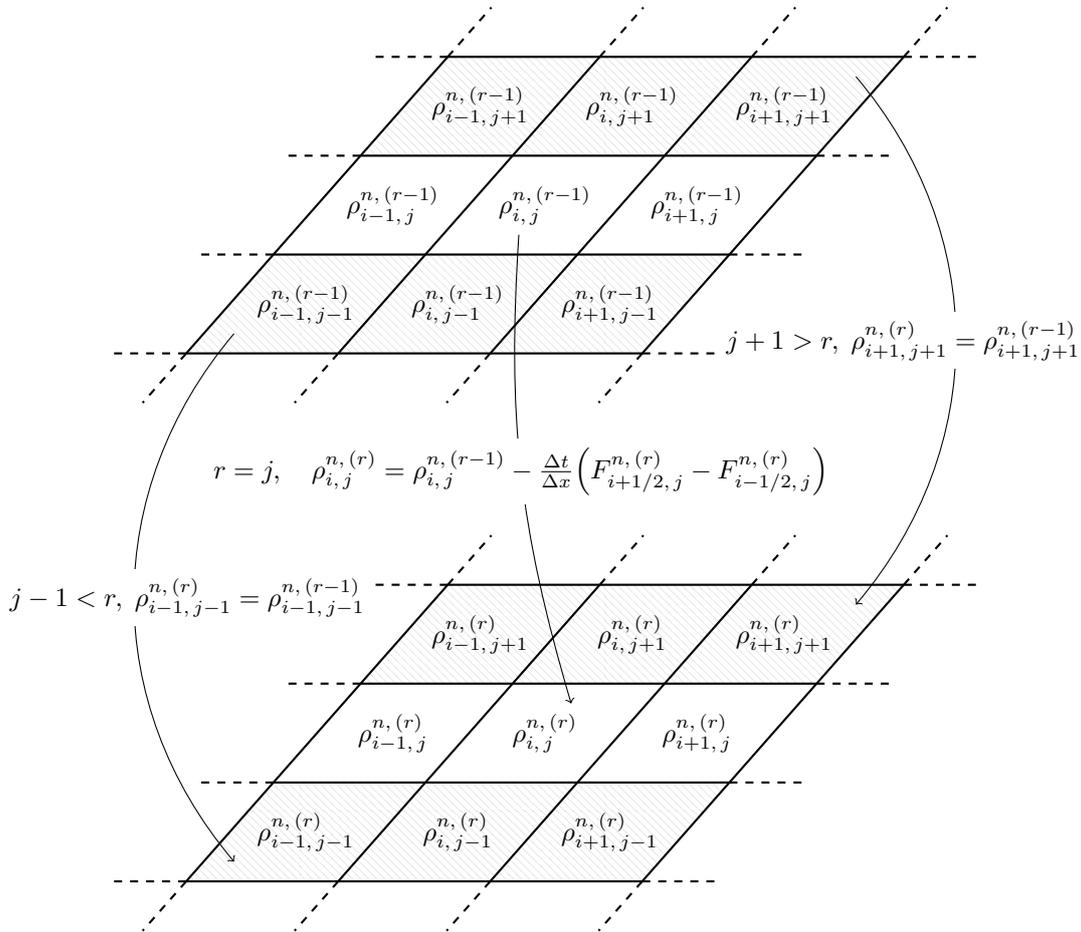

	\item \textbf{Evolution in the $y$-direction}

	      We now let $\rho\nhzero \coloneqq \rho\nh$ and define the analogous sequence
	      \begin{align}\label{eq:2DS1Y}
		      \rho\ij\nhr =
		      \begin{cases}\displaystyle
			      \rho\ij\nhrm - \frac{\Dt}{\Dy}\prt*{G\ijh\nhr - G\ijmh\nhr} & \text{if }i=r ,
			      \\ \rho\ij\nhrm & \text{otherwise} ;
		      \end{cases}
	      \end{align}
	      where
	      \begin{equation}\label{eq:2DS1Yterms}
		      \begin{aligned}
			      G\ijh\nhr & = \rho\ij\Nr \pos{v\ijh\nhr} + \rho\ijp\Sr \neg{v\ijh\nhr} ,
			      \\ v\ijh\nhr &= - \frac{\xi\ijp\nhr - \xi\ij\nhr}{\Dy} ,
			      \\ \xi\ij\nhr &= H'\prt{\rho\ij\nhr} + V\ij + \prt{W\conv\doublehat{\rho}\nhr}\ij ;
			      \\ \rho\ij\Nr &= \rho\ij\nhr + \frac{\Dy}{2}\ry\ij\nhr,
			      \qquad \rho\ij\Sr = \rho\ij\nhr - \frac{\Dy}{2}\ry\ij\nhr,
			      \\ \ry\nh\ij &= \minmod\prt*{
				      \theta \frac{\rho\ijp\nh - \rho\ij\nh}{\Dy}, \;
				      \frac{\rho\ijp\nh - \rho\ijm\nh}{2\Dy}, \;
				      \theta \frac{\rho\ij\nh - \rho\ijm\nh}{\Dy}
			      };
		      \end{aligned}
	      \end{equation}
	      and the discrete convolution is defined as in \cref{eq:2Ddiscreteconvolution}. This convolution variable is given by
	      \begin{align}
		      \doublehat{\rho}\kl\nhr =
		      \begin{cases}
			      \rho\kl\nsssr & \text{if }k = r ,
			      \\ \rho\kl\nhrm & \text{otherwise} ,
		      \end{cases}
	      \end{align}
	      where $\rho\nsssr$ is chosen as one of $\rho\nhr$, $\rho\nhrm$, or $\prt{\rho\nhr + \rho\nhrm}/{2}$.

	      The last term of the sequence defines the update: $\rho\np\coloneqq \rho\nhtwoM$.
\end{itemize}

\subsubsection{Positivity Preservation Property}

\begin{theorem}
	Scheme \eqref{eq:2DS1X}-\eqref{eq:2DS1Y} is positivity preserving provided the following CFL condition is satisfied:
	\begin{equation}\label{eq:S2CFLDSCoupled}
		\Dt \leq \frac{1}{2} \min\set*{
			\frac{\Dx}{
				\max\ij\set*{\pos{u\ihj\nj}, -\neg{u\ihj\nj}}
			}, \; \frac{\Dy}{
				\max\ij\set*{\pos{v\ijh\nhi}, -\neg{v\ijh\nhi}}
			}}.
	\end{equation}
\end{theorem}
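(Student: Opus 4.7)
The plan is to proceed by induction on the sweep index $r$, mirroring the arguments of \cref{th:S1CFL} (one dimension) and of the splitting theorem in \cref{sec:2DschemesDS}. Because the $x$-sweep \eqref{eq:2DS1X} and the $y$-sweep \eqref{eq:2DS1Y} are structurally identical, it suffices to establish positivity of the sequence $\prt*{\rho\nr}_{r=0}^{2M}$ using the first half of the CFL bound \eqref{eq:S2CFLDSCoupled}; the conclusion $\rho\np\geq 0$ then follows from the corresponding treatment of the $y$-sweep. The base case $\rho\nzero = \rho\n \geq 0$ holds by hypothesis.

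For the inductive step, assume $\rho\ij\nrm \geq 0$ for all $i,j$. By \eqref{eq:2DS1X}, every row $j\neq r$ is left untouched, so $\rho\ij\nr = \rho\ij\nrm \geq 0$ trivially. For the row $j=r$, the update takes the form of a one-dimensional scheme, with the coupling to the rest of the grid entering only through the velocity $u\nr$ (via the convolution inside $\xi$). Crucially, iterations $1,\ldots,r-1$ do not alter row $r$, so the cell averages satisfy $\rho\ir\nrm = \rho\ir\n$, and the minmod reconstructions $\rho\ir\Er, \rho\ir\Wr$ coincide with those built from $\rho\n$; by the inductive hypothesis and the standard $\theta = 2$ minmod property, these reconstructions are non-negative. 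Expanding the flux difference in \eqref{eq:2DS1X}--\eqref{eq:2DS1Xterms} and pairing terms as in the proof of \cref{th:S1CFL} gives
\begin{align}
	\rho\ir\nr
	&= \frac{\Dt}{\Dx}\pos{u\imhr\nr}\rho_{i-1,r}\Er
	+ \prt*{\frac{1}{2} - \frac{\Dt}{\Dx}\pos{u\ihr\nr}}\rho\ir\Er \\
	&\quad + \prt*{\frac{1}{2} + \frac{\Dt}{\Dx}\neg{u\imhr\nr}}\rho\ir\Wr
	- \frac{\Dt}{\Dx}\neg{u\ihr\nr}\rho_{i+1,r}\Wr ,
\end{align}
and the bound \eqref{eq:S2CFLDSCoupled}, evaluated at $r=j$ (so that $u\ihj\nj \equiv u\ihr\nr$ are precisely the velocities driving this row), renders all four coefficients non-negative. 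This closes the induction and yields $\rho\nh = \rho\ntwoM \geq 0$. An analogous induction on \eqref{eq:2DS1Y}, with only column $i=r$ touched at each iteration, the $\ry$-reconstructions built from $\rho\nh$, and the second half of \eqref{eq:S2CFLDSCoupled} invoked, delivers $\rho\np \geq 0$.

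The delicate point to navigate is the implicit coupling: both the velocities $u\nr$ and, through $\xi\nr$, the convolution terms depend non-linearly on the unknown $\rho\nr$, and the interaction potential $W$ ties row $r$ to every other row. The decomposition above finesses this obstacle because the signs of its four coefficients depend only on the CFL bound and not on the actual values of $u\nr$, while the reconstructed inputs $\rho_{i\pm 1, r}\Er, \rho\ir\Er, \rho\ir\Wr$ are determined from row $r$ alone and are therefore known non-negative quantities prior to solving the implicit system.
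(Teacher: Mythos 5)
Your proposal is correct and follows essentially the same route as the paper: the paper's proof simply invokes the one-dimensional result (\cref{th:S1CFL}) once per row and column update, noting that each application imposes one of the CFL conditions whose combination is \eqref{eq:S2CFLDSCoupled}. You merely make explicit what the paper leaves implicit — the induction over the sweep index, the fact that untouched rows carry $\rho\ir\nrm=\rho\ir\n$, and the observation that the implicit convolution coupling is harmless because the convex-combination coefficients' signs depend only on the CFL bound — so no substantive difference arises.
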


\begin{proof}
	Using \cref{th:S1CFL}, we recover a sequence of CFL conditions for the positivity of each of the updates of \eqref{eq:2DS1X}:
	\begin{equation}
		\Dt\leq \frac{\Dx}{
			2\max\i\set*{\pos{u\ihj\nj}, -\neg{u\ihj\nj}}
		},\quad\textrm{for each }j.
	\end{equation}
	Similarly, the updates \eqref{eq:2DS1Y} require the sequence of conditions
	\begin{equation}
		\Dt\leq \frac{\Dx}{
			2\max\j\set*{\pos{v\ijh\nhi}, -\neg{v\ijh\nhi}}
		},\quad\textrm{for each }i.
	\end{equation}
	The combination of all of these conditions yields the result.
\end{proof}

\subsubsection{Energy Dissipation Property}

\begin{theorem}
	Under the CFL condition \eqref{eq:S2CFLDSCoupled}, scheme \eqref{eq:2DS1X}-\eqref{eq:2DS1Y} dissipates the discrete energy \eqref{eq:energyDiscrete2D}:
	\begin{align}
		E_\Delta\prt{\rho\np}\leq E_\Delta\prt{\rho\n} ,
	\end{align}
	for a suitable choice of convolution variables:
	\begin{enumerate}[(i)]
		\item $\rho\nssr = \rho\nr$ and $\rho\nsssr = \rho\nhr$ for a negative-definite potential $W$;
		\item $\rho\nssr = \rho\nrm$ and $\rho\nsssr = \rho\nhrm$ for a positive-definite potential $W$;
		\item $\rho\nssr = \prt{\rho\nr + \rho\nrm}/{2}$ and $\rho\nsssr = \prt{\rho\nhr + \rho\nhrm}/{2}$ for any $W$.
	\end{enumerate}
\end{theorem}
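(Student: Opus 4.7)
The strategy is a telescoping argument over the two directional sweeps, reducing each of the $2M$ row (respectively column) updates to the one-dimensional dissipation analysis developed in \cref{th:energydissipationS1}. My plan is to show, for each $r=1,\ldots,2M$ in the x-sweep, that $E_\Delta(\rho\nr)\leq E_\Delta(\rho\nrm)$; telescoping and using $\rho\nh=\rho\ntwoM$ then gives $E_\Delta(\rho\nh)\leq E_\Delta(\rho\n)$, and an identical argument for the y-sweep yields $E_\Delta(\rho\np)\leq E_\Delta(\rho\nh)$, whence the statement.

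For the single row update $\rho\nrm\to\rho\nr$, write $\delta\ij = \rho\ij\nr - \rho\ij\nrm$, which by \eqref{eq:2DS1X} vanishes unless $j=r$. Multiplying \eqref{eq:2DS1X} by $\xi\ij\nr$ and summing, then expanding $E_\Delta(\rho\nr)-E_\Delta(\rho\nrm)$ and adding/subtracting $H'(\rho\ij\nr)\delta\ij$ as in the proof of \cref{th:energydissipationS1}, produces a decomposition $I_r+II_r+III_r$. The entropy piece satisfies $I_r\leq 0$ by convexity of $H$; the flux piece reduces, after summation by parts along row $r$, the CFL \eqref{eq:S2CFLDSCoupled}, and positivity of $\rho\ij\Er,\rho\ij\Wr$, to
\begin{equation}
III_r = -\Dt\Dx\Dy\sum_i \min\set{\rho\ir\Er,\rho\ipr\Wr}\abs{u\ihr\nr}^2 \leq 0.
\end{equation}

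The technical heart of the argument is the control of the residual $II_r$, which measures the discrepancy between the interaction-energy change and the interaction contribution to $\xi\nr$. Because $\delta\ij$ is supported on row $r$ and $W$ is symmetric, expanding the bilinear form produces off-row cross contributions $\sum_{k,l\neq r}W_{i-k,r-l}\rho\kl\nrm\delta\ir$ plus a row-restricted self-interaction $\tfrac{1}{2}\sum_{i,k}W_{i-k,0}\delta\ir\delta\kr$. The convolution variable $\hat{\rho}\nr$ from \eqref{eq:2DXconvolutionvariable} is engineered so that its off-row values (frozen at $\rho\kl\nrm$ for $l\neq r$) cancel the off-row cross contributions exactly, leaving
\begin{equation}
II_r = \Dx^2\Dy^2\sum_{i,k}W_{i-k,0}(\rho\kr\nrm - \rho\kr\nssr)\delta\ir + \frac{\Dx^2\Dy^2}{2}\sum_{i,k}W_{i-k,0}\delta\ir\delta\kr.
\end{equation}
Substituting the three admissible choices of $\rho\nssr$ then produces either $II_r=0$ (symmetric average) or $II_r = \pm\tfrac{1}{2}\Dx^2\Dy^2\sum_{i,k}W_{i-k,0}\delta\ir\delta\kr$; since $\delta\ij$ is supported on row $r$, this residual form is exactly the restriction of the two-dimensional positive/negative-definite quadratic form of $W$ to densities supported on that row, and the sign hypothesis on $W$ paired with each choice of $\rho\nssr$ forces $II_r\leq 0$.

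The main obstacle is the bookkeeping behind this row-wise cancellation: the semi-implicit freezing in $\hat{\rho}\nr$ of all rows $l\neq r$ at their most recent value is precisely what allows the off-row cross terms to cancel between the interaction-energy change and the interaction contribution to $\xi\nr$. A fully implicit convolution would retain uncontrolled cross contributions from other rows and break the row-wise telescoping. The y-sweep is handled identically, acting on $\rho\nh$ column-by-column with $\doublehat{\rho}\nhr$ and $\rho\nsssr$ in place of $\hat{\rho}\nr$ and $\rho\nssr$.
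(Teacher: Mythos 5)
Your proposal follows the paper's proof essentially step for step: the same telescoping over the $2M$ row and column updates, the same $I+II+III$ decomposition of each single-row energy increment, the same cancellation of the off-row cross terms against the rows of the convolution variable frozen at $\rho\kl\nrm$, and the same reduction of the interaction residual to a row-restricted analogue of the one-dimensional term. Your explicit formula for $II_r$ is correct, and the flux and entropy pieces are handled exactly as in the paper.

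There is, however, one concrete point you gloss over, and it matters. Your closing claim that ``the sign hypothesis on $W$ paired with each choice of $\rho\nssr$ forces $II_r\leq 0$'' is never verified, and if you verify it against your own formula the pairings in items (i) and (ii) of the statement come out reversed. Writing $Q=\sum_{i,k}W_{i-k,0}\,\delta\ir\delta\kr$ for the two-dimensional quadratic form restricted to row $r$, your expression
\begin{equation}
	II_r = \prt{\Dx\Dy}^2\sum_{i,k}W_{i-k,0}\prt{\rho\kr\nrm - \rho\kr\nssr}\delta\ir + \frac{\prt{\Dx\Dy}^2}{2}\sum_{i,k}W_{i-k,0}\,\delta\ir\delta\kr
\end{equation}
gives $II_r=+\tfrac{1}{2}\prt{\Dx\Dy}^2Q$ when $\rho\nssr=\rho\nrm$ and $II_r=-\tfrac{1}{2}\prt{\Dx\Dy}^2Q$ when $\rho\nssr=\rho\nr$. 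Hence the old value $\rho\nrm$ must be paired with a \emph{negative}-definite potential and the new value $\rho\nr$ with a \emph{positive}-definite one --- the opposite of what items (i) and (ii) assert, but consistent with the pairing in \cref{th:energydissipationS1} and in every non-sweeping version of this result. The paper's own proof contains the same swap in its concluding sentence for $II$, so this is best read as a typo in the statement rather than an error in your derivation; still, a complete argument must carry out this last substitution explicitly, precisely because the statement as printed cannot be deduced from your (correct) formula without exchanging the definiteness hypotheses in (i) and (ii).
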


\begin{proof}
	Scheme \eqref{eq:2DS1X}, along the $j=r$ row, reads
	\begin{align}
		\rho\ir\nrm - \frac{\Dt}{\Dx}\prt*{F\ihr\nr - F\imhr\nr}.
	\end{align}
	Upon multiplication by $\xi\ij\nr$ and summation over $i$, this yields
	\begin{align}
		\sum_{i=1}^{2M} \xi\ir\nr \prt*{\rho\ir\nr - \rho\ir\nrm}
		 & = - \sum_{i=1}^{2M} \xi\ir\nr \prt*{F\ihr\nr - F\imhr\nr} \frac{\Dt}{\Dx} .
	\end{align}
	Substituting $\xi\ir\nr$ as defined in \cref{eq:2DS1Xterms}, we obtain the identity
	\begin{align}
		\sum_{i=1}^{2M} V\ir\nr \prt*{\rho\ir\nr - \rho\ir\nrm} & = - \sum_{i=1}^{2M} \xi\ir\nr \prt*{F\ihr\nr - F\imhr\nr} \frac{\Dt}{\Dx}                                 \\
		                                                        & \quad - \sum_{i=1}^{2M} H'\prt{\rho\ir\nr} \prt*{\rho\ir\nr - \rho\ir\nrm}                                \\
		                                                        & \quad - \sum_{i=1}^{2M} \sum_{k,l=1}^{2M} \Wikrl \prt*{\rho\ir\nr - \rho\ir\nrm} \hat{\rho}\kl\nr \Dx\Dy.
		\label{eq:2Dpotentialidentity}
	\end{align}
	Considering now the definition of the discrete energy, we compute the difference
	\begin{align}
		\lll E_\Delta\prt{\rho\nr} - E_\Delta\prt{\rho\nrm}                                                                                                                  \\
		 & = \sum_{i,j=1}^{2M} \prt*{H\prt{\rho\ij\nr} - H\prt{\rho\ij\nrm}} \Delta x\Delta y \quad + \sum_{i,j=1}^{2M} \prt*{\rho\ij\nr - \rho\ij\nrm}V\ij \Delta x\Delta y \\
		 & \quad + \sum_{i,j=1}^{2M} \sum_{k,l=1}^{2M} \Wikjl \prt*{\rho\ij\nr \rho\kl\nr - \rho\ij\nrm \rho\kl\nrm} \frac{\prt{\Delta x\Delta y}^2}{2},
	\end{align}
	using the fact that $\rho\ij\nr = \rho\ij\nrm$ whenever $j\neq r$. Substituting \cref{eq:2Dpotentialidentity}, the difference becomes
	\begin{align}
		\lll E_\Delta\prt{\rho\nr} - E_\Delta\prt{\rho\nrm}                                                                                             \\
		 & = \sum_{i=1}^{2M} \prt*{H\prt{\rho\ir\nr} - H\prt{\rho\ir\nrm} - H'\prt{\rho\ir\nr} \prt{\rho\ir\nr - \rho\ir\nrm}}                          \\
		 & \quad + \sum_{i,j=1}^{2M} \sum_{k,l=1}^{2M} \Wikjl \prt*{\rho\ij\nr \rho\kl\nr - \rho\ij\nrm \rho\kl\nrm} \frac{\prt{\Delta x\Delta y}^2}{2} \\
		 & \quad - \sum_{i=1}^{2M} \sum_{k,l=1}^{2M} \Wikrl \prt*{\rho\ir\nr - \rho\ir\nrm} \hat{\rho}\kl\nr \prt{\Delta x\Delta y}^2                   \\
		 & \quad - \sum_{i=1}^{2M} \xi\ir\nr \prt*{F\ihr\nr - F\imhr\nr} \Dt\Dy                                                                         \\
		 & = I + II + III,
	\end{align}
	where
	\begin{align}
		I   & \coloneqq \sum_{i=1}^{2M} \prt*{H\prt{\rho\ir\nr} - H\prt{\rho\ir\nrm} - H'\prt{\rho\ir\nr} \prt{\rho\ir\nr - \rho\ir\nrm}}                    \\
		II  & \coloneqq \sum_{i,j=1}^{2M} \sum_{k,l=1}^{2M} \Wikjl \prt*{\rho\ij\nr \rho\kl\nr - \rho\ij\nrm \rho\kl\nrm} \frac{\prt{\Delta x\Delta y}^2}{2} \\
		    & \quad - \sum_{i=1}^{2M} \sum_{k,l=1}^{2M} \Wikrl \prt*{\rho\ir\nr - \rho\ir\nrm} \hat{\rho}\kl\nr \prt{\Delta x\Delta y}^2                     \\
		III & \coloneqq - \sum_{i=1}^{2M} \xi\ir\nr \prt*{F\ihr\nr - F\imhr\nr} \Dt\Dy.
	\end{align}

	The first term, as in the one-dimensional case, is bounded using the convexity of $H$:
	\begin{align}
		H\prt{\rho\ir\nr} - H\prt{\rho\ir\nrm} - H'\prt{\rho\ir\nr} \prt{\rho\ir\nr - \rho\ir\nrm} \leq 0,
	\end{align}
	hence $I\leq 0$. The second term:
	\begin{align}
		\frac{II}{\curlyC}
		 & = \sum_{i,j=1}^{2M} \sum_{k,l=1}^{2M} \Wikjl \prt*{\rho\ij\nr \rho\kl\nr - \rho\ij\nrm \rho\kl\nrm}                                                          \\
		 & \quad - 2 \sum_{i=1}^{2M} \sum_{k,l=1}^{2M} \Wikrl \prt*{\rho\ir\nr - \rho\ir\nrm} \hat{\rho}\kl\nr                                                          \\
		 & = \sum_{\substack{i,j=1                                                                                                                                      \\ j\neq r}}^{2M} \sum_{k,l=1}^{2M} \Wikjl \rho\ij\nrm \prt*{\rho\kl\nr - \rho\kl\nrm} \\
		 & \quad + \sum_{i=1}^{2M} \sum_{k,l=1}^{2M} \Wikrl \prt*{\rho\ir\nr \rho\kl\nr - \rho\ir\nrm \rho\kl\nrm - 2 \prt{\rho\ir\nr - \rho\ir\nrm} \hat{\rho}\kl\nr},
	\end{align}
	having used $\rho\ij\nr = \rho\ij\nrm$ for $j\neq r$ again, and where $\curlyC =  \prt{\Delta x\Delta y}^2/2$. Splitting the sums further:
	\begin{align}
		\frac{II}{\curlyC} & = \sum_{\substack{i,j=1                                                                                                                                    \\ j\neq r}}^{2M} \sum_{k}^{2M} \Wikjr \rho\ij\nrm \prt*{\rho\kr\nr - \rho\kr\nrm} \\
		                   & \quad + \sum_{i=1}^{2M} \sum_{\substack{k,l=1                                                                                                              \\ l\neq r}}^{2M} \Wikrl \prt*{\prt{\rho\ir\nr - \rho\ir\nrm} \rho\kl\nrm - 2 \prt{\rho\ir\nr - \rho\ir\nrm} \hat{\rho}\kl\nr} \\
		                   & \quad + \sum_{i=1}^{2M} \sum_{k=1}^{2M} \Wikrr \prt*{\rho\ir\nr \rho\kr\nr - \rho\ir\nrm \rho\kr\nrm - 2 \prt{\rho\ir\nr - \rho\ir\nrm} \hat{\rho}\kr\nr}  \\
		                   & = 2 \sum_{i=1}^{2M} \sum_{\substack{k,l=1                                                                                                                  \\ l\neq r}}^{2M} \Wikrl \prt*{\prt{\rho\ir\nr - \rho\ir\nrm} \rho\kl\nrm - \prt{\rho\ir\nr - \rho\ir\nrm} \hat{\rho}\kl\nr} \\
		                   & \quad + \sum_{i=1}^{2M} \sum_{k=1}^{2M} \Wikrr \prt*{\rho\ir\nr \rho\kr\nr - \rho\ir\nrm \rho\kr\nrm - 2 \prt{\rho\ir\nr - \rho\ir\nrm} \hat{\rho}\kr\nr},
	\end{align}
	by swapping $i$ with $k$, taking $j=l$ in the first sum, and using the symmetry of the interaction potential: $ \Wkirl = \Wikrl$. Substituting the definition of the convolution variable, \cref{eq:2DXconvolutionvariable}, the expression reduces to
	\begin{align}
		\sum_{i=1}^{2M} \sum_{k=1}^{2M} \Wikrr \prt*{\rho\ir\nr \rho\kr\nr - \rho\ir\nrm \rho\kr\nrm - 2 \prt{\rho\ir\nr - \rho\ir\nrm} \rho\kl\nssr},
	\end{align}
	because the first sum is identically zero. The recovered summand is analogous to the one which appears in the one-dimensional energy dissipation result: it is controlled through the choice of $\rho\nssr$. Selecting $\rho\nssr = \rho\nr$ (respectively $\rho\nssr = \rho\nrm$) for a negative-definite (resp. positive-definite) potential $W$ shows $II\leq 0$; letting $\rho\nssr = \prt{\rho\nr + \rho\nrm}/{2}$ instead yields $II\equiv 0$, regardless of $W$.

	The discrete energy difference reduces thus:
	\begin{align}
		\lll E_\Delta\prt{\rho\nr} - E_\Delta\prt{\rho\nrm}                                                       \\
		 & \leq - \sum_{i=1}^{2M} \xi\ir\nr \prt*{F\ihr\nr - F\imhr\nr} \Dt\Dy                                    \\
		 & = - \sum_{i=1}^{2M-1} F\ihr\nr \prt*{\xi\ir\nr - \xi\ipr\nr} \Dt\Dy                                    \\
		 & = - \sum_{i=1}^{2M-1} F\ihr\nr u\ihr\nr \Dt\Dx\Dy                                                      \\
		 & = - \sum_{i=1}^{2M-1} \prt*{\rho\ir\Er \pos{u\ihj\nr} + \rho\ipr\Wr \neg{u\ihr\nr}} u\ihr\nr \Dt\Dx\Dy \\
		 & \leq - \sum_{i=1}^{2M-1} \min\prt*{\rho\ir\Er, \rho\ipr\Wr} \abs{u\ihr\nr}^2 \Dt\Dx\Dy \leq 0.
	\end{align}
	A similar argument yields an estimate for the update in the $y$-direction:
	\begin{align}
		\lll E_\Delta\prt{\rho\nhr} - E_\Delta\prt{\rho\nhrm}                                              \\
		 & \leq - \sum_{j=1}^{2M-1} \min\prt*{\rho\rj\Nr, \rho\rjp\Sr} \abs{v\rjh\nhr}^2 \Dt\Dx\Dy \leq 0.
	\end{align}
	Finally, we observe
	\begin{align}
		\lll E_\Delta\prt{\rho\np} - E_\Delta\prt{\rho\n}                                                                                                 \\
		 & = \prt*{E_\Delta\prt{\rho\np} - E_\Delta\prt{\rho\nh}} + \prt*{E_\Delta\prt{\rho\nh} - E_\Delta\prt{\rho\n}}                                   \\
		 & = \sum_{r=1}^{2M} \brk*{\prt{E_\Delta\prt{\rho\nhr} - E_\Delta\prt{\rho\nhrm}} + \prt{E_\Delta\prt{\rho\nr} - E_\Delta\prt{\rho\nrm}}} \leq 0,
	\end{align}
	because all the summands are non-positive, concluding the proof.
\end{proof}

\subsection{Scheme 2 (S2)}

Assume $\rho\n$ is the approximate solution at time $t\n$. The scheme is again defined through two sequences of individual row-by-row and column-by-column iterations.
\begin{itemize}
	\item \textbf{Evolution in the $x$-direction}

	      We let $\rho\nzero \coloneqq \rho\n$ and define, for $1\leq i, j, r\leq 2M$, the implicit sequence
	      \begin{align}\label{eq:2DS2X}
		      \rho\ij\nr =
		      \begin{cases}\displaystyle
			      \rho\ij\nrm - \frac{\Dt}{\Dx}\prt*{F\ihj\nr - F\imhj\nr} & \text{if }j=r ,
			      \\ \rho\ij\nrm & \text{otherwise} ;
		      \end{cases}
	      \end{align}
	      where
	      \begin{equation}\label{eq:2DS2Xterms}
		      \begin{aligned}
			      F\ihj\nr & = \rho\ij\nr \pos{u\ihj\nr} + \rho\ipj\nr \neg{u\ihj\nr} ,
			      \\ u\ihj\nr &= - \frac{\xi\ipj\nr - \xi\ij\nr}{\Dx} ,
			      \\ \xi\ij\nr &= H'\prt{\rho\ij\nr} + V\ij + \prt{W\conv\hat{\rho}\nr}\ij ;
		      \end{aligned}
	      \end{equation}
	      and the discrete convolution is defined by the sum
	      \begin{align}\label{eq:2DS2discreteconvolution}
		      \prt{W\conv\hat{\rho}\nr}\ij =
		      \sum_{k,l=1}^{2M} \Wikjl \hat{\rho}\kl\nr \Dx\Dy .
	      \end{align}
	      Furthermore, the convolution variable is given by
	      \begin{align}\label{eq:2DS2Xconvolutionvariable}
		      \hat{\rho}\kl\nr =
		      \begin{cases}
			      \rho\kl\nssr & \text{if }l = r ,
			      \\ \rho\kl\nrm & \text{otherwise} ,
		      \end{cases}
	      \end{align}
	      where once again the choice of $\rho^{**}$ depends on the properties of the interaction potential $W$: throughout the scheme, $\rho\nssr$ is chosen as one of $\rho\nr$, $\rho\nrm$, or $\prt{\rho\nr + \rho\nrm}/{2}$.

	      The last term of the sequence defines the semi-update $\rho\nh\coloneqq \rho\ntwoM$.

	\item \textbf{Evolution in the $y$-direction}

	      We now let $\rho\nhzero \coloneqq \rho\nh$ and define the analogous sequence
	      \begin{align}\label{eq:2DS2Y}
		      \rho\ij\nhr =
		      \begin{cases}\displaystyle
			      \rho\ij\nhrm - \frac{\Dt}{\Dy}\prt*{G\ijh\nhr - G\ijmh\nhr} & \text{if }i=r ,
			      \\ \rho\ij\nhrm & \text{otherwise} ;
		      \end{cases}
	      \end{align}
	      where
	      \begin{equation}\label{eq:2DS2Yterms}
		      \begin{aligned}
			      G\ijh\nhr & = \rho\ij\nhr \pos{v\ijh\nhr} + \rho\ijp\nhr \neg{v\ijh\nhr} ,
			      \\ v\ijh\nhr &= - \frac{\xi\ijp\nhr - \xi\ij\nhr}{\Dy} ,
			      \\ \xi\ij\nhr &= H'\prt{\rho\ij\nhr} + V\ij + \prt{W\conv\doublehat{\rho}\nhr}\ij ;
		      \end{aligned}
	      \end{equation}
	      and the discrete convolution is defined as in \cref{eq:2DS2discreteconvolution}. This convolution variable is given by
	      \begin{align}
		      \doublehat{\rho}\kl\nhr =
		      \begin{cases}
			      \rho\kl\nsssr & \text{if }k = r ,
			      \\ \rho\kl\nhrm & \text{otherwise} ,
		      \end{cases}
	      \end{align}
	      where $\rho\nsssr$ is chosen as one of $\rho\nhr$, $\rho\nhrm$, or $\prt{\rho\nhr + \rho\nhrm}/{2}$.

	      The last term of the sequence defines the update: $\rho\np\coloneqq \rho\nhtwoM$.
\end{itemize}

\subsubsection{Positivity Preservation Property}

\begin{theorem}
	Scheme \eqref{eq:2DS2X}-\eqref{eq:2DS2Y} is unconditionally positivity-preserving: if $\rho\i\n\geq 0$ for all $i$, then $\rho\i\np\geq 0$ for all $i$.
\end{theorem}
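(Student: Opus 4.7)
The plan is to mirror the proof of \cref{th:S2CFL} for the 1D S2 scheme, exploiting the defining feature of the sweeping splitting: each inner iteration modifies only a single row (in the $x$-sweep) or a single column (in the $y$-sweep). I would proceed by nested induction---first on the inner index $r$ in the $x$-sweep, then again on $r$ in the $y$-sweep---starting from the hypothesis $\rho\n \geq 0$.

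For a fixed $r$ in the $x$-sweep, \cref{eq:2DS2X} sets $\rho\ij\nr = \rho\ij\nrm$ for every $j \neq r$, so those cells inherit non-negativity from the previous inner iterate. On the single row $j = r$, the scheme reduces to
\begin{align}
\rho\ir\nr - \frac{\Dt}{\Dx}\prt*{F\ihr\nr - F\imhr\nr} = \rho\ir\nrm, \qquad i = 1, \ldots, 2M,
\end{align}
which is exactly the form of the 1D S2 update. The convolution term embedded in $\xi\ir\nr$ couples this row to the rest of the grid, but by \cref{eq:2DS2Xconvolutionvariable} the contribution of rows $l \neq r$ uses only the already-computed values $\rho\kl\nrm$, acting as a known source that enters $\xi$ alongside $V\ij$; the contribution from $l = r$ enters the non-linear system in precisely the same way as the convolution in the 1D analysis. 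Collecting the row-$r$ equations as $A\tilde{\rho}\nr = \tilde{\rho}\nrm$ with $\tilde{\rho}\i\nr \coloneqq \rho\ir\nr$ yields a tridiagonal matrix $A$ whose sign pattern and column-diagonal-dominance are identical to those in the proof of \cref{th:S2CFL}: the diagonal is $1 + (\Dt/\Dx)(\pos{u\ihr\nr} - \neg{u\imhr\nr}) > 0$, the off-diagonals are non-positive, and column sums of $A$ exceed the diagonal by $1$. Hence $A^T$ satisfies the sufficient conditions for inverse positivity, every entry of $A^{-1}$ is non-negative, and $\tilde{\rho}\nr = A^{-1}\tilde{\rho}\nrm \geq 0$.

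Iterating over $r = 1, \ldots, 2M$ gives $\rho\nh = \rho\ntwoM \geq 0$, and the identical argument applied column-by-column to the $y$-sweep \eqref{eq:2DS2Y} produces $\rho\np \geq 0$. The main obstacle is conceptual rather than algebraic: as in the 1D case, the M-matrix reasoning is conditional on the existence of a solution to each inner implicit non-linear system, echoing the open problem stated after \cref{prop:timediscrete}. Granted such existence, its sign is forced by the M-matrix structure, so the scheme is unconditionally positivity-preserving in the stated sense.
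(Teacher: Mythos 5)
Your proof is correct and takes essentially the same route as the paper's, which simply invokes \cref{th:S2CFL} row-by-row and column-by-column at each inner iteration of the sweep; your elaboration of how the convolution term splits into a known source (the $l\neq r$ rows, frozen at $\rho\nrm$) plus the usual one-dimensional implicit coupling on the active row is a faithful expansion of that one-line argument, including the honest caveat about existence of the non-linear solves. One small wording slip: it is the diagonal entry of $A$ that exceeds the absolute column sum of the off-diagonal entries by $1$ (each full column of $A$ sums to $1$), not the column sum that exceeds the diagonal.
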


\begin{proof}
	Just as in the proof of positivity for S1, we apply the one-dimensional result to each update. Invoking \cref{th:S2CFL} at every step, we learn that $\rho\i\n\geq 0$ for all $i$ implies $\rho\i\np\geq 0$ for all $i$.
\end{proof}

\subsubsection{Energy Dissipation Property}

\begin{theorem}
	Scheme \eqref{eq:2DS2X}-\eqref{eq:2DS2Y} unconditionally dissipates the discrete energy \eqref{eq:energyDiscrete2D}:
	\begin{align}
		E_\Delta\prt{\rho\np}\leq E_\Delta\prt{\rho\n} ,
	\end{align}
	for a choice of convolution variables:
	\begin{enumerate}[(i)]
		\item $\rho\nssr = \rho\nr$ and $\rho\nsssr = \rho\nhr$ for a negative-definite potential $W$;
		\item $\rho\nssr = \rho\nrm$ and $\rho\nsssr = \rho\nhrm$ for a positive-definite potential $W$;
		\item $\rho\nssr = \prt{\rho\nr + \rho\nrm}/{2}$ and $\rho\nsssr = \prt{\rho\nhr + \rho\nhrm}/{2}$ for any $W$.
	\end{enumerate}
\end{theorem}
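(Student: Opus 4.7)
The plan is to mirror the proof of the preceding S1 sweeping theorem, with the only substantial modifications arising from the implicit form of the flux in \eqref{eq:2DS2Xterms}--\eqref{eq:2DS2Yterms} and the unconditional positivity established in the previous theorem.

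I would first focus on a single $x$-sweep at index $r$. Multiplying the update \eqref{eq:2DS2X} along the $j=r$ row by $\xi\ir\nr$, summing over $i$, and then using \eqref{eq:2DS2Xterms} to isolate the contribution of $V\ir$ in the confinement-potential part of the energy, I obtain an identity analogous to \eqref{eq:2Dpotentialidentity}. Substituting this into $E_\Delta\prt*{\rho\nr} - E_\Delta\prt*{\rho\nrm}$, and using the fact that $\rho\ij\nr = \rho\ij\nrm$ for $j\neq r$, decomposes the difference into three parts $I+II+III$ of the same form as in the S1 sweeping proof. Part $I$, which collects the internal-energy contributions, is controlled immediately by the convexity of $H$. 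Part $II$, which involves only the interaction potential, is handled by exactly the same manipulation (splitting the double sums, swapping the indices $i\leftrightarrow k$ using the symmetry of $W$, and applying $\hat{\rho}\kl\nr = \rho\kl\nssr$ only on the diagonal $l=r$) and is non-positive for each of the three stated choices of $\rho\nssr$. These two arguments are independent of whether the flux is treated explicitly or implicitly, so they transfer verbatim from the S1 sweeping proof.

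The genuine difference arises in part $III = -\Dt\Dy\sum_i \xi\ir\nr\prt*{F\ihr\nr - F\imhr\nr}$. Because the flux $F\ihr\nr$ is now built from $\rho\ir\nr$ and $\rho\ipr\nr$ directly rather than from reconstructed values, summation by parts together with the no-flux boundary condition and the definition of $u\ihr\nr$ yields the bound
\begin{align}
III \leq -\Dt\Dx\Dy \sum_{i=1}^{2M-1} \min\prt*{\rho\ir\nr, \rho\ipr\nr}\abs*{u\ihr\nr}^2 \leq 0,
\end{align}
without any CFL restriction, since $\rho\nr\geq 0$ unconditionally by the preceding positivity theorem. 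A completely analogous argument over the $y$-sweeps produces the corresponding bound featuring $\min\prt*{\rho\rj\nhr, \rho\rjp\nhr}$ and $v\rjh\nhr$. Telescoping the non-positive differences $E_\Delta(\rho\nr)-E_\Delta(\rho\nrm)$ for $r=1,\ldots,2M$ in the $x$-direction, and likewise for the $y$-direction, gives $E_\Delta(\rho\np) \leq E_\Delta(\rho\n)$. The main point to watch is the accounting in part $II$: since the sweeping structure updates only one row at a time, the cancellations that collapse the interaction term into a one-dimensional-type expression must be reproduced carefully as in the S1 sweeping proof; once that template is in place, the unconditional nature of the result follows directly from the unconditional positivity.
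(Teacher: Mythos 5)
Your proposal is correct and follows essentially the same route as the paper: the paper likewise reuses the decomposition and the treatment of parts $I$ and $II$ from the S1 sweeping proof verbatim, and modifies only the flux term, bounding it by $-\Dt\Dx\Dy\sum_{i}\min\prt{\rho\ir\nr,\rho\ipr\nr}\abs{u\ihr\nr}^2\leq 0$ using the unconditional positivity, before telescoping over the sweeps. No gaps.
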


\begin{proof}
	The proof of the result in the previous section carries over except for the last part:
	\begin{align}
		\lll E_\Delta\prt{\rho\nr} - E_\Delta\prt{\rho\nrm}                                                       \\
		 & \leq - \sum_{i=1}^{2M} \xi\ir\nr \prt*{F\ihr\nr - F\imhr\nr} \Dt\Dy                                    \\
		 & = - \sum_{i=1}^{2M-1} F\ihr\nr \prt*{\xi\ir\nr - \xi\ipr\nr} \Dt\Dy                                    \\
		 & = - \sum_{i=1}^{2M-1} F\ihr\nr u\ihr\nr \Dt\Dx\Dy                                                      \\
		 & = - \sum_{i=1}^{2M-1} \prt*{\rho\ir\nr \pos{u\ihj\nr} + \rho\ipr\nr \neg{u\ihr\nr}} u\ihr\nr \Dt\Dx\Dy \\
		 & \leq - \sum_{i=1}^{2M-1} \min\prt*{\rho\ir\nr, \rho\ipr\nr} \abs{u\ihr\nr}^2 \Dt\Dx\Dy \leq 0.
	\end{align}
	A similar argument yields an estimate for the update in the $y$-direction:
	\begin{align}
		\lll E_\Delta\prt{\rho\nhr} - E_\Delta\prt{\rho\nhrm}                                                \\
		 & \leq - \sum_{j=1}^{2M-1} \min\prt*{\rho\rj\nhr, \rho\rjp\nhr} \abs{v\rjh\nhr}^2 \Dt\Dx\Dy \leq 0.
	\end{align}
	Finally, we observe
	\begin{align}
		\lll E_\Delta\prt{\rho\np} - E_\Delta\prt{\rho\n}                                                                                                 \\
		 & = \prt*{E_\Delta\prt{\rho\np} - E_\Delta\prt{\rho\nh}} + \prt*{E_\Delta\prt{\rho\nh} - E_\Delta\prt{\rho\n}}                                   \\
		 & = \sum_{r=1}^{2M} \brk*{\prt{E_\Delta\prt{\rho\nhr} - E_\Delta\prt{\rho\nhrm}} + \prt{E_\Delta\prt{\rho\nr} - E_\Delta\prt{\rho\nrm}}} \leq 0,
	\end{align}
	because all the summands are non-positive, concluding the proof.
\end{proof}
 \section{Implementation, Validation and Accuracy of the Schemes}\label{sec:validation}

The following section is concerned with the implementation of the numerical schemes, as well as their validation against equations whose analytical solutions are known. To begin, we validate the order of the schemes by solving the heat and porous medium equations, as well as a non-local analogue of the Fokker-Planck equation, and studying the error against their solutions. Furthermore, we validate the order of convergence to a stationary state on non-linear and non-local Fokker-Planck equations by comparing them against the known convergence rates.

We recall the essential properties of the schemes:
\begin{center}
	\renewcommand*{\arraystretch}{1.3}
	\begin{tabular}{C{0.27\textwidth}C{0.44\textwidth}C{0.19\textwidth}}
		                        & {\scshape Scheme 1 (S1)}                                                                    & {\scshape Scheme 2 (S2)} \\
		\toprule
		Order in time           & First                                                                                       & First                    \\
		Order in space          & Second                                                                                      & First                    \\
		\midrule
		Positivity-preservation & \multirow{2}{*}{$\displaystyle \Dt\leq \frac{\Dx}{2\max\i\set*{(u\ih\np)^+,-(u\ih\np)^-}}$}
		                        & \multirow{2}{*}{Unconditional}                                                                                         \\
		Energy-dissipation      &                                                                                             &                          \\
		\bottomrule
	\end{tabular}
\end{center}

\subsection{Implementation --- A Note on Solutions with Vacuum}

The numerical schemes were implemented using the Julia language \cite{B.E.K+2017}. \revision{The implicit-in-time formulation of S1 and S2 requires the approximation of the solution $\rho\np$ to \eqref{eq:S1} or \eqref{eq:S2}, for which we employ the Newton-Raphson method provided by the \textit{NLsolve} library \cite{NLsolve}}. Throughout Sections \ref{sec:validation} and \ref{sec:experiments} we make the choice $\rho\nss=\prt{\rho\n+\rho\np}/{2}$, which guarantees the dissipation of the discrete energy regardless of the choice of $W$.

Due to the nature of the schemes, special care should be taken when dealing with problems where vacuum is present. While the schemes perform satisfactorily in cases where parts of the solution take arbitrarily small values (the heat equation in Section \ref{sec:heatvalidation}, for instance), they sometimes fail with solutions involving segments which take the exact value of zero. Examples of this are problems with compactly supported initial datum such as the Barenblatt solution for the porous medium equation.

An explicit calculation of the Jacobian matrix of the schemes employed by the Newton solver reveals that certain terms can become ill-posed when $\rho\i=0$ in some cells. In particular, terms involving the partial derivative of $u\ih\np$ with respect to $\rho_{j}\np$ result in the second derivative of the internal energy density, $H''\prt{\rho}$, which can be singular. For instance, this is proportional to $\rho^{m-2}$ for the heat equation or the porous medium equation; under S1, the range $1\leq m<2$ is problematic, whereas S2 handles all cases except $m=1$. \revision{The issue can be easily circumvented by modifying the energy term to include an offset: $H\prt{\hat{\rho}}$, where $\hat{\rho} = \max\prt{\rho, \epsilon}$ and $\epsilon$ is the \textit{machine epsilon}}.

\subsection{heat equation}\label{sec:heatvalidation}

The first validation case is the heat equation $\pt\rho=D\Delta\rho$, i.e.,
\begin{equation}\label{eq:heat}
	H(\rho)=D\prt{\rho\log(\rho)-\rho},\quad
	V(\bx)=0,\quad
	W(\bx)=0,
\end{equation}
for $D>0$. The analytical solution $\rho^*(t,\bx)$ corresponding to a point source is given by the heat kernel
\begin{equation}\label{eq:heatkernel}
	\Phi\prt{t,\bx}=
	\prt{4\pi Dt}^{-\frac{n}{2}}
	\exp\prt*{-\frac{\abs{\bx}^2}{4Dt}}.
\end{equation}

We will solve \eqref{eq:heat} numerically for $D=1$ with initial datum $\rho_0\prt{\bx}=\Phi\prt{2.0,\bx}$ through an interval of time of unit length for various choices of $\Dx$. We will compute the $L_1$ error of the numerical solution $\rho_{\Dx}$ at the final time,
\begin{equation}
	\textrm{Error}(\Dx)=\left\|\rho_{\Dx}(\tfin,\bx)-\rho^*(\tfin,\bx)\right\|_{L_1}.
\end{equation}
The error will then be used to estimate the order of convergence of the scheme
\begin{equation}
	\textrm{Order}(\Dx)=\log_2(\textrm{Error}(2\Dx)/\textrm{Error}(\Dx)).
\end{equation}

The choice of time step is $\Dt=c\Dx$ for the S2 validation, but $\Dt=c\Dx^2$ instead for the S1 validation in order to show second-order convergence in space. The results for the S1 scheme can be found on \cref{tab:OOCHeatEquationSBCH1,tab:OOCHeatEquationSBCH2} for one and two dimensions respectively. The results for S2 follow on \cref{tab:OOCHeatEquationIBCH1,tab:OOCHeatEquationIBCH2}. Good approximations to orders $2$ and $1$ can be seen for S1 and S2 respectively.

\begin{table}[H]
	\centering
	\begin{tabularx}{0.88\textwidth}{XXXX}
		\toprule
		$\Dt$     & $\Dx$    & Error        & Order        \\
		\midrule
		$2^{-4}$  & $2^{-1}$ & 0.0042109083 & ---          \\
		$2^{-6}$  & $2^{-2}$ & 0.0010515212 & 2.0016534660 \\
		$2^{-8}$  & $2^{-3}$ & 0.0002646653 & 1.9902368023 \\
		$2^{-10}$ & $2^{-4}$ & 0.0000662580 & 1.9980028628 \\
		$2^{-12}$ & $2^{-5}$ & 0.0000165759 & 1.9990085352 \\
		$2^{-14}$ & $2^{-6}$ & 0.0000041459 & 1.9993314969 \\
		\bottomrule
	\end{tabularx}
	\caption{Errors and orders of convergence for the solution to the \textbf{heat equation} \eqref{eq:heat} in one dimension with \textbf{S1}. $D=1.0$, $\tini = 2.0$, $\tfin = 3.0$, $L = 15.0$.}
	\label{tab:OOCHeatEquationSBCH1}
\end{table}
 \begin{table}[H]
	\centering
	\begin{tabularx}{0.89\textwidth}{XXXXX}
		\toprule
		$\Dt$     & $\Dx$    & $\Dy$    & Error        & Order        \\
		\midrule
		$2^{-9}$  & $2^{-1}$ & $2^{-1}$ & 0.0289894915 & ---          \\
		$2^{-11}$ & $2^{-2}$ & $2^{-2}$ & 0.0073328480 & 1.9830844898 \\
		$2^{-13}$ & $2^{-3}$ & $2^{-3}$ & 0.0018680584 & 1.9715457943 \\
		$2^{-15}$ & $2^{-4}$ & $2^{-4}$ & 0.0004731995 & 1.9803261620 \\
		\bottomrule
	\end{tabularx}
	\caption{Errors and orders of convergence for the solution to the \textbf{heat equation} \eqref{eq:heat} in two dimensions with \textbf{S1}. $D=1.0$, $\tini = 2.0$, $\tfin = 3.0$, $L = 15.0$.}
	\label{tab:OOCHeatEquationSBCH2}
\end{table}
 
\begin{table}[H]
	\centering
	\begin{tabularx}{0.88\textwidth}{XXXX}
		\toprule
		$\Dt$    & $\Dx$    & Error        & Order        \\
		\midrule
		$2^{-1}$ & $2^{-1}$ & 0.0206792591 & ---          \\
		$2^{-2}$ & $2^{-2}$ & 0.0108726916 & 0.9274753681 \\
		$2^{-3}$ & $2^{-3}$ & 0.0056016868 & 0.9567759114 \\
		$2^{-4}$ & $2^{-4}$ & 0.0028449428 & 0.9774616438 \\
		$2^{-5}$ & $2^{-5}$ & 0.0014335008 & 0.9888569712 \\
		$2^{-6}$ & $2^{-6}$ & 0.0007196730 & 0.9941293288 \\
		\bottomrule
	\end{tabularx}
	\caption{Errors and orders of convergence for the solution to the \textbf{heat equation} \eqref{eq:heat} in one dimension with \textbf{S2}. $D=1.0$, $\tini = 2.0$, $\tfin = 3.0$, $L = 15.0$.}
	\label{tab:OOCHeatEquationIBCH1}
\end{table}
 \begin{table}[H]
	\centering
	\begin{tabularx}{0.89\textwidth}{XXXXX}
		\toprule
		$\Dt$    & $\Dx$    & $\Dy$    & Error        & Order         \\
		\midrule
		$2^{-1}$ & $2^{-1}$ & $2^{-1}$ & 0.0519120967 & ---           \\
		$2^{-2}$ & $2^{-2}$ & $2^{-2}$ & 0.0219888447 & 1.2392989536  \\
		$2^{-3}$ & $2^{-3}$ & $2^{-3}$ & 0.0099177360 & 1.14868907849 \\
		$2^{-4}$ & $2^{-4}$ & $2^{-4}$ & 0.0049797156 & 0.99394747341 \\
		\bottomrule
	\end{tabularx}
	\caption{Errors and orders of convergence for the solution to the \textbf{heat equation} \eqref{eq:heat} in two dimensions with \textbf{S2}. $D=1.0$, $\tini = 2.0$, $\tfin = 3.0$, $L = 15.0$.}
	\label{tab:OOCHeatEquationIBCH2}
\end{table}

\subsection{porous medium equation}\label{sec:PMEvalidation}

To validate a non-linear diffusion setting, we will now consider the porous medium equation $\pt\rho=D\Delta \rho^m$, i.e.,
\begin{equation}\label{eq:porousmedium}
	H(\rho)=\frac{D}{m-1}\rho^m,\quad
	V(\bx)=0,\quad
	W(\bx)=0,
\end{equation}
for $D>0,m>1.$ The Barenblatt solution $\rho^*(t,\bx)$ corresponding to a point source is given by
\begin{equation}\label{eq:PMEkernel}
	\Psi\prt{t,\bx}=
	\frac{1}{t^\alpha}\psi\prt*{\frac{\abs{\bx}}{t^\beta}},
\end{equation}
where $\psi\prt{\xi}=\prt{K-\kappa\xi^2}_{+}^{1/\prt{m-1}}$ for $\alpha=n/\prt{n(m-1)+2}$, $\beta=\alpha/n$, $\gamma=1/(m-1)+n/2$, $\kappa=\beta(m-1)/(2Dm)$
and $\prt{\cdot}_{+}=\max\set{\cdot,0}$. The normalisation constant $K>0$ is related to the total mass $M$ by $M=a\prt{m,n}K^\gamma$, see \cite[Section 17.5]{Vazquez2006}, where
\begin{equation}
	a\prt{m,n}=
	\prt*{
		\frac{\pi\prt{2Dmn}}{\alpha\prt{m-1}}
	}^{\frac{n}{2}}
	\frac{
		\Gamma\prt*{\frac{m}{m-1}}
	}{
		\Gamma\prt*{\frac{m}{m-1}+\frac{n}{2}}
	}
	,
\end{equation}
and $\Gamma$ is the Gamma function.

As before, we will solve \eqref{eq:porousmedium} numerically for $D=1$ with initial datum $\rho_0\prt{\bx}=\Psi\prt{2.0,\bx}$ and estimate the order of the scheme. \crefrange{tab:OOCPorousMediumEquationSBCH11.5}{tab:OOCPorousMediumEquationIBCH23.0} correspond to the cases $m=3/2$, $m=2$ and $m=3$ for the schemes in one and two dimensions. The approximation to the correct orders is fine for $m=3/2$ but worsens for increasing values of $m$ when compared to section \ref{sec:heatvalidation}. This phenomenon is well known in the numerical literature for non-linear diffusion, as the Barenblatt solution and compactly supported solutions in general lose regularity with increasing exponents, see for instance \cite{C.C.H2015}.

\begin{table}[H]
	\centering
	\begin{tabularx}{0.88\textwidth}{XXXX}
		\toprule
		$\Dt$     & $\Dx$    & Error        & Order        \\
		\midrule
		$2^{-2}$  & $2^{-1}$ & 0.0104485202 & ---          \\
		$2^{-4}$  & $2^{-2}$ & 0.0029208065 & 1.8388599282 \\
		$2^{-6}$  & $2^{-3}$ & 0.0007686238 & 1.9260171509 \\
		$2^{-8}$  & $2^{-4}$ & 0.0001964728 & 1.9679481374 \\
		$2^{-10}$ & $2^{-5}$ & 0.0000496005 & 1.9859042948 \\
		$2^{-12}$ & $2^{-6}$ & 0.0000124637 & 1.9926249978 \\
		\bottomrule
	\end{tabularx}
	\caption{Errors and orders of convergence for the solution to the \textbf{porous medium equation} \eqref{eq:porousmedium} with exponent $3/2$ in one dimension with \textbf{S1}. $D=1.0$, $m=1.5$, $\tini = 2.0$, $\tfin = 3.0$, $L = 6.0$.}
	\label{tab:OOCPorousMediumEquationSBCH11.5}
\end{table}
 \begin{table}[H]
	\centering
	\begin{tabularx}{0.88\textwidth}{XXXXX}
		\toprule
		$\Dt$    & $\Dx$    & $\Dy$    & Error        & Order        \\
		\midrule
		$2^{-2}$ & $2^{-1}$ & $2^{-1}$ & 0.0907205567 & ---          \\
		$2^{-4}$ & $2^{-2}$ & $2^{-2}$ & 0.0224679432 & 2.0135614318 \\
		$2^{-6}$ & $2^{-3}$ & $2^{-3}$ & 0.0055641932 & 2.0136236322 \\
		$2^{-8}$ & $2^{-4}$ & $2^{-4}$ & 0.0013852705 & 2.0060048023 \\
		\bottomrule
	\end{tabularx}
	\caption{Errors and orders of convergence for the solution to the \textbf{porous medium equation} \eqref{eq:porousmedium} with exponent $3/2$ in two dimensions with \textbf{S1}. $D=1.0$, $m=1.5$, $\tini = 2.0$, $\tfin = 3.0$, $L = 6.0$.}
	\label{tab:OOCPorousMediumEquationSBCH21.5}
\end{table}
 
\begin{table}[H]
	\centering
	\begin{tabularx}{0.88\textwidth}{XXXX}
		\toprule
		$\Dt$    & $\Dx$    & Error        & Order        \\
		\midrule
		$2^{-1}$ & $2^{-1}$ & 0.0272797400 & ---          \\
		$2^{-2}$ & $2^{-2}$ & 0.0152054561 & 0.8432408083 \\
		$2^{-3}$ & $2^{-3}$ & 0.0081299605 & 0.9032688429 \\
		$2^{-4}$ & $2^{-4}$ & 0.0042207082 & 0.9457632376 \\
		$2^{-5}$ & $2^{-5}$ & 0.0021528301 & 0.9712506201 \\
		$2^{-6}$ & $2^{-6}$ & 0.0010876434 & 0.9850288966 \\
		\bottomrule
	\end{tabularx}
	\caption{Errors and orders of convergence for the solution to the \textbf{porous medium equation} \eqref{eq:porousmedium} with exponent $3/2$ in one dimension with \textbf{S2}. $D=1.0$, $m=1.5$, $\tini = 2.0$, $\tfin = 3.0$, $L = 6.0$.}
	\label{tab:OOCPorousMediumEquationIBCH11.5}
\end{table}
 \begin{table}[H]
	\centering
	\begin{tabularx}{0.88\textwidth}{XXXXX}
		\toprule
		$\Dt$    & $\Dx$    & $\Dy$    & Error        & Order        \\
		\midrule
		$2^{-1}$ & $2^{-1}$ & $2^{-1}$ & 0.1369706965 & ---          \\
		$2^{-2}$ & $2^{-2}$ & $2^{-2}$ & 0.0511753063 & 1.4203475402 \\
		$2^{-3}$ & $2^{-3}$ & $2^{-3}$ & 0.0210733327 & 1.2800293428 \\
		$2^{-4}$ & $2^{-4}$ & $2^{-4}$ & 0.0093923443 & 1.1658612861 \\
		\bottomrule
	\end{tabularx}
	\caption{Errors and orders of convergence for the solution to the \textbf{porous medium equation} \eqref{eq:porousmedium} with exponent $3/2$ in two dimensions with \textbf{S2}. $D=1.0$, $m=1.5$, $\tini = 2.0$, $\tfin = 3.0$, $L = 6.0$.}
	\label{tab:OOCPorousMediumEquationIBCH21.5}
\end{table}
 
\begin{table}[H]
	\centering
	\begin{tabularx}{0.88\textwidth}{XXXX}
		\toprule
		$\Dt$     & $\Dx$    & Error        & Order        \\
		\midrule
		$2^{-2}$  & $2^{-1}$ & 0.0130915415 & ---          \\
		$2^{-4}$  & $2^{-2}$ & 0.0041148325 & 1.6697293690 \\
		$2^{-6}$  & $2^{-3}$ & 0.0009224433 & 2.1573015535 \\
		$2^{-8}$  & $2^{-4}$ & 0.0002336760 & 1.9809505153 \\
		$2^{-10}$ & $2^{-5}$ & 0.0000590647 & 1.9841427616 \\
		$2^{-12}$ & $2^{-6}$ & 0.0000151741 & 1.9606888670 \\
		\bottomrule
	\end{tabularx}
	\caption{Errors and orders of convergence for the solution to the \textbf{porous medium equation} \eqref{eq:porousmedium} with exponent $2$ in one dimension with \textbf{S1}. $D=1.0$, $m=2.0$, $\tini = 2.0$, $\tfin = 3.0$, $L = 6.0$.}
	\label{tab:OOCPorousMediumEquationSBCH12.0}
\end{table}
 \begin{table}[H]
	\centering
	\begin{tabularx}{0.88\textwidth}{XXXXX}
		\toprule
		$\Dt$    & $\Dx$    & $\Dy$    & Error        & Order        \\
		\midrule
		$2^{-2}$ & $2^{-1}$ & $2^{-1}$ & 0.2257693038 & ---          \\
		$2^{-4}$ & $2^{-2}$ & $2^{-2}$ & 0.0669120354 & 1.7545117122 \\
		$2^{-6}$ & $2^{-3}$ & $2^{-3}$ & 0.0190277529 & 1.8141605351 \\
		$2^{-8}$ & $2^{-4}$ & $2^{-4}$ & 0.0051266368 & 1.8920205977 \\
		\bottomrule
	\end{tabularx}
	\caption{Errors and orders of convergence for the solution to the \textbf{porous medium equation} \eqref{eq:porousmedium} with exponent $2$ in two dimensions with \textbf{S1}. $D=1.0$, $m=2.0$, $\tini = 2.0$, $\tfin = 3.0$, $L = 6.0$.}
	\label{tab:OOCPorousMediumEquationSBCH22.0}
\end{table}
 
\begin{table}[H]
	\centering
	\begin{tabularx}{0.88\textwidth}{XXXX}
		\toprule
		$\Dt$    & $\Dx$    & Error        & Order        \\
		\midrule
		$2^{-1}$ & $2^{-1}$ & 0.0332234361 & ---          \\
		$2^{-2}$ & $2^{-2}$ & 0.0186566864 & 0.8325085195 \\
		$2^{-3}$ & $2^{-3}$ & 0.0105248032 & 0.8258995178 \\
		$2^{-4}$ & $2^{-4}$ & 0.0056855204 & 0.8884289410 \\
		$2^{-5}$ & $2^{-5}$ & 0.0029958742 & 0.9243153420 \\
		$2^{-6}$ & $2^{-6}$ & 0.0015486779 & 0.9519399216 \\
		\bottomrule
	\end{tabularx}
	\caption{Errors and orders of convergence for the solution to the \textbf{porous medium equation} \eqref{eq:porousmedium} with exponent $2$ in one dimension with \textbf{S2}. $D=1.0$, $m=2.0$, $\tini = 2.0$, $\tfin = 3.0$, $L = 6.0$.}
	\label{tab:OOCPorousMediumEquationIBCH12.0}
\end{table}
 \begin{table}[H]
	\centering
	\begin{tabularx}{0.88\textwidth}{XXXXX}
		\toprule
		$\Dt$    & $\Dx$    & $\Dy$    & Error        & Order        \\
		\midrule
		$2^{-1}$ & $2^{-1}$ & $2^{-1}$ & 0.2502827038 & ---          \\
		$2^{-2}$ & $2^{-2}$ & $2^{-2}$ & 0.0916007693 & 1.4501269744 \\
		$2^{-3}$ & $2^{-3}$ & $2^{-3}$ & 0.0355814351 & 1.3642350159 \\
		$2^{-4}$ & $2^{-4}$ & $2^{-4}$ & 0.0155759666 & 1.1918030035 \\
		\bottomrule
	\end{tabularx}
	\caption{Errors and orders of convergence for the solution to the \textbf{porous medium equation} \eqref{eq:porousmedium} with exponent $2$ in two dimensions with \textbf{S2}. $D=1.0$, $m=2.0$, $\tini = 2.0$, $\tfin = 3.0$, $L = 6.0$.}
	\label{tab:OOCPorousMediumEquationIBCH22.0}
\end{table}
 
\begin{table}[H]
	\centering
	\begin{tabularx}{0.88\textwidth}{XXXX}
		\toprule
		$\Dt$     & $\Dx$    & Error        & Order        \\
		\midrule
		$2^{-2}$  & $2^{-1}$ & 0.0478347041 & ---          \\
		$2^{-4}$  & $2^{-2}$ & 0.0144860076 & 1.7233976320 \\
		$2^{-6}$  & $2^{-3}$ & 0.0039410392 & 1.8780120309 \\
		$2^{-8}$  & $2^{-4}$ & 0.0018019911 & 1.1289842459 \\
		$2^{-10}$ & $2^{-5}$ & 0.0005539346 & 1.7018042045 \\
		$2^{-12}$ & $2^{-6}$ & 0.0001794585 & 1.6260653630 \\
		\bottomrule
	\end{tabularx}
	\caption{Errors and orders of convergence for the solution to the \textbf{porous medium equation} \eqref{eq:porousmedium} with exponent $3$ in one dimension with \textbf{S1}. $D=1.0$, $m=3.0$, $\tini = 2.0$, $\tfin = 3.0$, $L = 6.0$.}
	\label{tab:OOCPorousMediumEquationSBCH13.0}
\end{table}
 \begin{table}[H]
	\centering
	\begin{tabularx}{0.88\textwidth}{XXXXX}
		\toprule
		$\Dt$    & $\Dx$    & $\Dy$    & Error        & Order        \\
		\midrule
		$2^{-2}$ & $2^{-1}$ & $2^{-1}$ & 0.4389569539 & ---          \\
		$2^{-4}$ & $2^{-2}$ & $2^{-2}$ & 0.1745831277 & 1.3301653308 \\
		$2^{-6}$ & $2^{-3}$ & $2^{-3}$ & 0.0639979688 & 1.4478161159 \\
		$2^{-8}$ & $2^{-4}$ & $2^{-4}$ & 0.0219859179 & 1.5414463482 \\
		\bottomrule
	\end{tabularx}
	\caption{Errors and orders of convergence for the solution to the \textbf{porous medium equation} \eqref{eq:porousmedium} with exponent $3$ in two dimensions with \textbf{S1}. $D=1.0$, $m=3.0$, $\tini = 2.0$, $\tfin = 3.0$, $L = 6.0$.}
	\label{tab:OOCPorousMediumEquationSBCH23.0}
\end{table}
 
\begin{table}[H]
	\centering
	\begin{tabularx}{0.88\textwidth}{XXXX}
		\toprule
		$\Dt$    & $\Dx$    & Error        & Order        \\
		\midrule
		$2^{-1}$ & $2^{-1}$ & 0.0581751903 & ---          \\
		$2^{-2}$ & $2^{-2}$ & 0.0196251683 & 1.5676989967 \\
		$2^{-3}$ & $2^{-3}$ & 0.0115953163 & 0.7591628538 \\
		$2^{-4}$ & $2^{-4}$ & 0.0071496227 & 0.6976031599 \\
		$2^{-5}$ & $2^{-5}$ & 0.0040079983 & 0.8349852132 \\
		$2^{-6}$ & $2^{-6}$ & 0.0021089620 & 0.9263487670 \\
		\bottomrule
	\end{tabularx}
	\caption{Errors and orders of convergence for the solution to the \textbf{porous medium equation} \eqref{eq:porousmedium} with exponent $3$ in one dimension with \textbf{S2}. $D=1.0$, $m=3.0$, $\tini = 2.0$, $\tfin = 3.0$, $L = 6.0$.}
	\label{tab:OOCPorousMediumEquationIBCH13.0}
\end{table}
 \begin{table}[H]
	\centering
	\begin{tabularx}{0.88\textwidth}{XXXXX}
		\toprule
		$\Dt$    & $\Dx$    & $\Dy$    & Error        & Order        \\
		\midrule
		$2^{-1}$ & $2^{-1}$ & $2^{-1}$ & 0.4476788322 & ---          \\
		$2^{-2}$ & $2^{-2}$ & $2^{-2}$ & 0.1869728167 & 1.2596355670 \\
		$2^{-3}$ & $2^{-3}$ & $2^{-3}$ & 0.0744864973 & 1.3277777102 \\
		$2^{-4}$ & $2^{-4}$ & $2^{-4}$ & 0.0345075878 & 1.1100652941 \\
		\bottomrule
	\end{tabularx}
	\caption{Errors and orders of convergence for the solution to the \textbf{porous medium equation} \eqref{eq:porousmedium} with exponent $3$ in two dimensions with \textbf{S2}. $D=1.0$, $m=3.0$, $\tini = 2.0$, $\tfin = 3.0$, $L = 6.0$.}
	\label{tab:OOCPorousMediumEquationIBCH23.0}
\end{table}

\subsection{Linear, Non-Linear and Non-local Fokker-Planck Equations}\label{sec:FPvalidation}

In order to validate our schemes for equations involving potentials, we consider the linear Fokker-Planck equation $\pt\rho=D\Delta\rho+\div\prt{\rho\bx}$, i.e.,
\begin{equation}\label{eq:linearFP}
	H(\rho)=D\prt{\rho\log(\rho)-\rho},\quad
	V(\bx)=\frac{\abs{\bx}^2}{2},\quad
	W(\bx)=0,
\end{equation}
for $D>0$. Regardless of the initial datum, there is a unique, globally stable steady state for the equation, given by the heat kernel \eqref{eq:heatkernel} at $t=1/2$, i.e.
\begin{equation}\label{eq:linearFPsteady}
	\rho_{\infty}(\bx)=
	\prt{2\pi D}^{-\frac{n}{2}}
	\exp\prt*{-\frac{\abs{\bx}^2}{2D}}.
\end{equation}
\revision{Furthermore, the evolution of an initial point source at the origin towards this equilibrium is given by
	\begin{align}\label{eq:linearFPanalytic}
		\Upsilon\prt{t,\bx} =
		\prt{2\pi D\prt{1-e^{-2t}}}^{-\frac{n}{2}}
		\exp\prt*{-\frac{\abs{\bx}^2}{2D\prt{1-e^{-2t}}}},
	\end{align}
	see \cite{Pavliotis2014} for instance.
}

\begin{figure}[ht]
	\centering
	\begin{subfigure}[t]{0.66\textwidth}
		\centering
		\begin{subfigure}[t]{0.49\textwidth}
			\centering
			\includegraphics[trim={2cm 1cm 1.25cm 2cm},clip, width=\textwidth]{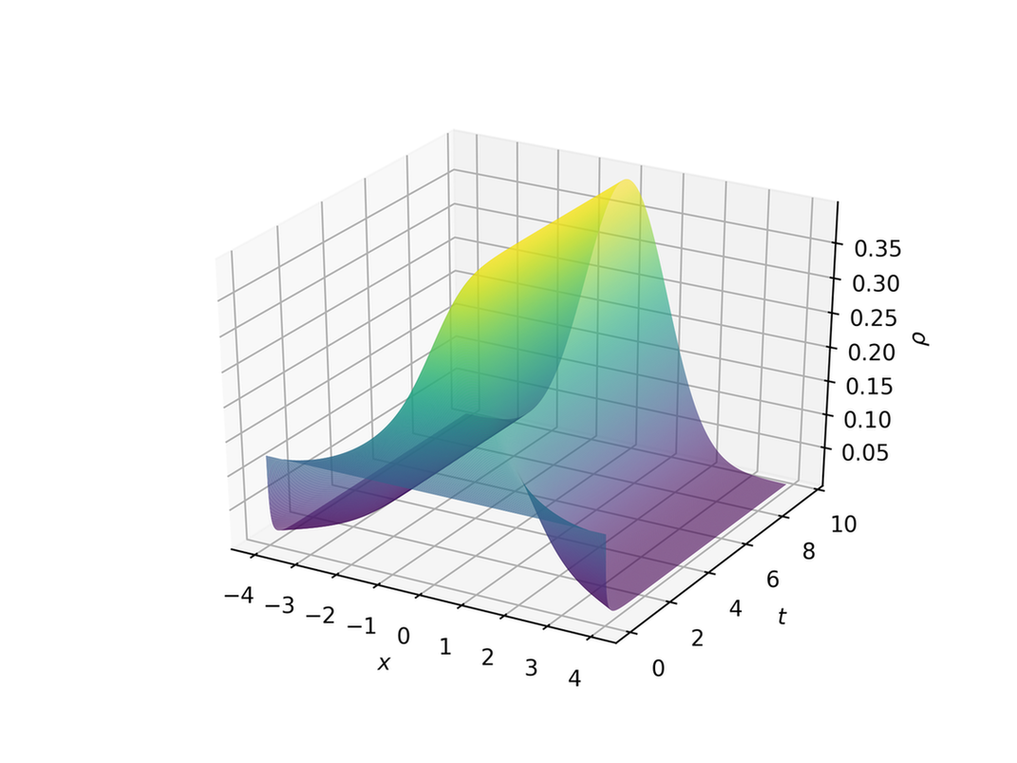}
			\caption{Convergence to $\rho_{\infty}$ in time.}
		\end{subfigure}~
		\begin{subfigure}[t]{0.49\textwidth}
			\centering
			\includegraphics[width=\textwidth]{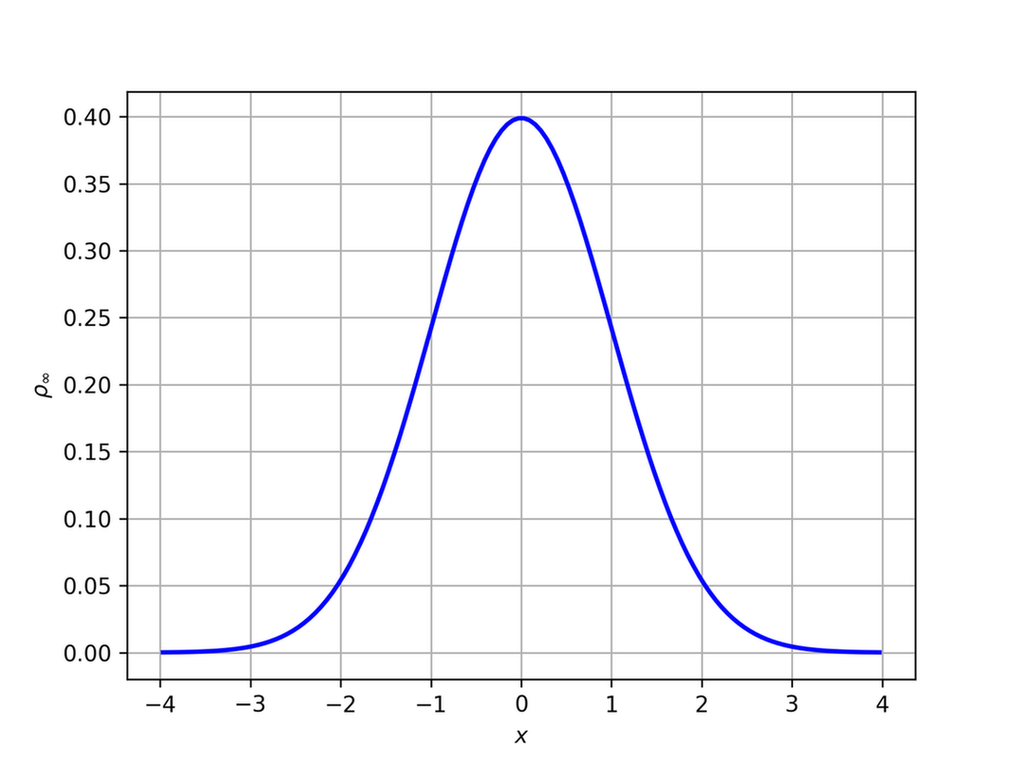}
			\caption{Stationary state $\rho_{\infty}(x)$.}
		\end{subfigure}\caption*{One dimension.}
	\end{subfigure}~
	\begin{subfigure}[t]{0.33\textwidth}
		\centering
		\begin{subfigure}[t]{1.0\textwidth}
			\centering
			\includegraphics[trim={2cm 1cm 1.25cm 2cm},clip, width=\textwidth]{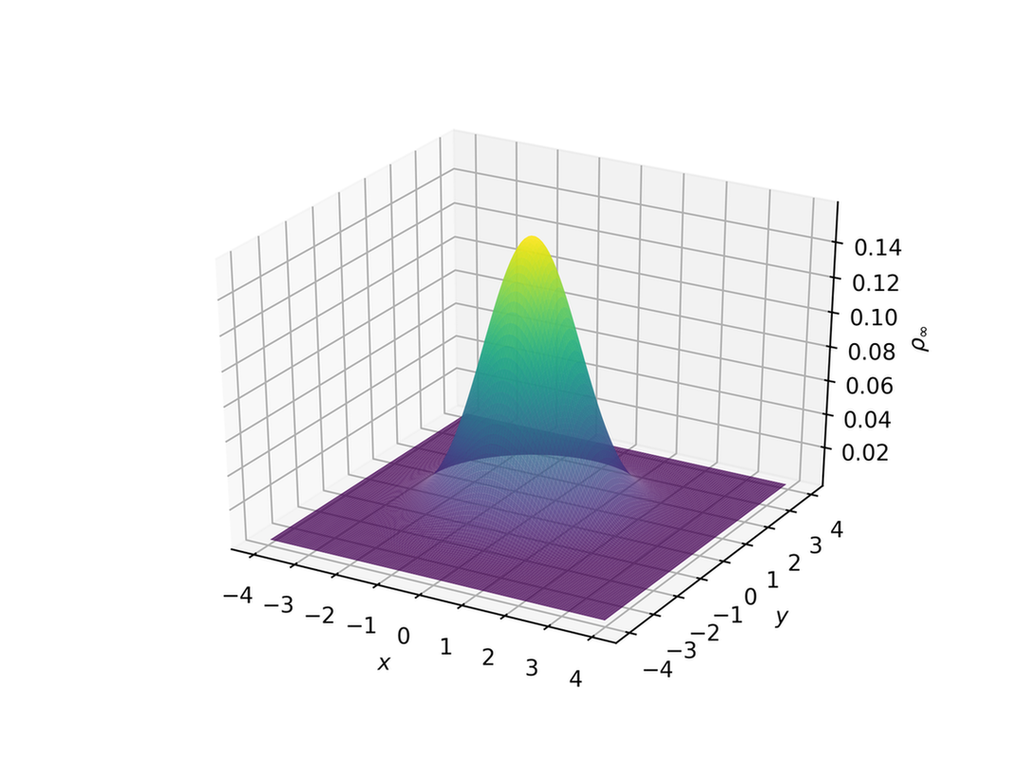}
			\caption{Stationary state $\rho_{\infty}(x)$.}
		\end{subfigure}
		\caption*{Two dimensions.}
	\end{subfigure}
	\caption{Stationary state of the \textbf{non-local Fokker-Planck equation} \eqref{eq:nonlocalFP} with $D=1$, equivalent to that of the linear Fokker-Planck equation \eqref{eq:linearFP} due to the symmetry of the initial datum about the origin.}
	\label{fig:nonlocalFP}
\end{figure}
 
The confining potential of \eqref{eq:linearFP} can be replaced by an equal interaction potential, permitting the validation of the interaction component of the schemes. The new equation involves a non-local term but will have the same solution as the linear Fokker-Planck Equation for all initial datum which is symmetric about the origin, see Figure \ref{fig:nonlocalFP}. This non-local Fokker-Planck Equation
\begin{equation}\label{eq:nonlocalFP}
	H(\rho)=D\prt{\rho\log(\rho)-\rho},\quad
	V(\bx)=0,\quad
	W(\bx)=\frac{\abs{\bx}^2}{2},
\end{equation}
for $D>0$, ought to display the same analytical solution \eqref{eq:linearFPanalytic}, the same steady state \eqref{eq:linearFPsteady} and the same order of convergence to equilibrium as the local case. In the analytic setting, with centred initial datum, the $L_1$ difference $\norm{\rho(t,\bx)-\rho_{\infty}(\bx)}_{L_1}$ is expected to decrease exponentially with order $\mathcal{O}\prt{-2t}$. Furthermore, the \textit{relative entropy} $E\prt{\rho}-E\prt{\rho_{\infty}}$ should decrease with $\mathcal{O}\prt{-4t}$ \cite{Toscani1999}.

\revision{To validate the convergence of the sweeping dimensional splitting schemes, we validate the evolution of a source solution of \cref{eq:nonlocalFP} in two dimensions against the analytical solution \eqref{eq:linearFPanalytic}, in the same fashion as \cref{sec:heatvalidation,sec:PMEvalidation}. The validation of the solution to \cref{eq:linearFP} has also been included for comparison. \cref{tab:FPSBCH2,tab:FPSBCH2W} demonstrate the second-order convergence using S1 for \eqref{eq:linearFP} and \eqref{eq:nonlocalFP} respectively. \cref{tab:FPIBCH2,tab:FPIBCH2W} show the corresponding (better than) first-order convergence using S2.
}

\begin{table}[H]
	\centering
	\begin{tabularx}{0.88\textwidth}{XXXXX}
		\toprule
		$\Dt$     & $\Dx$    & $\Dy$    & Error        & Order        \\
		\midrule
		$2^{-4}$  & $2^{-1}$ & $2^{-1}$ & 0.0130193017 & ---          \\
		$2^{-6}$  & $2^{-2}$ & $2^{-2}$ & 0.0033748735 & 1.9477467433 \\
		$2^{-8}$  & $2^{-3}$ & $2^{-3}$ & 0.0008538822 & 1.9827243883 \\
		$2^{-10}$ & $2^{-4}$ & $2^{-4}$ & 0.0002153366 & 1.9874436476 \\
		$2^{-12}$ & $2^{-5}$ & $2^{-5}$ & 0.0000542630 & 1.9885519609 \\
		\bottomrule
	\end{tabularx}
	\caption{Errors and orders of convergence for the solution to the \textbf{linear Fokker-Planck equation} \eqref{eq:linearFP} in two dimensions with \textbf{S1}. $D=1.0$, $\tini = 2.0$, $\tfin = 3.0$, $L = 5.0$.}
	\label{tab:FPSBCH2}
\end{table}
 \begin{table}[H]
	\centering
	\begin{tabularx}{0.88\textwidth}{XXXXX}
		\toprule
		$\Dt$     & $\Dx$    & $\Dy$    & Error        & Order        \\
		\midrule
		$2^{-6}$  & $2^{-1}$ & $2^{-1}$ & 0.0128997621 & ---          \\
		$2^{-8}$  & $2^{-2}$ & $2^{-2}$ & 0.0033440967 & 1.9476559875 \\
		$2^{-10}$ & $2^{-3}$ & $2^{-3}$ & 0.0008446799 & 1.9851398626 \\
		$2^{-12}$ & $2^{-4}$ & $2^{-4}$ & 0.0002133594 & 1.9851187830 \\
		$2^{-14}$ & $2^{-5}$ & $2^{-5}$ & 0.0000537499 & 1.9889523045 \\
		\bottomrule
	\end{tabularx}
	\caption{Errors and orders of convergence for the solution to the \textbf{non-local Fokker-Planck equation} \eqref{eq:nonlocalFP} in two dimensions with \textbf{S1}. $D=1.0$, $\tini = 2.0$, $\tfin = 3.0$, $L = 5.0$.}
	\label{tab:FPSBCH2W}
\end{table}
 
\begin{table}[H]
	\centering
	\begin{tabularx}{0.88\textwidth}{XXXXX}
		\toprule
		$\Dt$    & $\Dx$    & $\Dy$    & Error        & Order        \\
		\midrule
		$2^{-1}$ & $2^{-1}$ & $2^{-1}$ & 0.0126781382 & ---          \\
		$2^{-2}$ & $2^{-2}$ & $2^{-2}$ & 0.0035203530 & 1.8485509035 \\
		$2^{-3}$ & $2^{-3}$ & $2^{-3}$ & 0.0009623843 & 1.8710350515 \\
		$2^{-4}$ & $2^{-4}$ & $2^{-4}$ & 0.0002759015 & 1.8024599428 \\
		$2^{-5}$ & $2^{-5}$ & $2^{-5}$ & 0.0000804105 & 1.7786959505 \\
		\bottomrule
	\end{tabularx}
	\caption{Errors and orders of convergence for the solution to the \textbf{linear Fokker-Planck equation} \eqref{eq:linearFP} in two dimensions with \textbf{S2}. $D=1.0$, $\tini = 2.0$, $\tfin = 3.0$, $L = 5.0$.}
	\label{tab:FPIBCH2}
\end{table}
 \begin{table}[H]
	\centering
	\begin{tabularx}{0.88\textwidth}{XXXXX}
		\toprule
		$\Dt$    & $\Dx$    & $\Dy$    & Error        & Order        \\
		\midrule
		$2^{-1}$ & $2^{-1}$ & $2^{-1}$ & 0.0126777040 & ---          \\
		$2^{-2}$ & $2^{-2}$ & $2^{-2}$ & 0.0035197973 & 1.8487292513 \\
		$2^{-3}$ & $2^{-3}$ & $2^{-3}$ & 0.0009618213 & 1.8716515859 \\
		$2^{-4}$ & $2^{-4}$ & $2^{-4}$ & 0.0002753364 & 1.8045732416 \\
		$2^{-5}$ & $2^{-5}$ & $2^{-5}$ & 0.0000798789 & 1.7853077858 \\
		\bottomrule
	\end{tabularx}
	\caption{Errors and orders of convergence for the solution to the \textbf{non-local Fokker-Planck equation} \eqref{eq:nonlocalFP} in two dimensions with \textbf{S2}. $D=1.0$, $\tini = 2.0$, $\tfin = 3.0$, $L = 5.0$.}
	\label{tab:FPIBCH2W}
\end{table}
  
To validate the energy dissipation properties of the schemes, we studied the convergence in time of Gaussian initial datum in both problems to the numerical steady state, verifying the agreement between the local and the non-local settings. Convergence to the known dissipation rates upon refinement of the mesh was verified as well --- see Figure \ref{fig:nonlocalFPconvergence} for the two-dimensional case.

\begin{figure}[ht]
	\centering
	\begin{subfigure}[t]{0.42\textwidth}
		\centering
		\includegraphics[width=\textwidth]{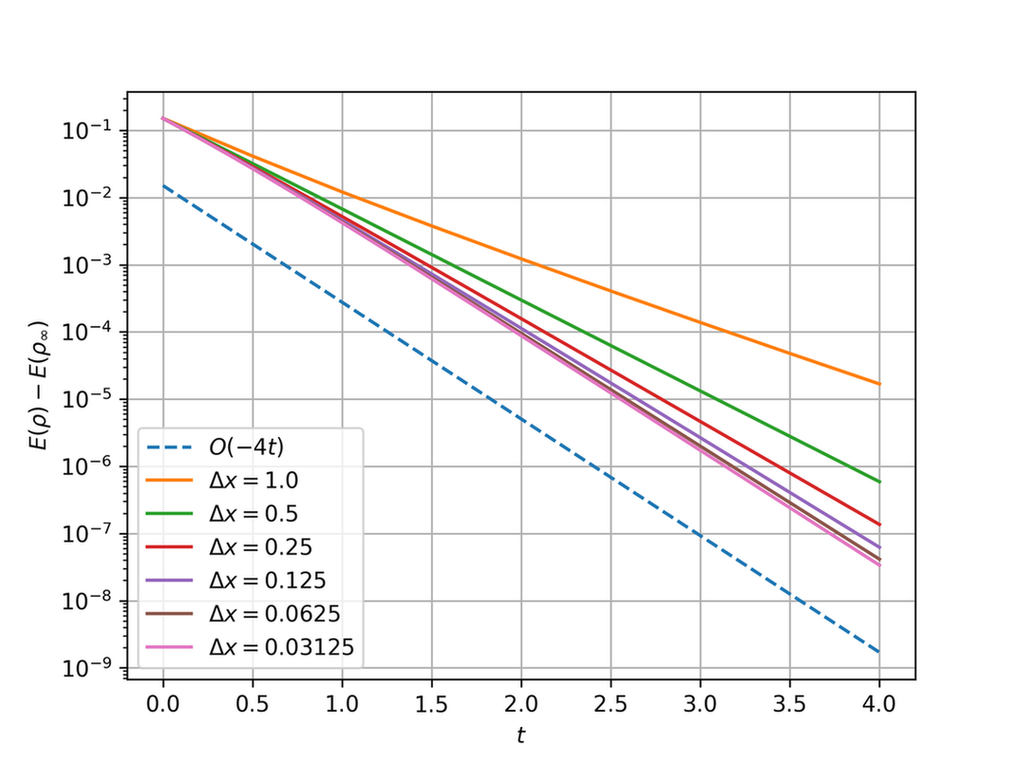}
		\caption{S1.}
	\end{subfigure}~
	\begin{subfigure}[t]{0.42\textwidth}
		\centering
		\includegraphics[width=\textwidth]{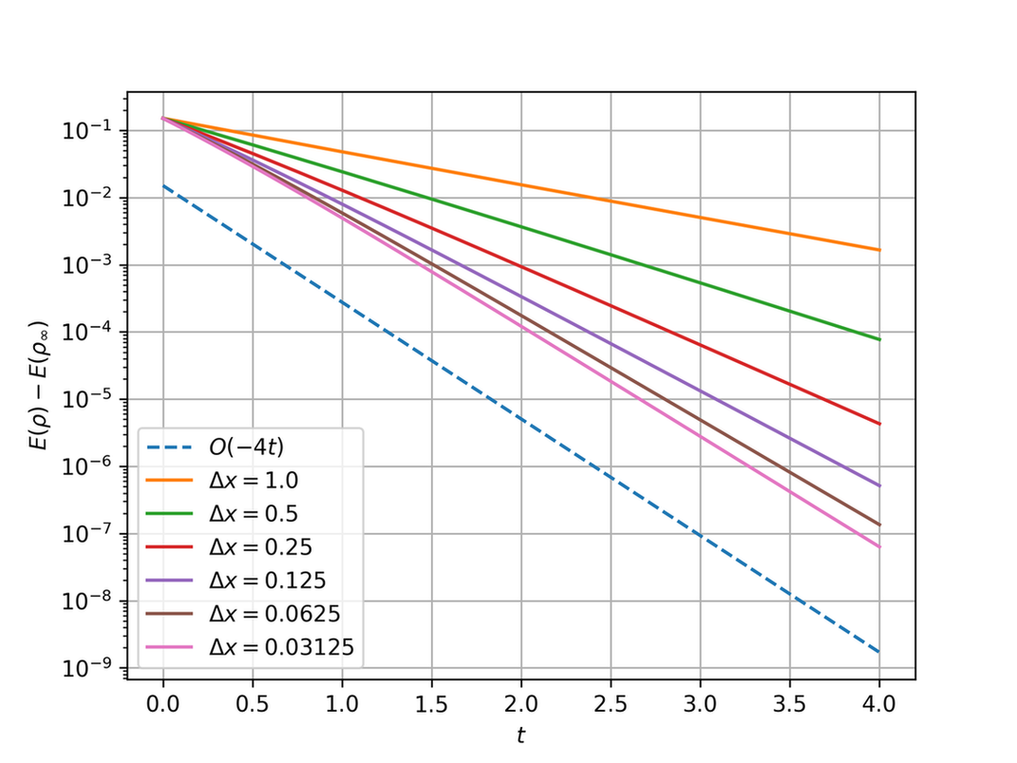}
		\caption{S2.}
	\end{subfigure}\caption{Dissipation of the discrete energy $E_\Delta$ in the convergence to the stationary state of the \textbf{non-local Fokker-Planck equation} \eqref{eq:nonlocalFP} in two dimensions. Note the slopes approach $\mathcal{O}\prt{-4t}$ as the mesh is refined. $\Delta t=\Delta x$, $D=1.0$, $L = 5.0$.}
	\label{fig:nonlocalFPconvergence}
\end{figure}
 
To further the discussion, we consider a non-linear diffusion case also. Replacing the linear term on \eqref{eq:linearFP} by the porous medium equivalent yields a non-linear Fokker-Planck Equation
\begin{equation}\label{eq:nonlinearFP}
	H(\rho)=\frac{D}{m-1}\rho^m,\quad
	V(\bx)=\frac{\abs{\bx}^2}{2},\quad
	W(\bx)=0,
\end{equation}
for $D>0,m>1$. Again, regardless of initial datum this equation exhibits a globally stable steady state, see Figure \ref{fig:nonlinearFP}.

\begin{figure}[ht]
	\centering
	\begin{subfigure}[t]{0.66\textwidth}
		\centering
		\begin{subfigure}[t]{0.49\textwidth}
			\centering
			\includegraphics[trim={2cm 1cm 1.25cm 2cm},clip, width=\textwidth]{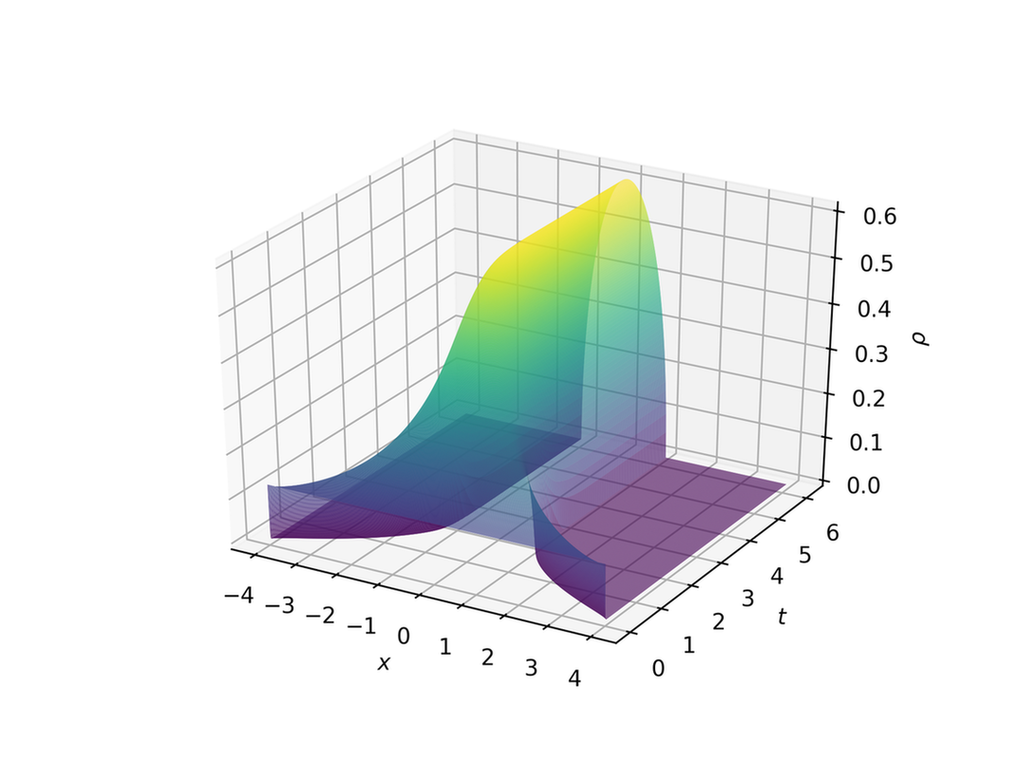}
			\caption{Convergence to $\rho_{\infty}$ in time.}
		\end{subfigure}~
		\begin{subfigure}[t]{0.49\textwidth}
			\centering
			\includegraphics[width=\textwidth]{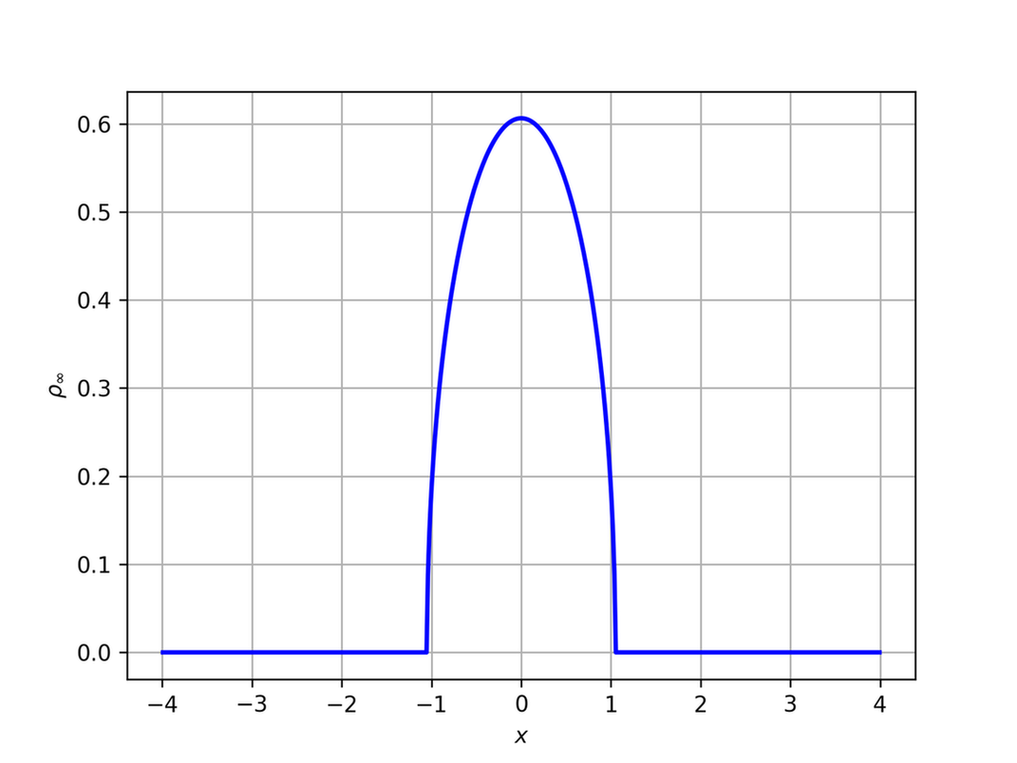}
			\caption{Stationary state $\rho_{\infty}(x)$.}
		\end{subfigure}\caption*{One dimension.}
	\end{subfigure}~
	\begin{subfigure}[t]{0.33\textwidth}
		\centering
		\begin{subfigure}[t]{1.0\textwidth}
			\centering
			\includegraphics[trim={2cm 1cm 1.25cm 2cm},clip, width=\textwidth]{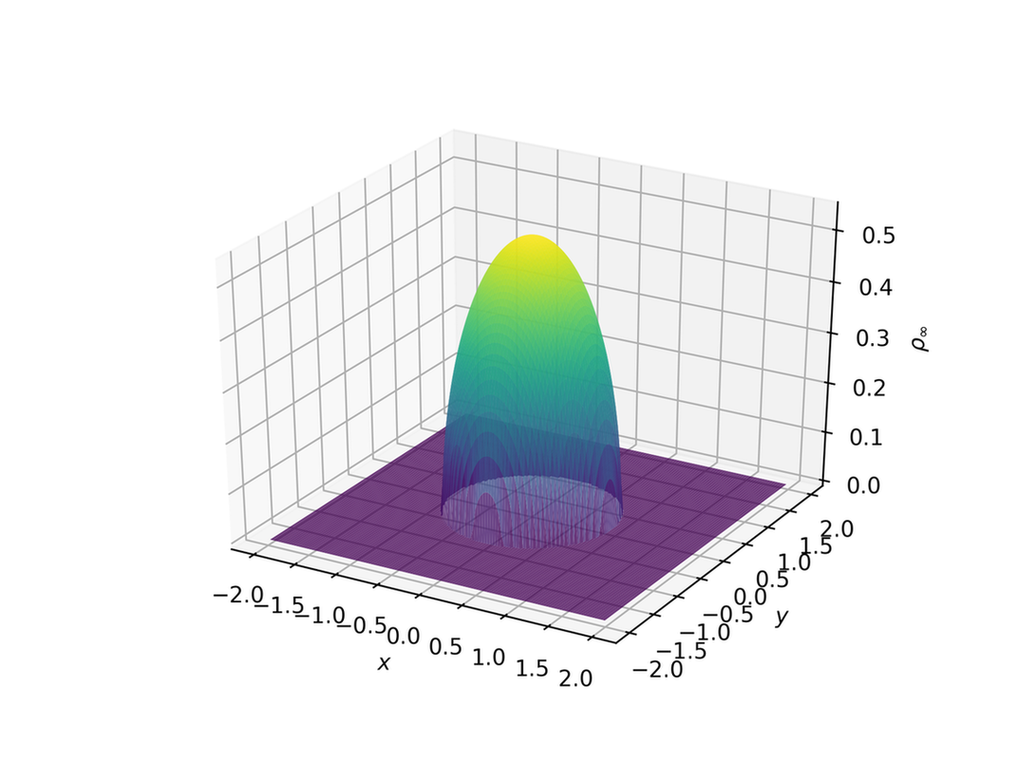}
			\caption{Stationary state $\rho_{\infty}(x)$.}
		\end{subfigure}
		\caption*{Two dimensions.}
	\end{subfigure}
	\caption{Stationary state of the \textbf{non-linear Fokker-Planck equation} \eqref{eq:nonlinearFP} with $D=1, m=3$.}
	\label{fig:nonlinearFP}
\end{figure}
 
The regularity of the steady solution is once again controlled by the exponent $m$, and so is the rate of convergence to the stationary profile. In one dimension, for symmetric initial datum, the $L_1$ difference $\norm{\rho(t,\bx)-\rho_{\infty}(\bx)}_{L_1}$ and the relative entropy $E\prt{\rho}-E\prt{\rho_{\infty}}$ dissipate with $\mathcal{O}\prt{-(m+1)t}$ and $\mathcal{O}\prt{-2(m+1)t}$ respectively \cite{C.D.T2007}. Similar verifications were performed --- see Figure \ref{fig:nonlinearFPconvergence} for $m=3$.

\begin{figure}[ht]
	\centering
	\begin{subfigure}[t]{0.42\textwidth}
		\centering
		\includegraphics[width=\textwidth]{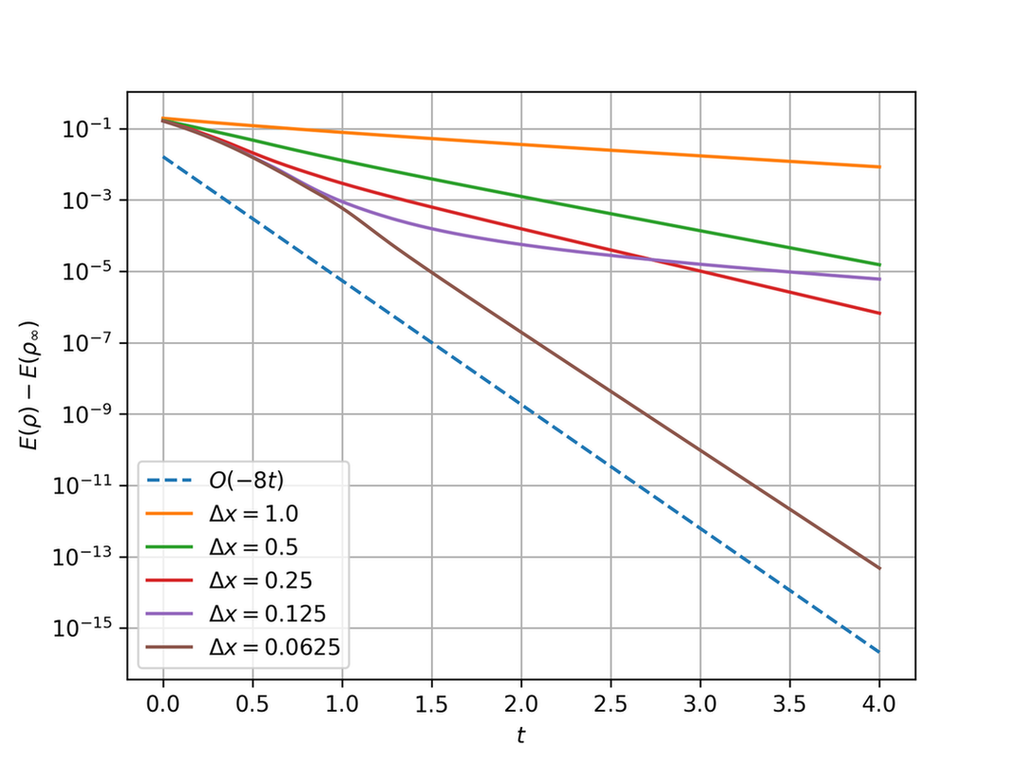}
		\caption{S1.}
	\end{subfigure}~
	\begin{subfigure}[t]{0.42\textwidth}
		\centering
		\includegraphics[width=\textwidth]{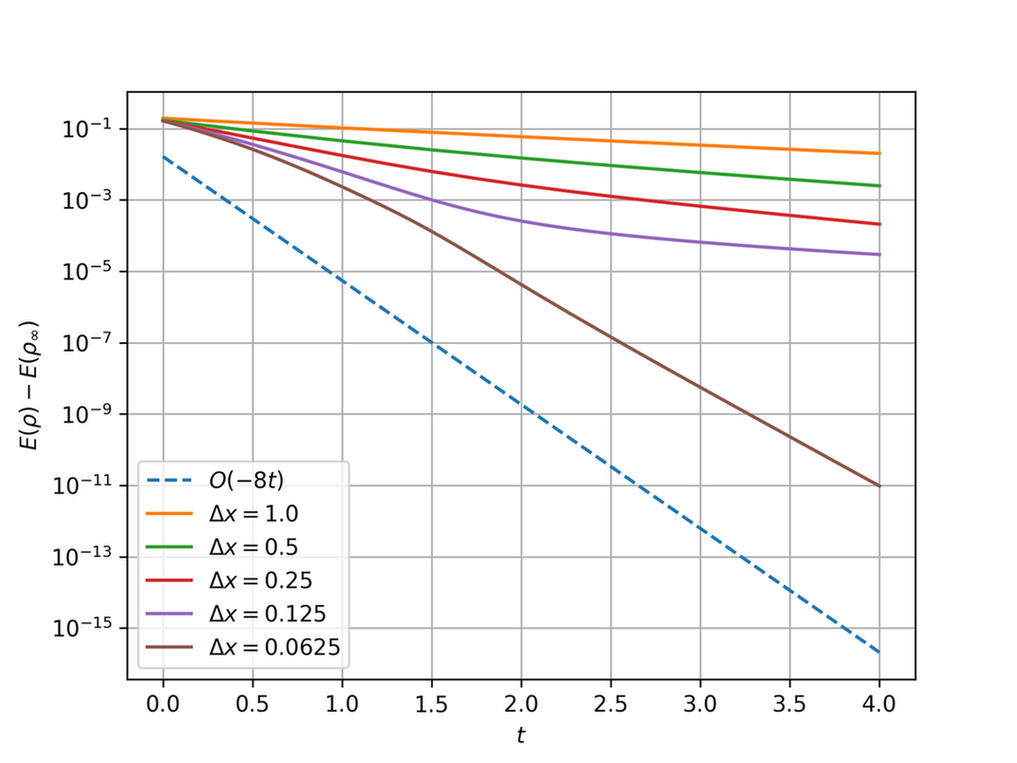}
		\caption{S2.}
	\end{subfigure}\caption{Dissipation of the discrete energy $E_\Delta$ in the convergence to the stationary state of the \textbf{non-linear Fokker-Planck equation} \eqref{eq:nonlinearFP} with exponent $3$ in one dimension. Note the slopes approach $\mathcal{O}\prt{-8t}$ as the mesh is refined. $\Delta t=\Delta x$, $D=1.0$, $m=3.0$, $L = 5.0$.}
	\label{fig:nonlinearFPconvergence}
\end{figure}
  \section{Numerical Experiments with S2}\label{sec:experiments}

This concluding section presents a selection of experiments which aim to showcase some interesting problems which can be solved with the S2 scheme. First, we consider steady state problems with a variety of confining potentials. Later on, we discuss equations whose solutions display metastability in their convergence to equilibrium. Finally, we study a phase transition driven by noise in a kinetic system by constructing the stable branch of the bifurcation diagram.

\subsection{Convergence to Steady States}

Section \ref{sec:FPvalidation} concerned the convergence to globally stable stationary solutions. Beyond the standard Fokker-Planck setting, the equivalents of \eqref{eq:linearFP} and \eqref{eq:nonlinearFP} with more intricate confining potentials may be considered. For instance, a bistable term yields
\begin{equation}\label{eq:bistableheat}
	H(\rho)=D\prt{\rho\log(\rho)-\rho},\quad
	V(\bx)=\frac{\abs{\bx}^4}{4}-\frac{\abs{\bx}^2}{2},\quad
	W(\bx)=0,
\end{equation}
for $D>0$ in the linear diffusion case, which displays a globally stable steady state characterised by maxima at $\abs{\bx}=1$; see Figure \ref{fig:bistableheat}.
\begin{figure}[ht]
	\centering
	\begin{subfigure}[t]{0.66\textwidth}
		\centering
		\begin{subfigure}[t]{0.49\textwidth}
			\centering
			\includegraphics[trim={2cm 1cm 1.25cm 2cm},clip, width=\textwidth]{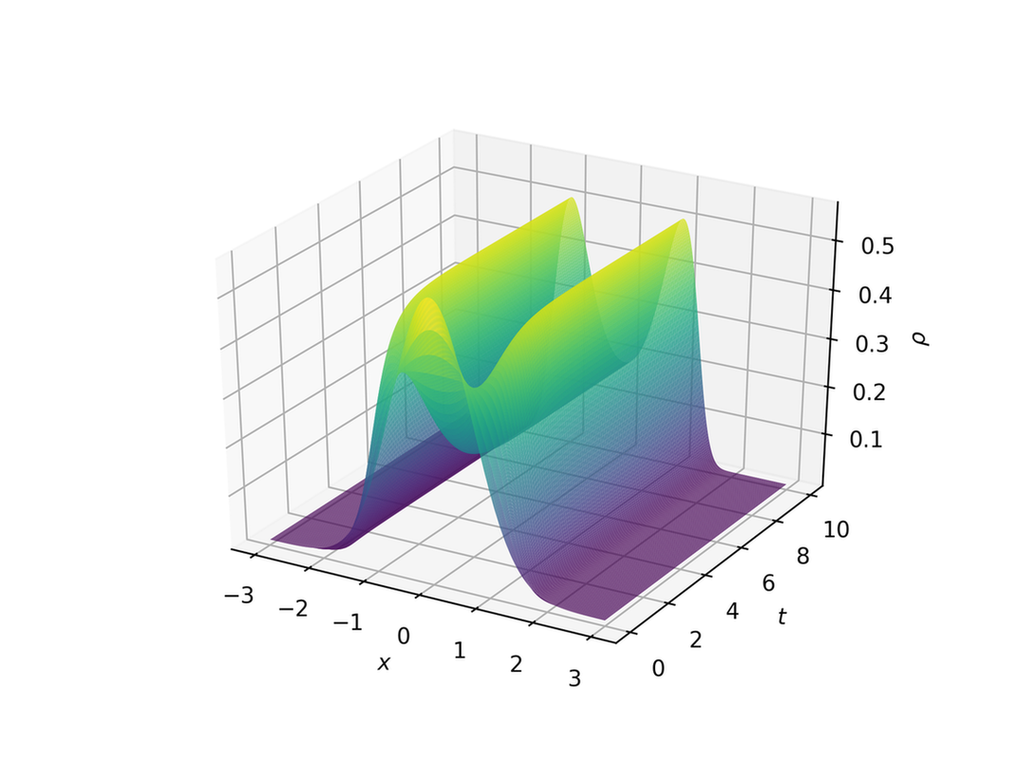}
			\caption{Convergence to $\rho_{\infty}$ in time.}
		\end{subfigure}~
		\begin{subfigure}[t]{0.49\textwidth}
			\centering
			\includegraphics[width=\textwidth]{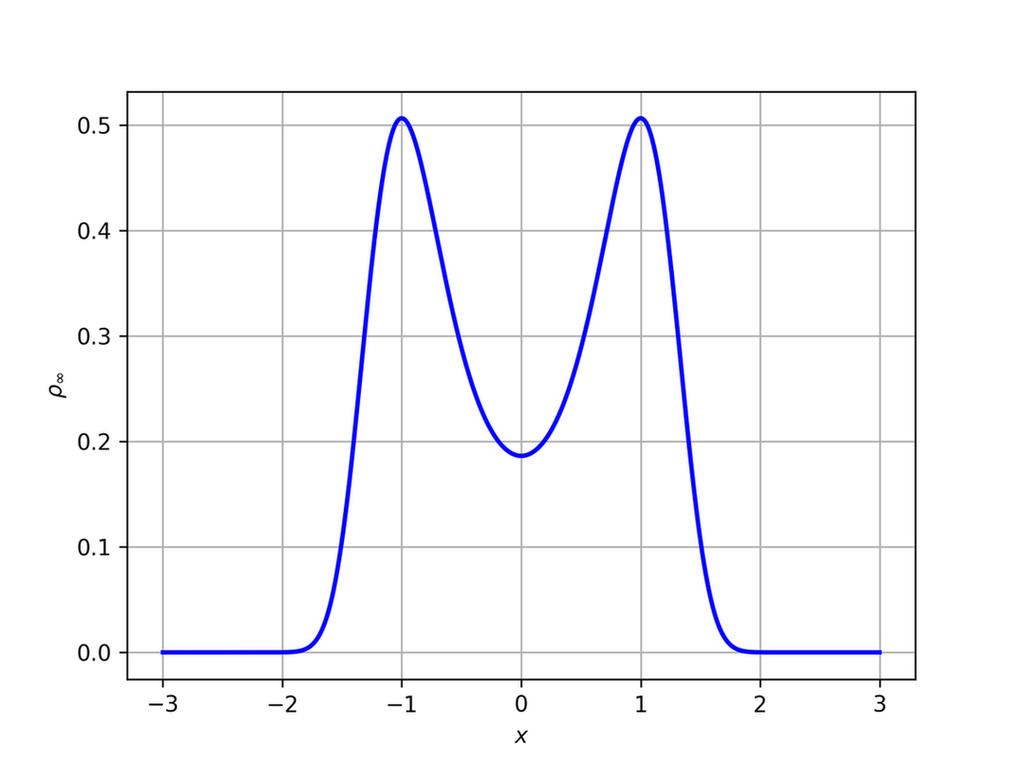}
			\caption{Stationary state $\rho_{\infty}(x)$.}
		\end{subfigure}\caption*{One dimension.}
	\end{subfigure}~
	\begin{subfigure}[t]{0.33\textwidth}
		\centering
		\begin{subfigure}[t]{1.0\textwidth}
			\centering
			\includegraphics[trim={2cm 1cm 1.25cm 2cm},clip, width=\textwidth]{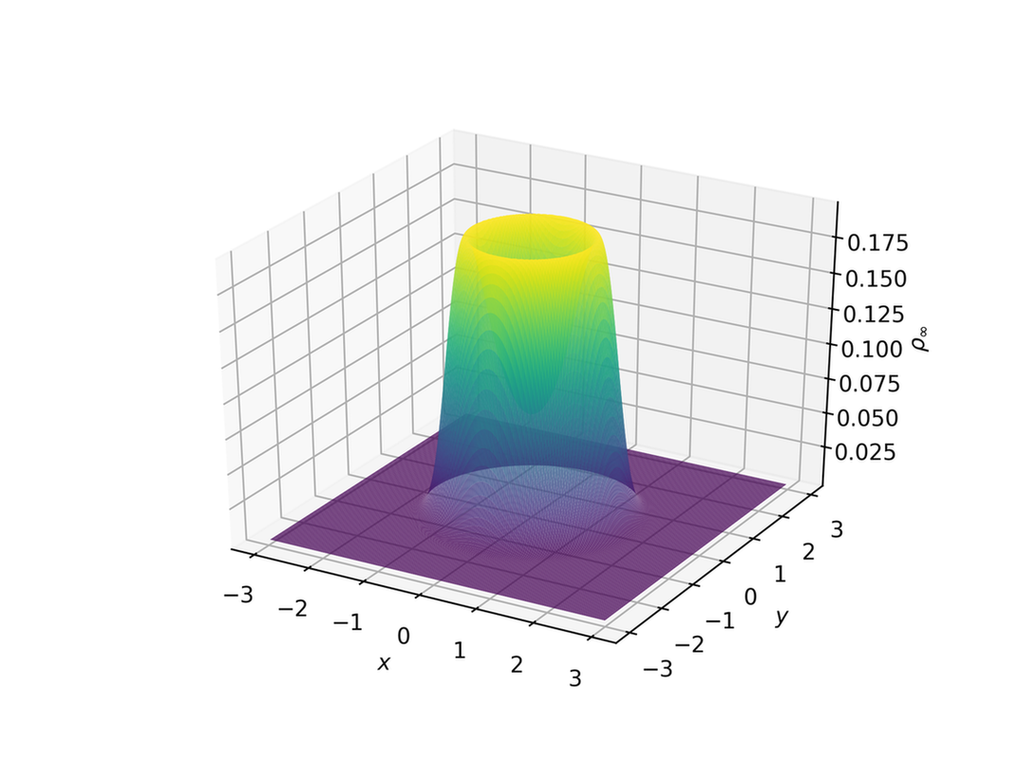}
			\caption{Stationary state $\rho_{\infty}(x)$.}
		\end{subfigure}
		\caption*{Two dimensions.}
	\end{subfigure}
	\caption{Stationary state of equation \eqref{eq:bistableheat} with $D=0.25$. Note the maxima at $\abs{\bx}=1$.}
	\label{fig:bistableheat}
\end{figure}
 \begin{figure}[ht]
	\centering
	\begin{subfigure}[t]{0.66\textwidth}
		\centering
		\begin{subfigure}[t]{0.49\textwidth}
			\centering
			\includegraphics[trim={2cm 1cm 1.25cm 2cm},clip, width=\textwidth]{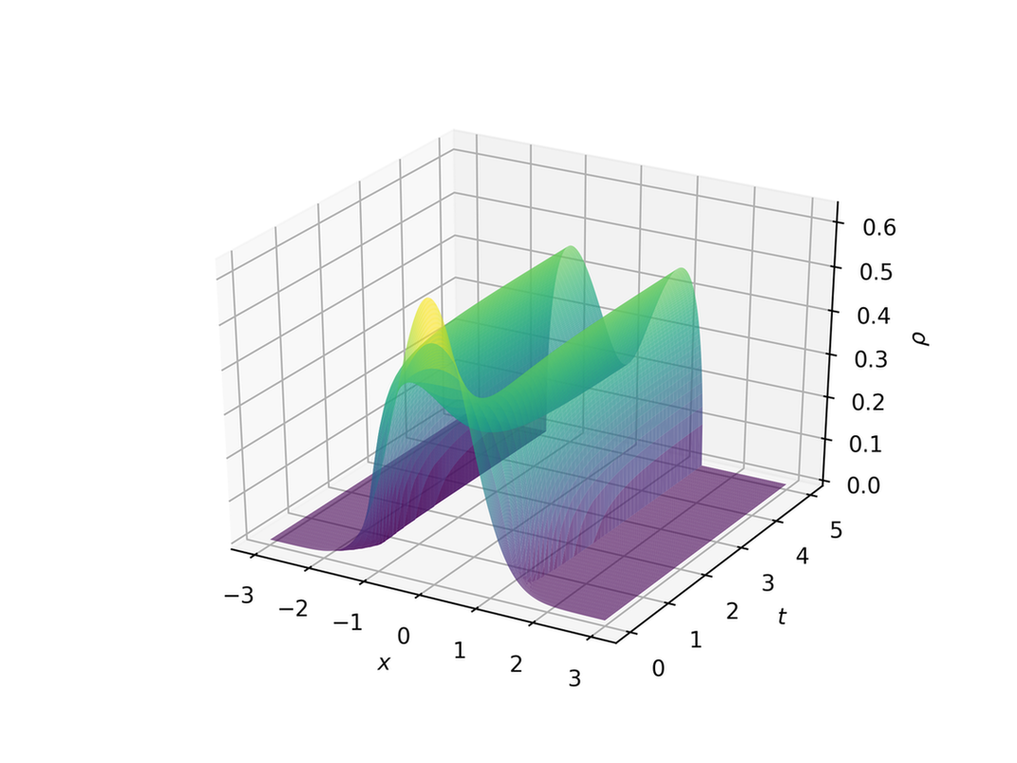}
			\caption{Convergence to $\rho_{\infty}$ in time.}
		\end{subfigure}~
		\begin{subfigure}[t]{0.49\textwidth}
			\centering
			\includegraphics[width=\textwidth]{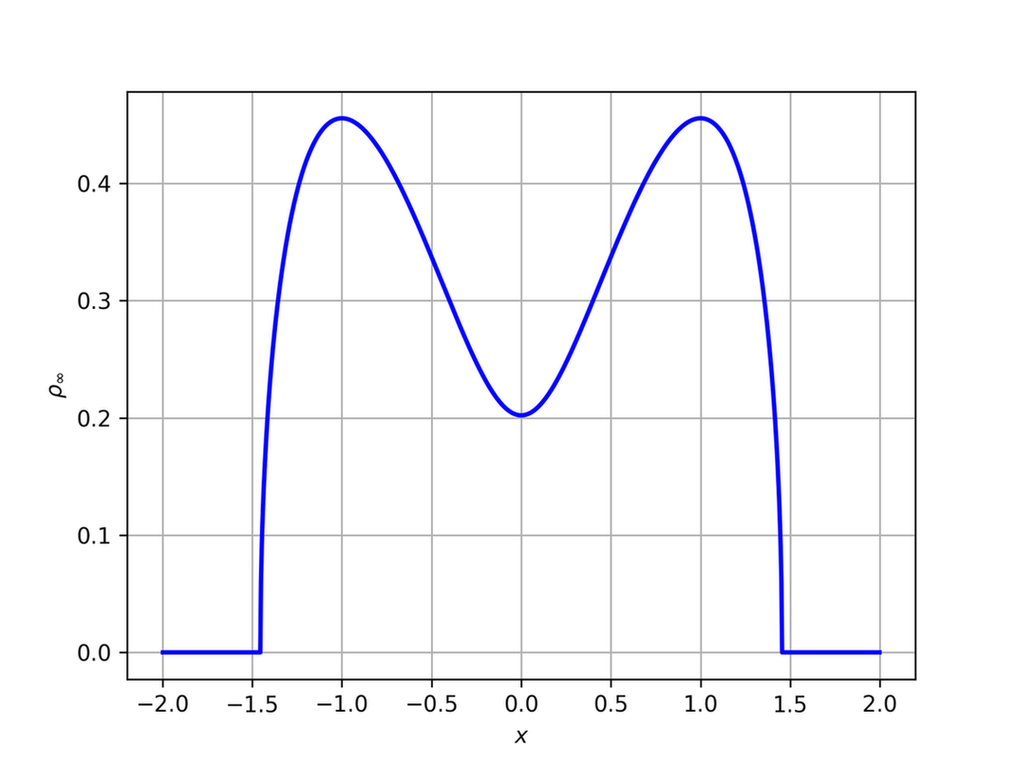}
			\caption{Stationary state $\rho_{\infty}(x)$.}
		\end{subfigure}\caption*{One dimension.}
	\end{subfigure}~
	\begin{subfigure}[t]{0.33\textwidth}
		\centering
		\begin{subfigure}[t]{1.0\textwidth}
			\centering
			\includegraphics[trim={2cm 1cm 1.25cm 2cm},clip, width=\textwidth]{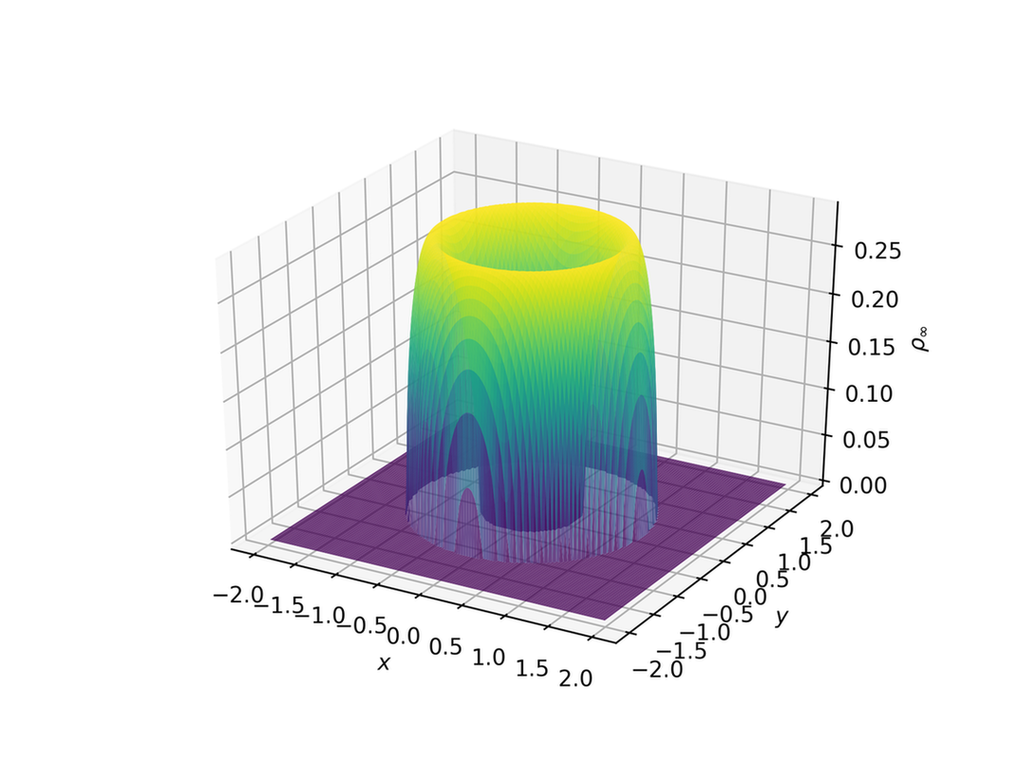}
			\caption{Stationary state $\rho_{\infty}(x)$.}
		\end{subfigure}
		\caption*{Two dimensions.}
	\end{subfigure}
	\caption{Stationary state of equation \eqref{eq:bistableporousmedium} with $D=1, m=3$.}
	\label{fig:bistableporousmedium}
\end{figure}
 
In the non-linear setting, the equation reads:
\begin{equation}\label{eq:bistableporousmedium}
	H(\rho)=\frac{D}{m-1}\rho^m,\quad
	V(\bx)=\frac{\abs{\bx}^4}{4}-\frac{\abs{\bx}^2}{2},\quad
	W(\bx)=0,
\end{equation}
for $D>0, m>1$. The non-linear diffusion equivalent of \eqref{eq:bistableheat} also has a unique stable steady state, compactly supported and characterised by maxima at $\abs{\bx}=1$ in two dimensions. In the one-dimensional setting, the steady state is only unique provided the diffusion coefficient $D$ is large \cite{C.C.H2015}. Note that in two dimensions the stationary solution might not be simply connected --- see Figure \ref{fig:bistableporousmedium}.

\subsection{Metastability}

We will now study the behaviour of a non-linear diffusion equation with an attractive interaction kernel:
\begin{equation}\label{eq:nonlocalnonlineardiffusion}
	H(\rho)=\frac{D}{m-1}\rho^m,\quad
	V(\bx)=0,\quad
	W(\bx)=-\frac{1}{\sqrt{2\pi\sigma^2}}\exp\prt*{-\frac{|\bx|^2}{2\sigma^2}},
\end{equation}
for $D>0,m>1,\sigma>0$. This equation can exhibit a many-step convergence to equilibrium: rather than converging at a fixed rate, the energy dissipates in an alternating sequence of slow and fast timescales. Whilst the true steady state consists of a simply connected, compactly supported component, intermediate aggregates which depend on the initial datum can rapidly form. These aggregates will eventually merge but the rate of convergence can be arbitrarily slow if $\sigma$ is small.

Three examples are presented: Figure \ref{fig:nonlocalnonlineardiffusion1D}, where two aggregates are formed before reaching the final equilibrium; Figure \ref{fig:nonlocalnonlineardiffusion1Dthree}, where three and then two aggregates are present before the steady state appears; and Figure \ref{fig:nonlocalnonlineardiffusion2D}, which shows the asymmetric aggregation in two dimensions. Note the intermediate plateaux on the energy landscapes, each corresponding to one of the many-aggregate states.

\begin{figure}[ht]
	\centering

	\begin{subfigure}[t]{0.32\textwidth}
		\centering
		\includegraphics[trim={2cm 1cm 1.25cm 2cm},clip, width=\textwidth]{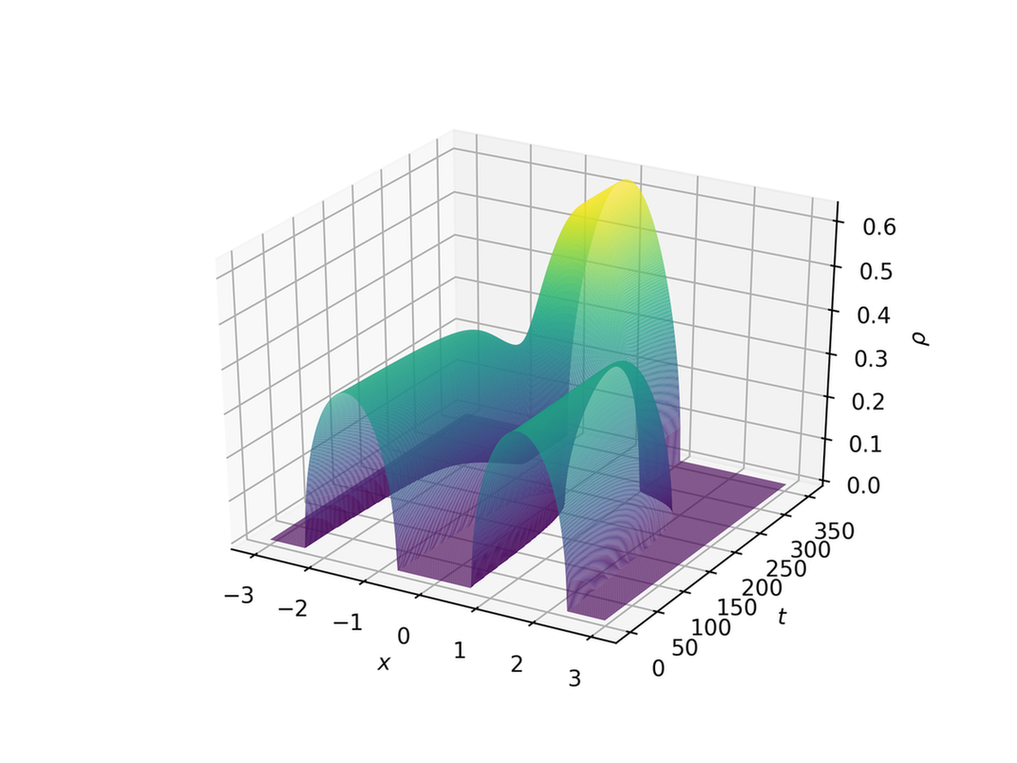}
		\caption{Convergence to $\rho_{\infty}$ in time.}
	\end{subfigure}~
	\begin{subfigure}[t]{0.32\textwidth}
		\centering
		\includegraphics[width=\textwidth]{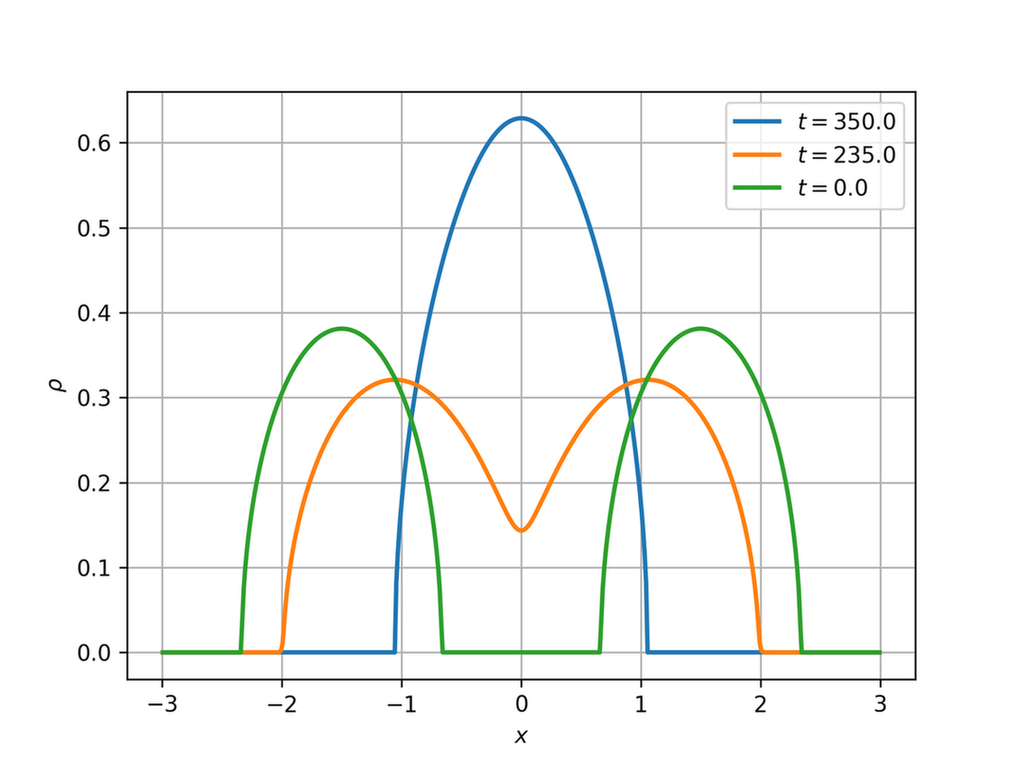}
		\caption{Intermediate and stationary states.}
	\end{subfigure}~
	\begin{subfigure}[t]{0.32\textwidth}
		\centering
		\includegraphics[width=\textwidth]{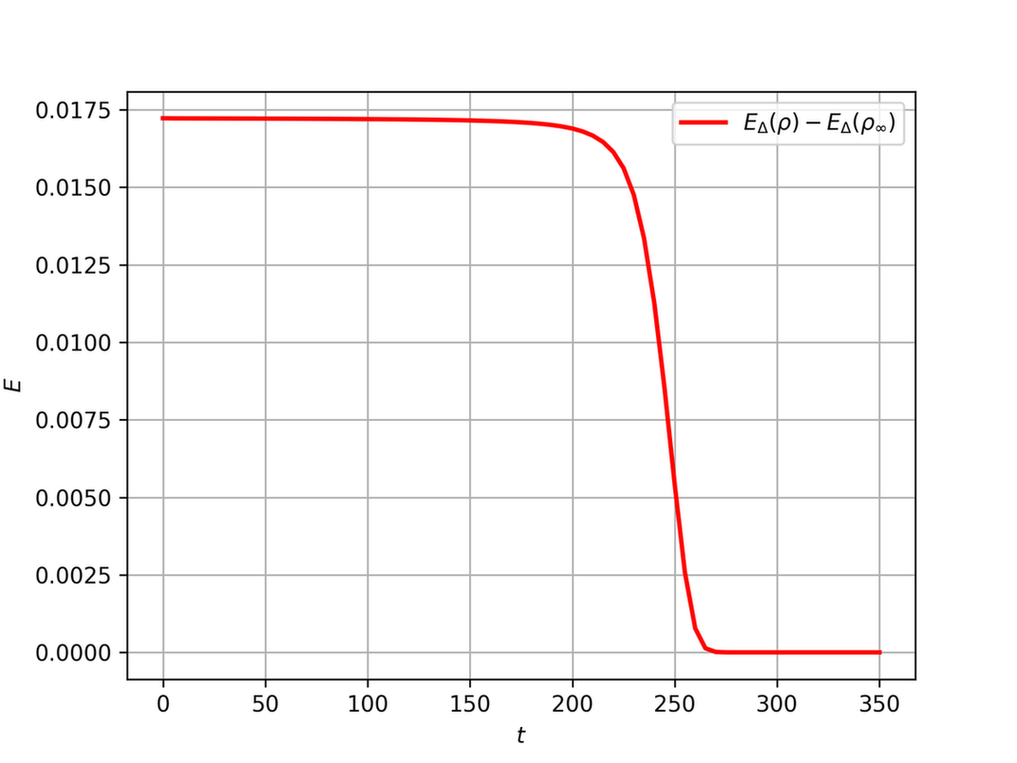}
		\caption{Dissipation of the discrete energy.}
	\end{subfigure}\caption{Two-aggregate solution of equation \eqref{eq:nonlocalnonlineardiffusion} for $D=0.1, m=3, \sigma=0.5$.}
	\label{fig:nonlocalnonlineardiffusion1D}
\end{figure}

\begin{figure}[ht]
	\centering

	\begin{subfigure}[t]{0.32\textwidth}
		\centering
		\includegraphics[trim={2cm 1cm 1.25cm 2cm},clip, width=\textwidth]{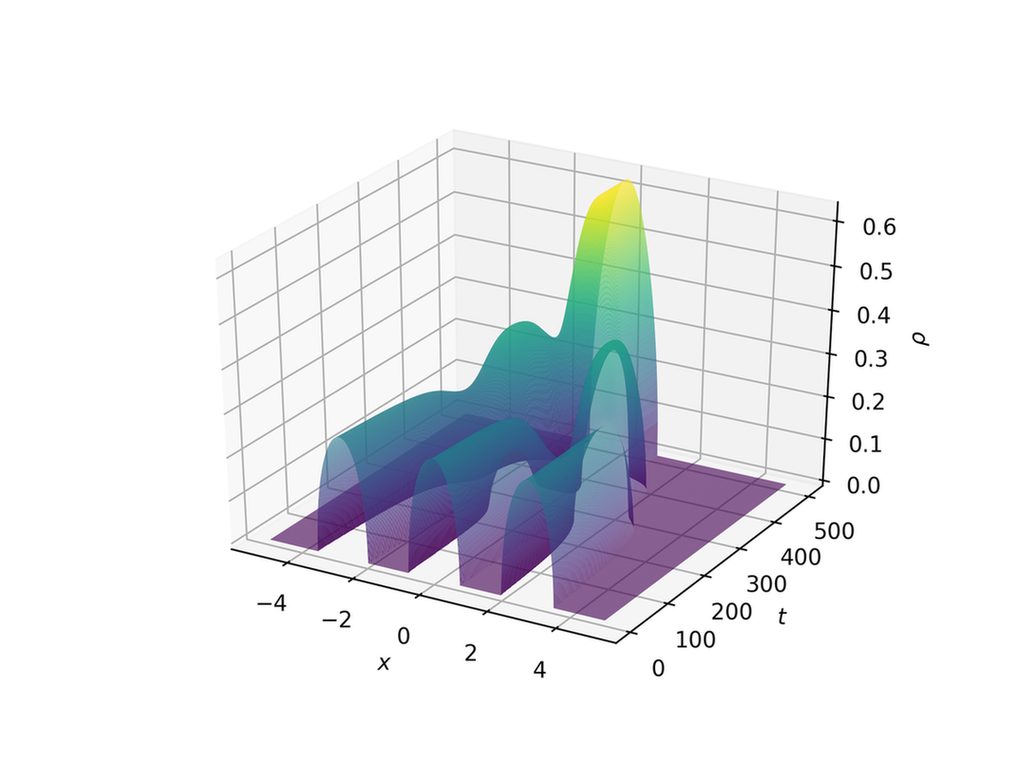}
		\caption{Convergence to $\rho_{\infty}$ in time.}
	\end{subfigure}~
	\begin{subfigure}[t]{0.32\textwidth}
		\centering
		\includegraphics[width=\textwidth]{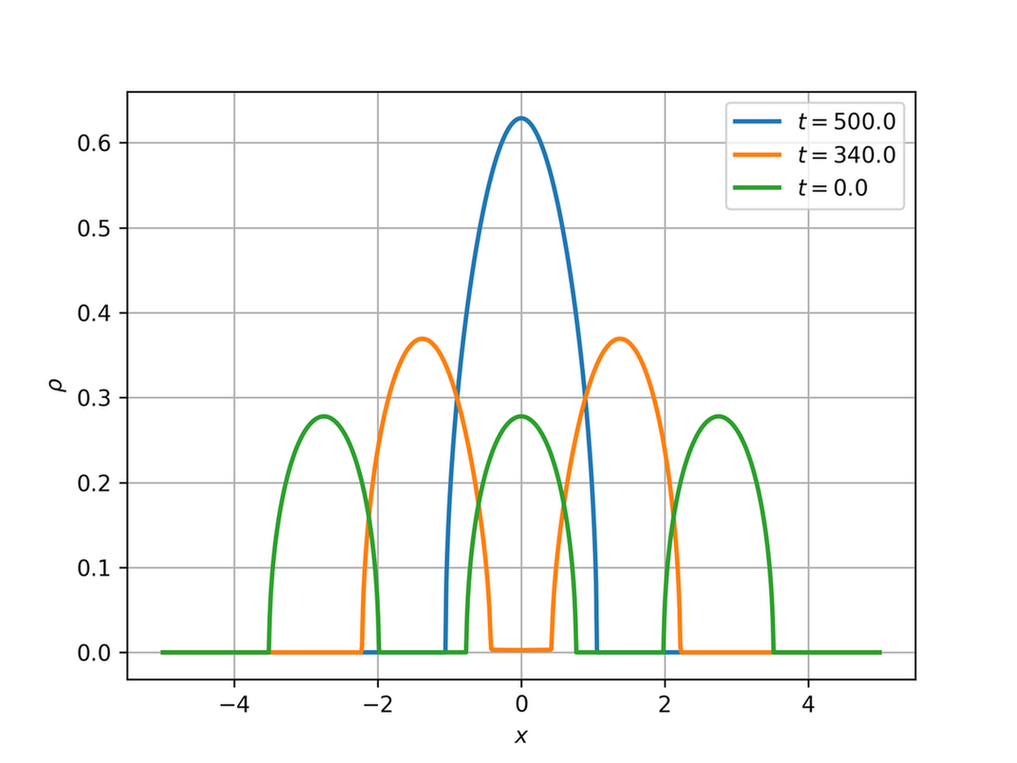}
		\caption{Intermediate and stationary states.}
	\end{subfigure}~
	\begin{subfigure}[t]{0.32\textwidth}
		\centering
		\includegraphics[width=\textwidth]{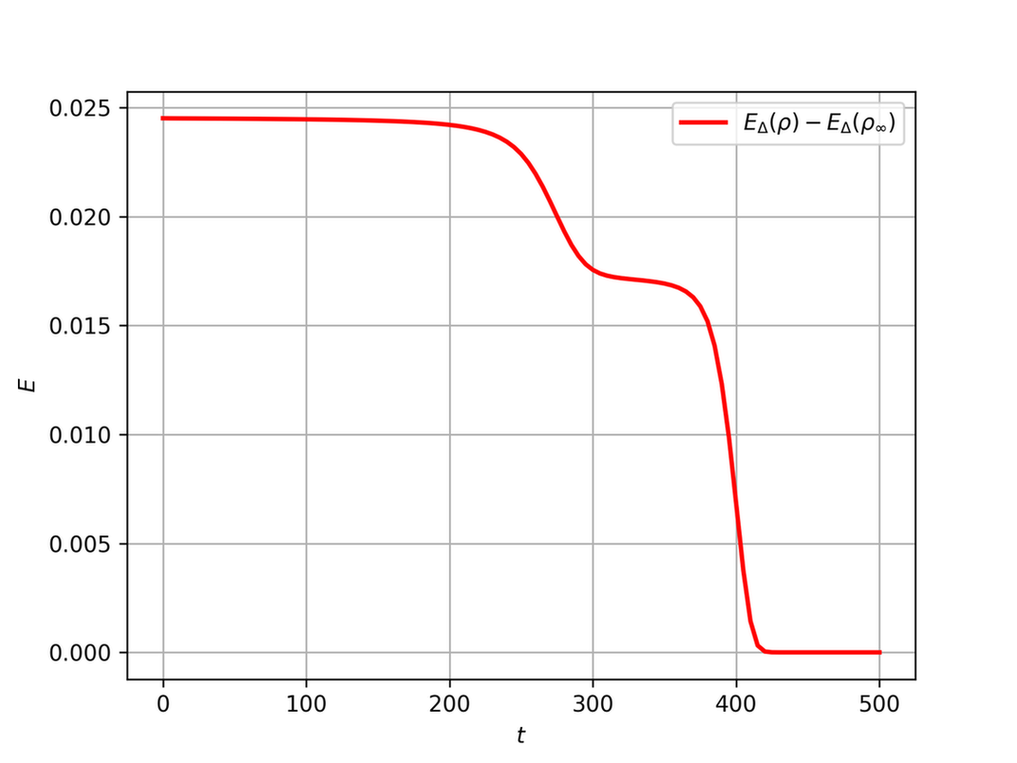}
		\caption{Dissipation of the discrete energy.}
	\end{subfigure}\caption{Three-then-two-aggregate solution of equation \eqref{eq:nonlocalnonlineardiffusion} for $D=0.1, m=3, \sigma=0.5$.}
	\label{fig:nonlocalnonlineardiffusion1Dthree}
\end{figure}
 
\begin{figure}[ht!]
	\centering

	\begin{subfigure}[t]{0.49\textwidth}
		\centering
		\includegraphics[trim={2cm 1cm 1.25cm 2cm},clip, width=\textwidth]{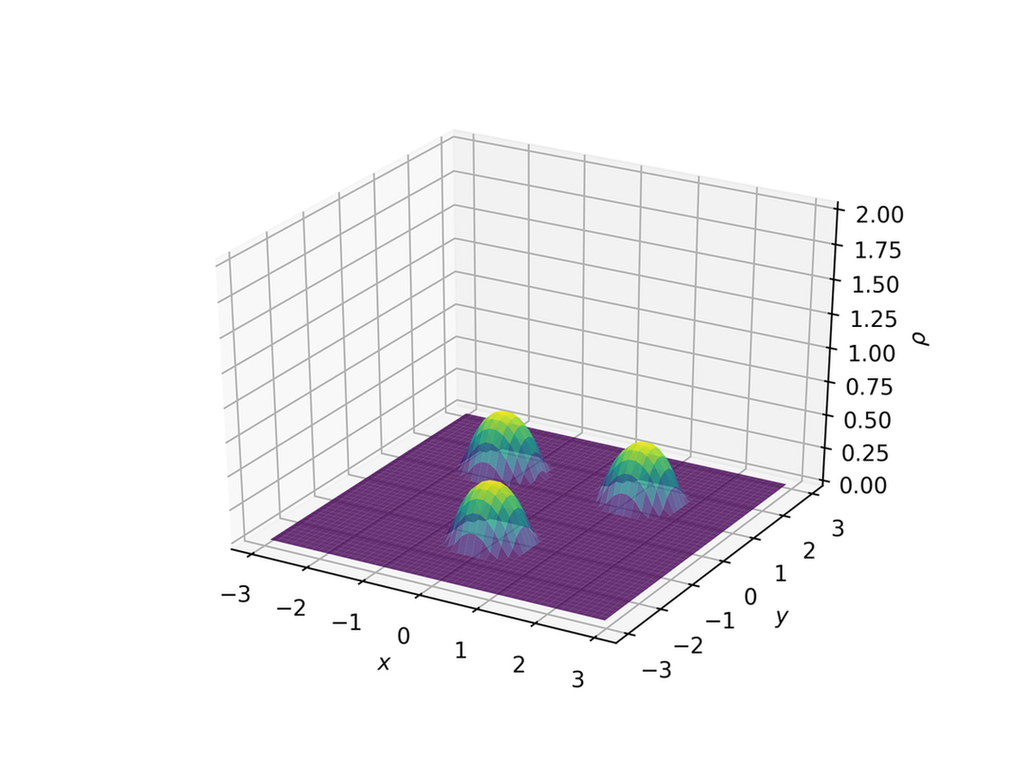}
		\caption{$t=0$.}
	\end{subfigure}~
	\begin{subfigure}[t]{0.49\textwidth}
		\centering
		\includegraphics[trim={2cm 1cm 1.25cm 2cm},clip, width=\textwidth]{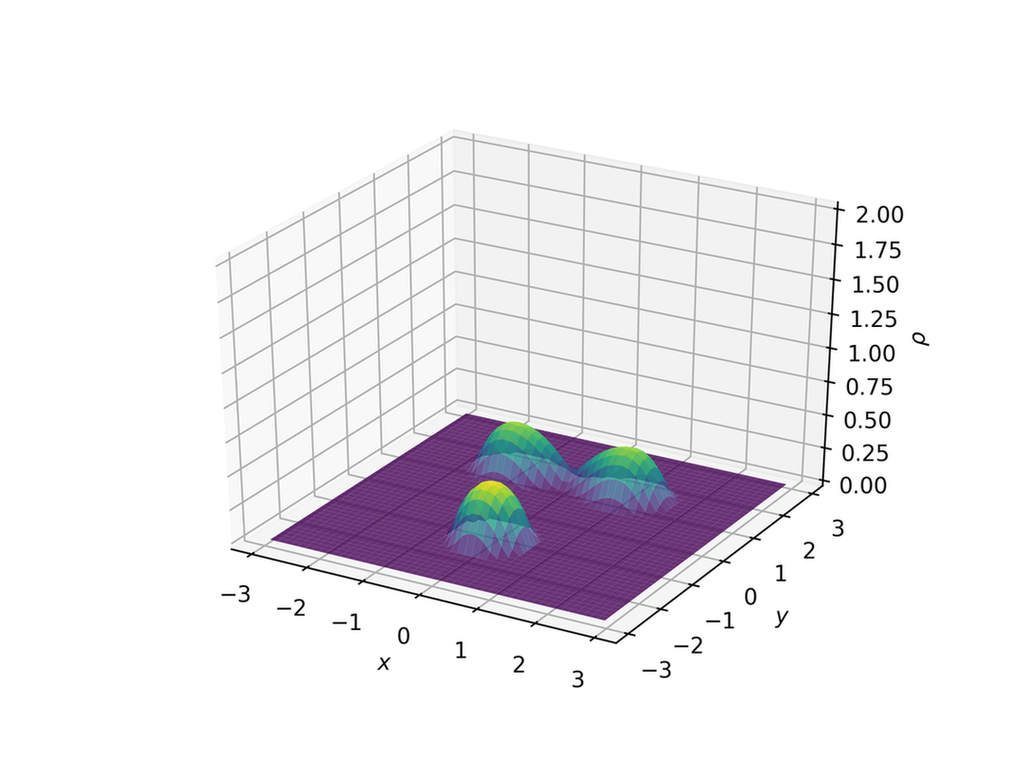}
		\caption{$t=150$.}
	\end{subfigure}

	\begin{subfigure}[t]{0.49\textwidth}
		\centering
		\includegraphics[trim={2cm 1cm 1.25cm 2cm},clip, width=\textwidth]{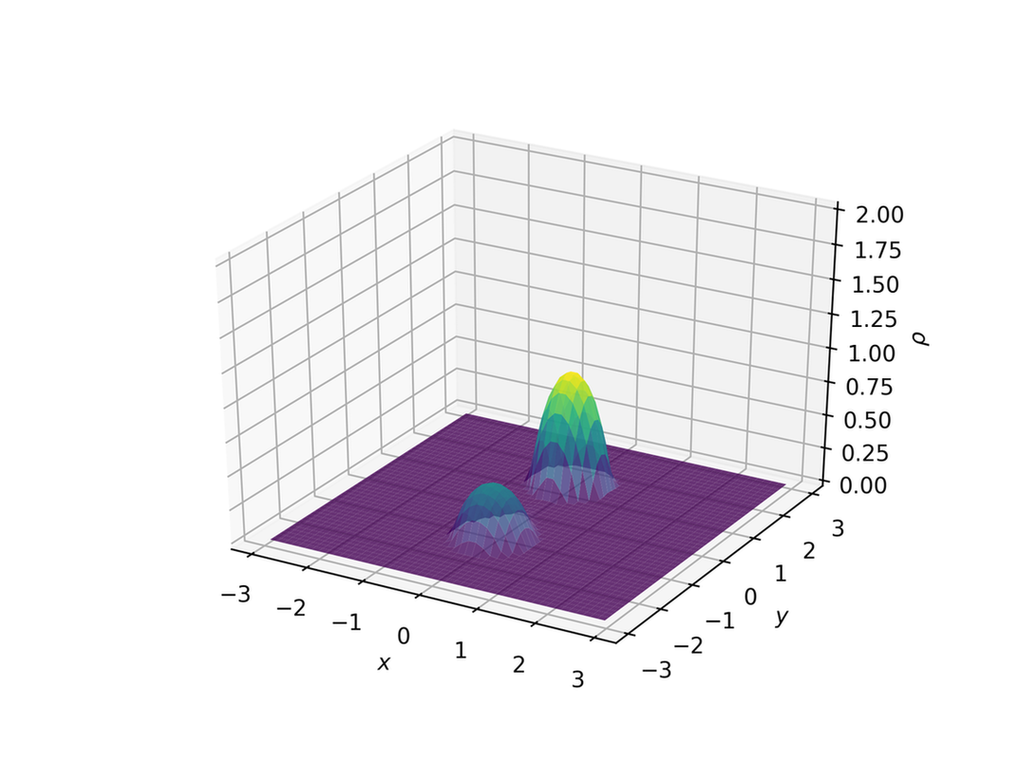}
		\caption{$t=230$.}
	\end{subfigure}~
	\begin{subfigure}[t]{0.49\textwidth}
		\centering
		\includegraphics[trim={2cm 1cm 1.25cm 2cm},clip, width=\textwidth]{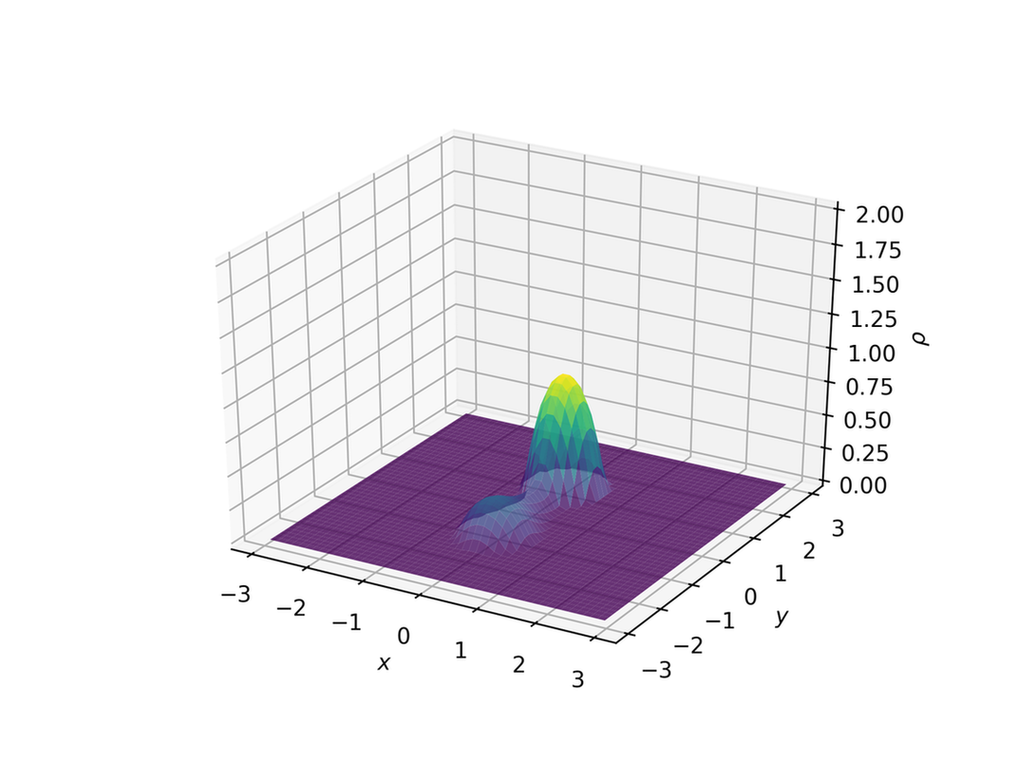}
		\caption{$t=275$.}
	\end{subfigure}

	\begin{subfigure}[t]{0.49\textwidth}
		\centering
		\includegraphics[trim={2cm 1cm 1.25cm 2cm},clip, width=\textwidth]{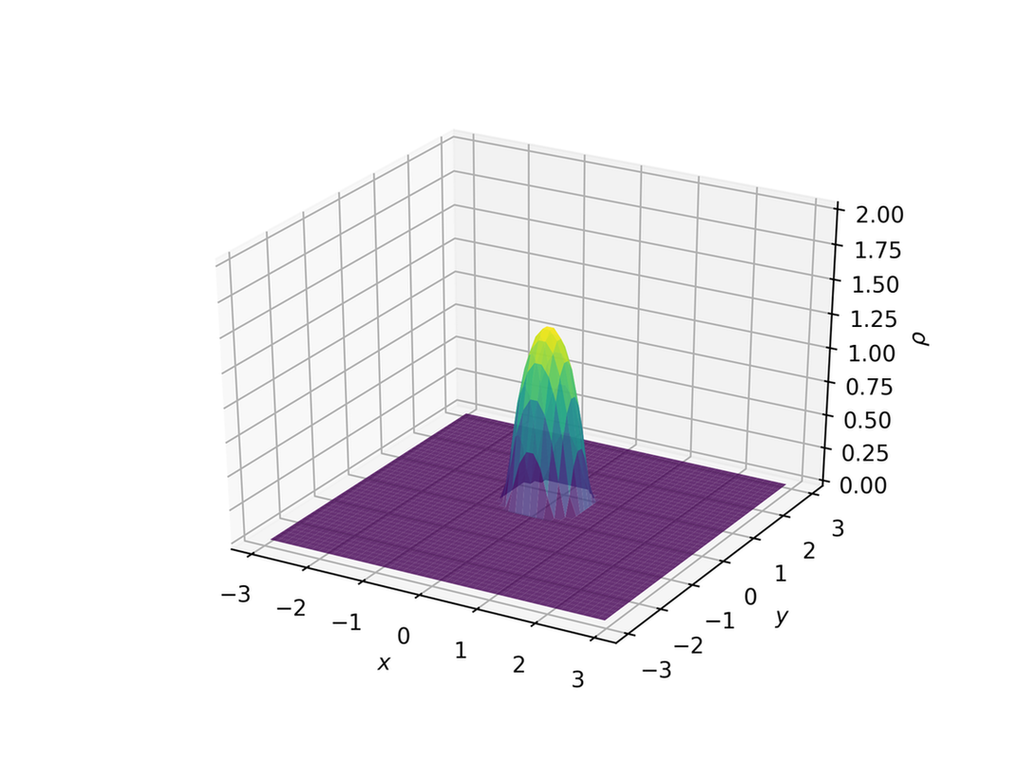}
		\caption{$t=500$.}
	\end{subfigure}~
	\begin{subfigure}[t]{0.49\textwidth}
		\centering
		\includegraphics[width=\textwidth]{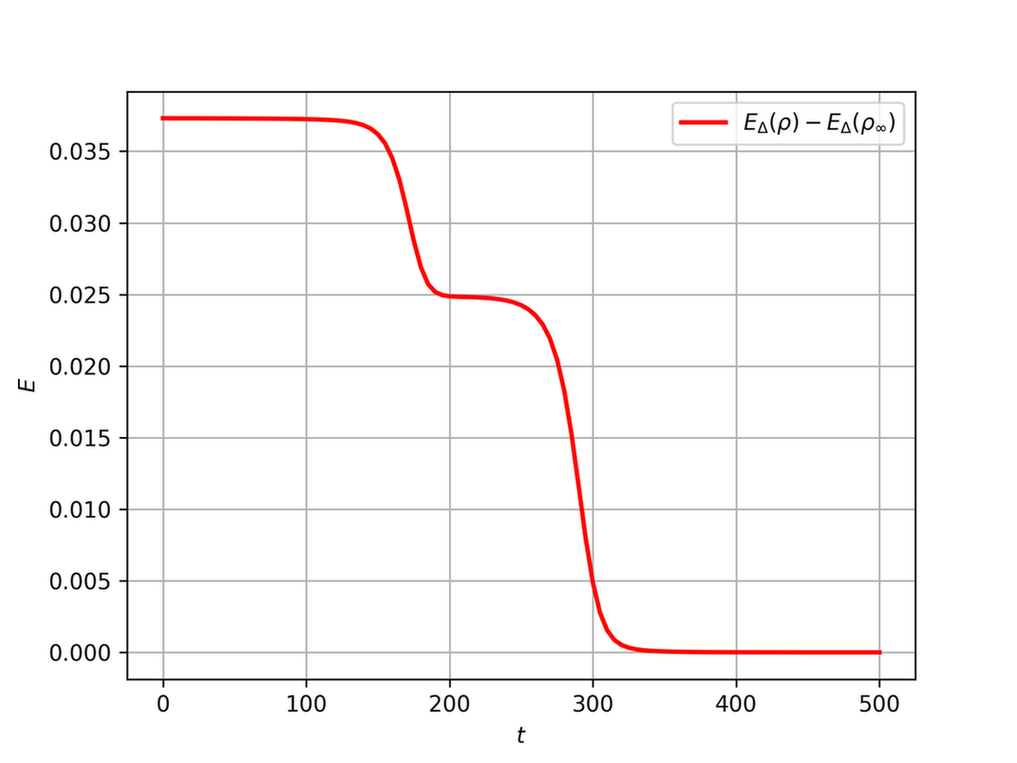}
		\caption{Dissipation of the discrete energy.}
	\end{subfigure}

	\caption{Three-then-two-aggregate solution of equation \eqref{eq:nonlocalnonlineardiffusion} for $D=0.01, m=2, \sigma=0.5$ in two dimensions.}
	\label{fig:nonlocalnonlineardiffusion2D}
\end{figure}

\subsection{Homogeneous Noisy Kinetic Flocking}

For the last example we will discuss a kinetic model for the velocity of self-propelled agents with a noisy tendency to flock:
\begin{equation}\label{eq:kineticflocking}
	H(\rho)=\sigma\prt{\rho\log\prt{\rho}-\rho},\quad
	V(\bx)=\alpha\prt*{\frac{\abs{\bx}^4}{4}-\frac{\abs{\bx}^2}{2}},\quad
	W(\bx)=\frac{\abs{\bx}^2}{2},
\end{equation}
for $\sigma\geq0, \alpha\geq 0$. For the sake of simplicity we retain the notation $\bx$ even though the equation concerns velocities. The confinement potential represents the preference of the agents to move with speed one. The interaction kernel models the alignment tendency, and the diffusion component accounts for the noise in the system.

This model was studied at length in \cite{Tugaut2014, B.C.C+2016, C.G.P+2020}. Among other things, the authors prove the existence of a phase transition in the system. Low values of $\sigma$ allow asymmetric initial conditions to flock, resulting in polarised steady states; the equation admits a symmetric steady state which is unstable and only realised for symmetric initial datum. Increasing the parameter beyond a critical threshold plunges the system into isotropic symmetry regardless of the initial condition.

The S2 scheme allowed us to solve the steady state problem of \eqref{eq:kineticflocking} for a large range of values of $\sigma$. The first moment of the steady state $\rho_{\infty}$,
\begin{equation}
	\langle \bx \rangle
	=\int \bx \rho_{\infty} d\bx,
\end{equation}
can be studied as a function of the noise strength $\sigma$, revealing whether the system is polarised or not. A sharp transition from the asymmetric polarised steady states to the isotropic setting can be seen on Figure \ref{fig:kinetic} for the one-dimensional case. The centre of mass of the initial datum was shifted along the positive axis, resulting in the polarisation in that direction. By symmetry there is always another polarised steady state in the opposite direction.

\begin{figure}[ht]
	\centering
	\begin{subfigure}[t]{0.49\textwidth}
		\centering
		\includegraphics[width=\textwidth]{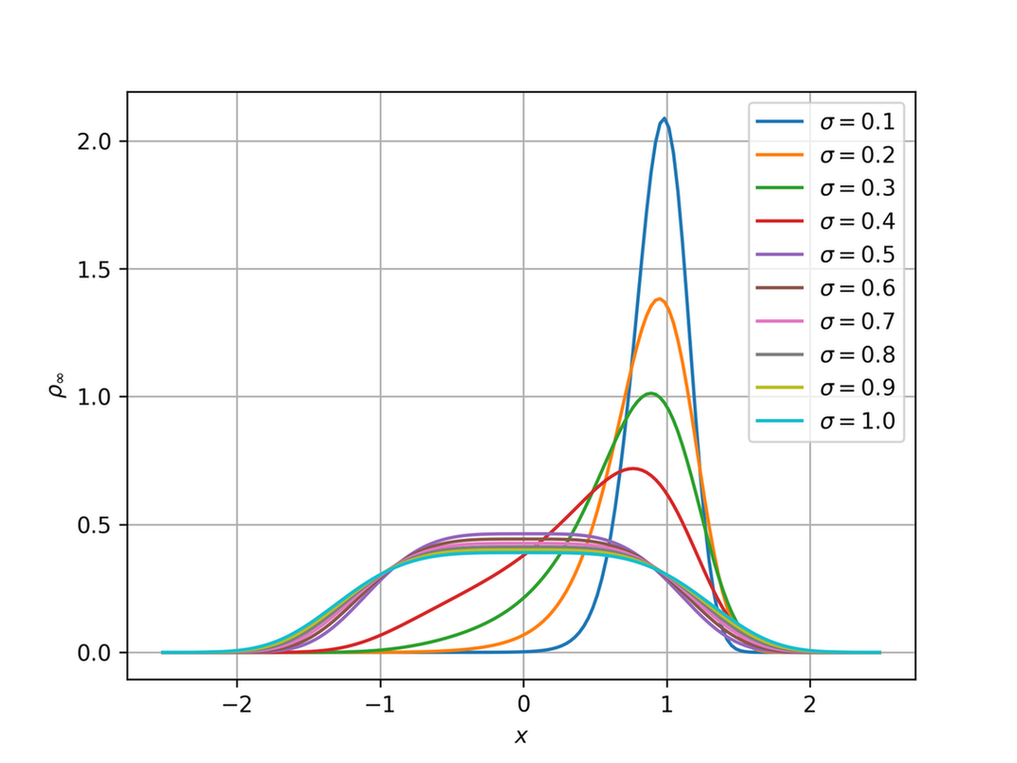}
		\caption{Stationary state $\rho_{\infty}(x)$ for different values of $\sigma$.}
	\end{subfigure}~
	\begin{subfigure}[t]{0.49\textwidth}
		\centering
		\includegraphics[width=\textwidth]{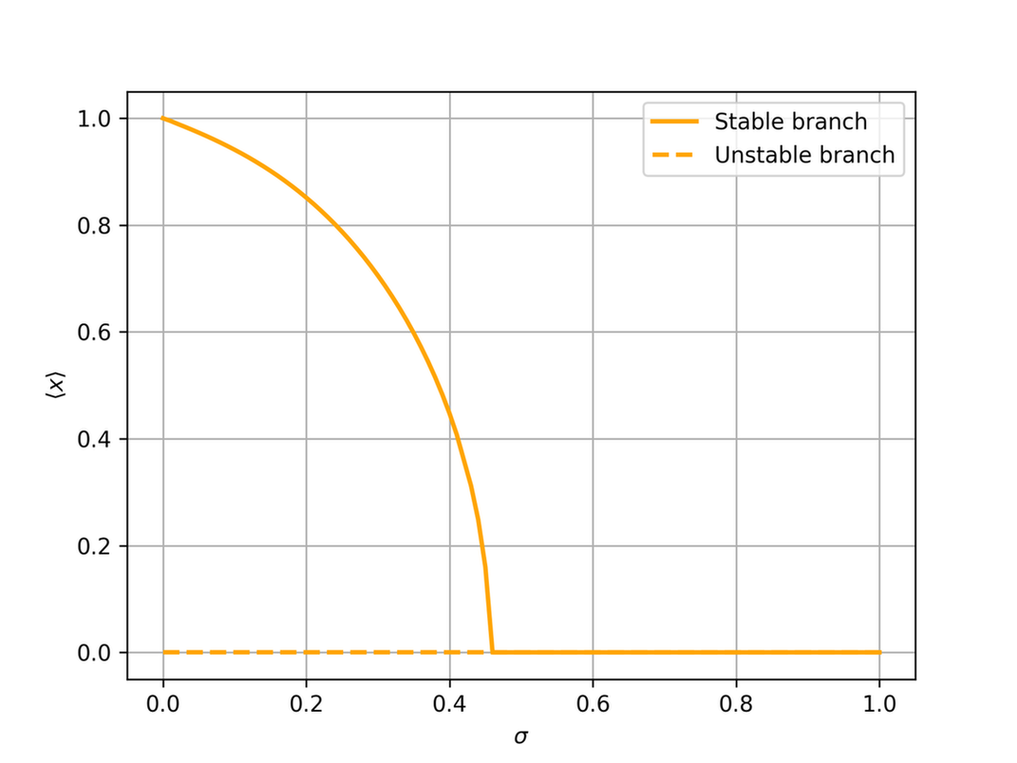}
		\caption{Bifurcation diagram.}
	\end{subfigure}\caption{Stationary states and phase transition of \eqref{eq:kineticflocking} for $\alpha=1$ in one dimension.}
	\label{fig:kinetic}
\end{figure}
 
The same phenomenon is observed in the two-dimensional setting, see Figure \ref{fig:kinetic2D} and \cite{B.C.C+2016} for the analysis. The initial datum was shifted along the positive $x$ axis, resulting in the corresponding polarisation. Note that there is a rotationally symmetric family of polarised steady states. These states resemble a von Mises--Fisher distribution obtained for the Vicsek model ($\alpha=\infty$), see \cite{D.F.L2015}.

\begin{figure}[ht]
	\centering
	\begin{subfigure}[t]{1.0\textwidth}
		\begin{subfigure}[t]{0.49\textwidth}
			\centering
			\includegraphics[trim={2cm 1cm 1.25cm 2cm},clip, width=\textwidth]{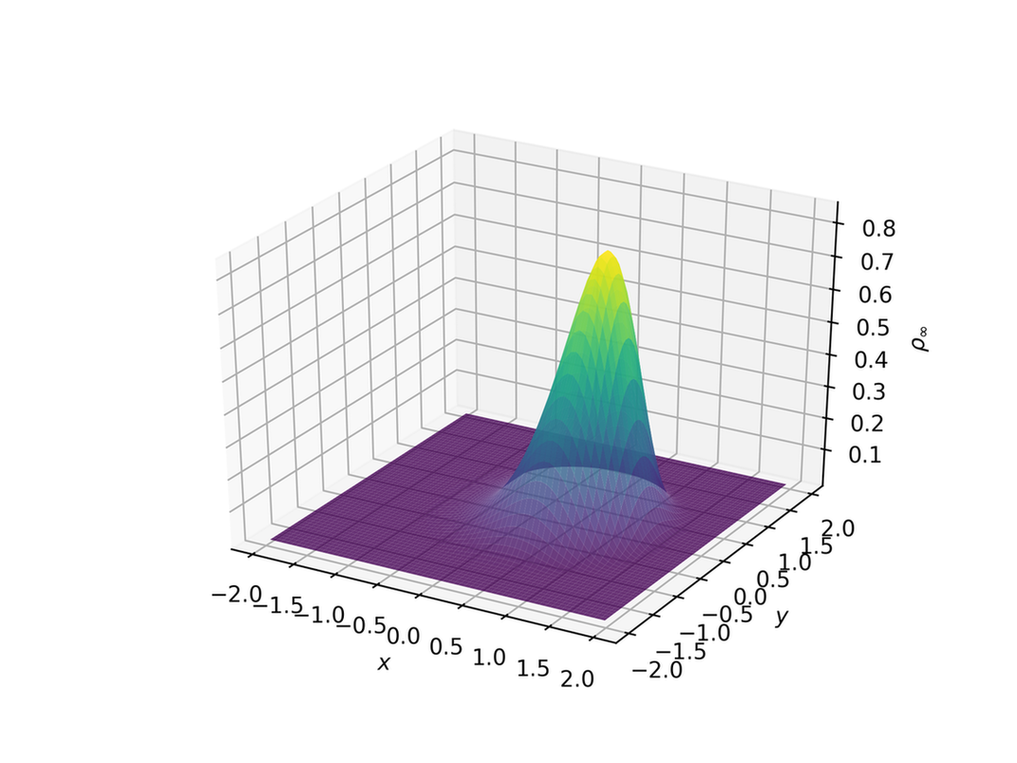}
		\end{subfigure}~
		\begin{subfigure}[t]{0.49\textwidth}
			\centering
			\includegraphics[width=\textwidth]{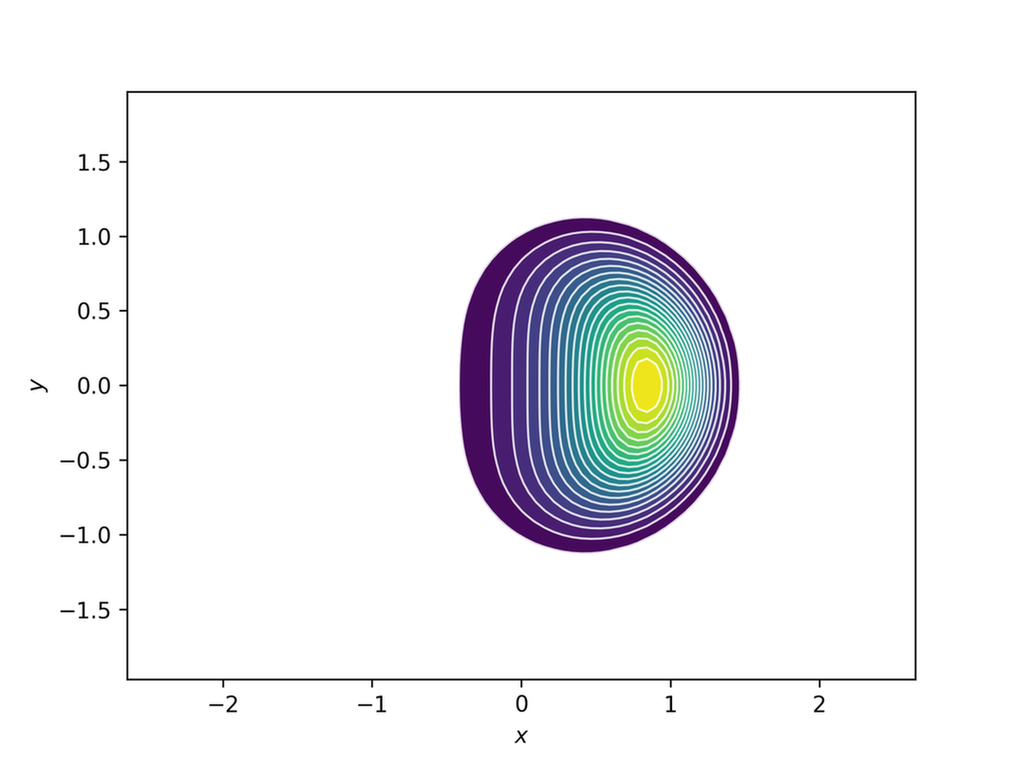}
		\end{subfigure}\caption{Stationary state $\rho_{\infty}(x)$.}
	\end{subfigure}

	\begin{subfigure}[t]{0.5\textwidth}
		\centering
		\includegraphics[width=\textwidth]{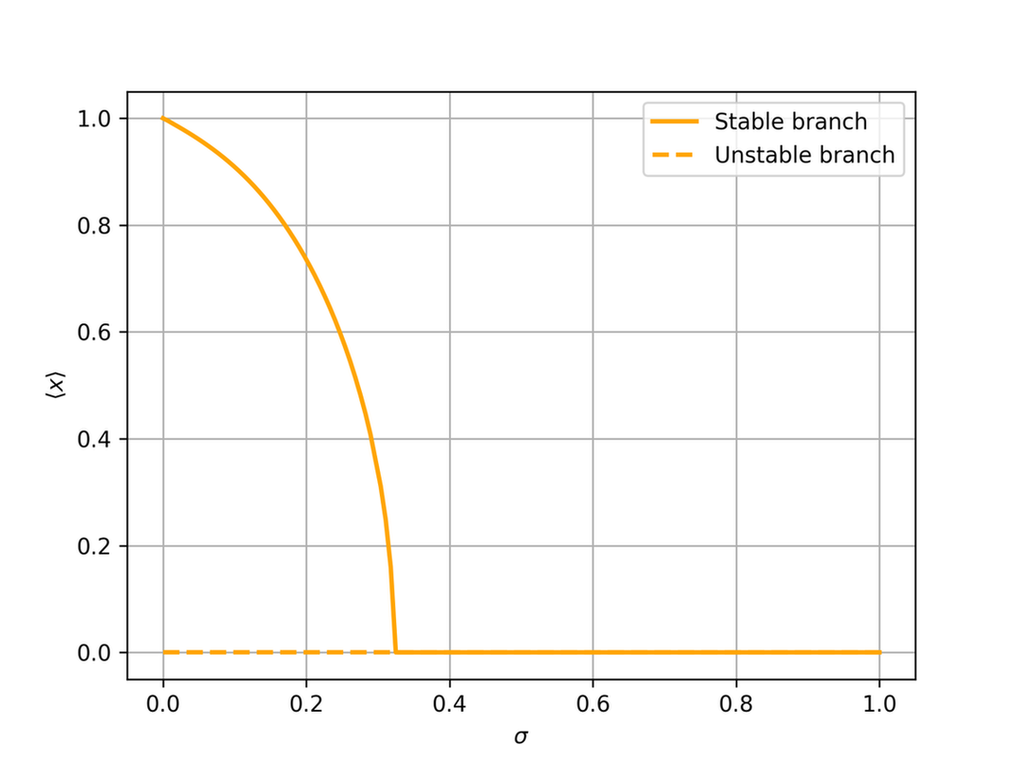}
		\caption{Bifurcation diagram.}
	\end{subfigure}\caption{Stationary states and phase transition of \eqref{eq:kineticflocking} for $\alpha=1$ in two dimensions.}
	\label{fig:kinetic2D}
\end{figure}
 
Finally, we discuss the phase transition for the non-linear diffusion case with and without a linear diffusion regularisation. This corresponds to:
\begin{equation}\label{eq:kineticflockingPME}
	H_\varepsilon(\rho)=\sigma\prt*{
		\frac{\rho^m}{m-1}+
		\varepsilon\prt{\rho\log(\rho)-\rho}
	},\quad
	V(\bx)=\alpha\prt*{\frac{\abs{\bx}^4}{4}-\frac{\abs{\bx}^2}{2}},\quad
	W(\bx)=\frac{\abs{\bx}^2}{2},
\end{equation}
for $\epsilon\geq0, \sigma\geq0, m>1, \alpha\geq0$. Figure \ref{fig:kineticPME} shows the stationary states without regularisation as well as the bifurcation diagrams for $\varepsilon=0$ and $\varepsilon>0$. The case shown, $m=2$, leads to compactly supported stationary states with Lipschitz regularity at the boundary of the support, see Figure \ref{fig:kineticPME} (a). The regularisation numerically compensates the loss of spatial accuracy of the scheme due to the lack of smoothness of the solution, requiring fewer mesh points to adequately capture the behaviour around the critical point. Numerically we observe that the bifurcation diagram is continuous with respect to the regularisation parameter $\varepsilon$.

\begin{figure}[ht]
	\centering
	\begin{subfigure}[t]{0.5\textwidth}
		\centering
		\includegraphics[width=\textwidth]{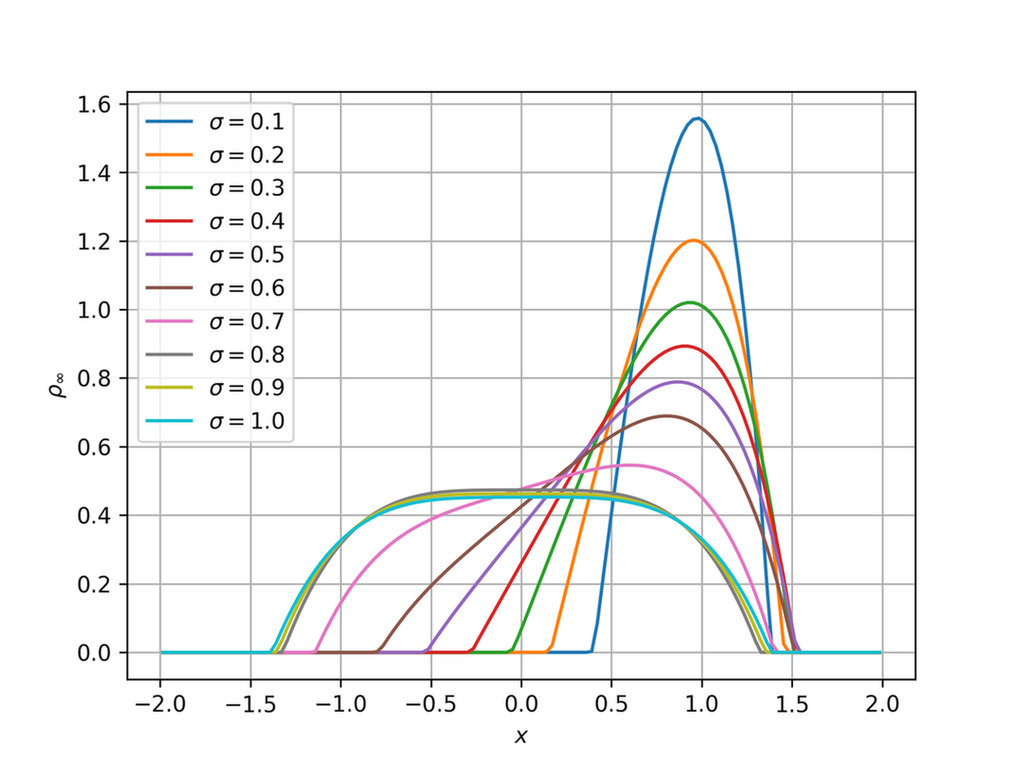}
		\caption{Stationary states $\rho_{\infty}$ for $H_0\prt{\rho}$ and varying $\sigma$.}
		\label{fig:kineticPME_A}
	\end{subfigure}

	\begin{subfigure}[t]{0.49\textwidth}
		\centering
		\includegraphics[width=\textwidth]{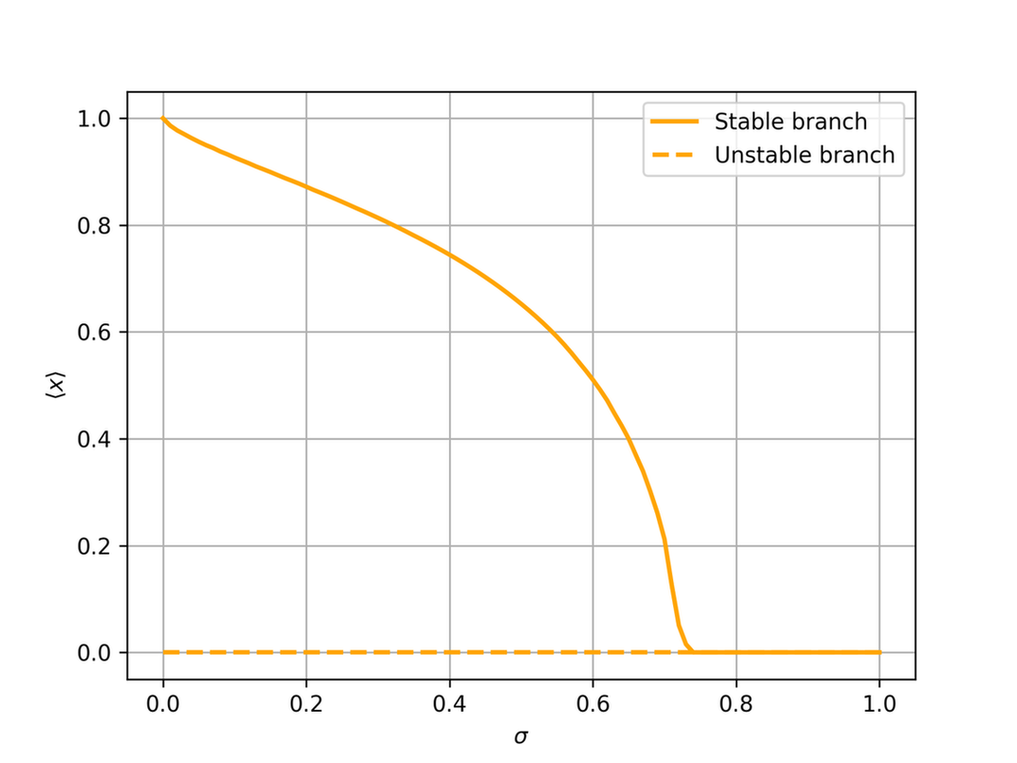}
		\caption{Bifurcation diagram for $H_0\prt{\rho}$.}
	\end{subfigure}~
	\begin{subfigure}[t]{0.49\textwidth}
		\centering
		\includegraphics[width=\textwidth]{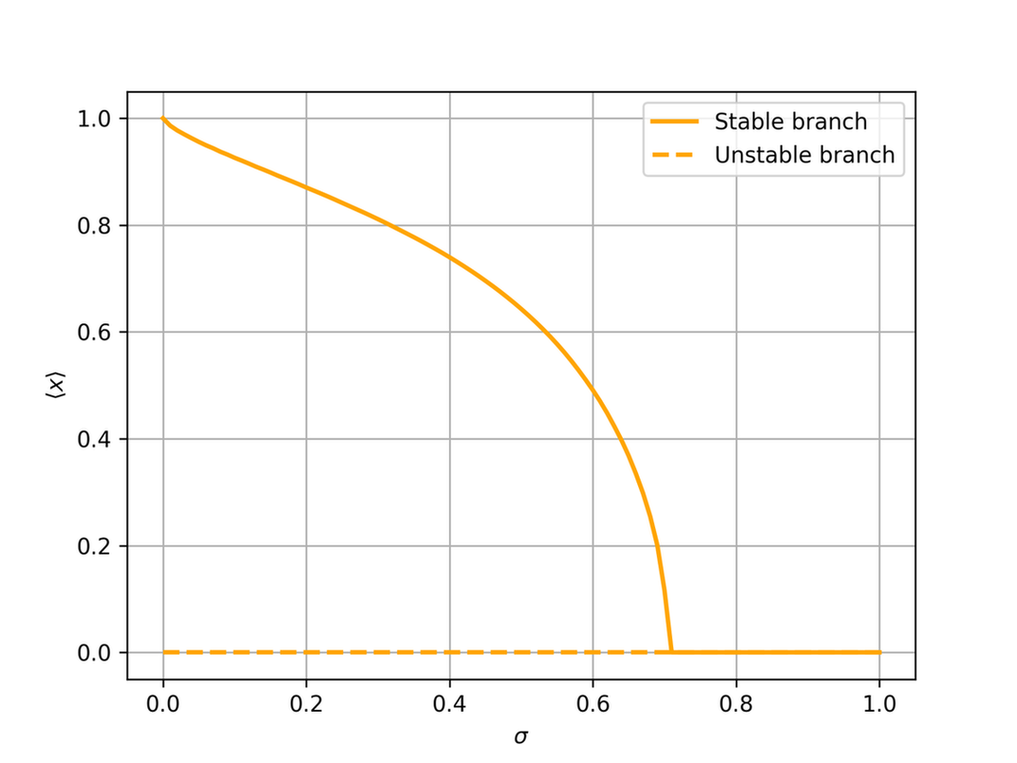}
		\caption{Bifurcation diagram for $H_{\varepsilon}\prt{\rho}$, with $\varepsilon=0.01$.}
	\end{subfigure}

	\caption{Stationary states and phase transition of \eqref{eq:kineticflockingPME} for $\alpha=1, m=2$ in one dimension.}
	\label{fig:kineticPME}
\end{figure}
  
\section*{Acknowledgements}

JAC was partially supported by the EPSRC grant number EP/P031587/1. JH was supported by NSF grant DMS-1620250 and NSF CAREER grant DMS-1654152. The authors are grateful to the Mittag-Leffler Institute for providing a fruitful working environment during the special semester \emph{Mathematical Biology} where this work was completed. JAC and JH thank the University of Texas at Austin for their invitation in September 2017 where this project started.
 \bibliographystyle{abbrv}
\bibliography{./main.bib}

\end{document}